\documentclass[11pt,reqno]{amsart}
\usepackage{graphicx,enumerate,amssymb,bbold}
\usepackage{sidecap}

\usepackage{floatrow}
\usepackage[notrig]{physics}
\usepackage[font=footnotesize,labelfont=small]{subcaption}
\usepackage[linesnumbered,ruled,vlined]{algorithm2e}
\usepackage[left=2.5cm,right=2.5cm,top=2.5cm,bottom=2.5cm,includeheadfoot]{geometry}
\usepackage[thinlines]{easytable}

\usepackage[export]{adjustbox}
\usepackage{scalerel}
\usepackage{xparse}
\usepackage{amsmath}
\usepackage{environ}
\usepackage[titletoc]{appendix}
\usepackage{xcolor}
\newcommand\crule[3][black]{\textcolor{#1}{\rule{#2}{#3}}}

\ExplSyntaxOn
\NewEnviron{bmatrixT}
 {
  \marine_transpose:V \BODY
 }

\int_new:N \l_marine_transpose_row_int
\int_new:N \l_marine_transpose_col_int
\seq_new:N \l_marine_transpose_rows_seq
\seq_new:N \l_marine_transpose_arow_seq
\prop_new:N \l_marine_transpose_matrix_prop
\tl_new:N \l_marine_transpose_last_tl
\tl_new:N \l_marine_transpose_body_tl

\cs_new_protected:Nn \marine_transpose:n
 {
  \seq_set_split:Nnn \l_marine_transpose_rows_seq { \\ } { #1 }
  \int_zero:N \l_marine_transpose_row_int
  \prop_clear:N \l_marine_transpose_matrix_prop
  \seq_map_inline:Nn \l_marine_transpose_rows_seq
   {
    \int_incr:N \l_marine_transpose_row_int
    \int_zero:N \l_marine_transpose_col_int
    \seq_set_split:Nnn \l_marine_transpose_arow_seq { & } { ##1 }
    \seq_map_inline:Nn \l_marine_transpose_arow_seq
     {
      \int_incr:N \l_marine_transpose_col_int
      \prop_put:Nxn \l_marine_transpose_matrix_prop
       {
        \int_to_arabic:n { \l_marine_transpose_row_int }
        ,
        \int_to_arabic:n { \l_marine_transpose_col_int }
       }
       { ####1 }
     }
   }
   \tl_clear:N \l_marine_transpose_body_tl
   \int_step_inline:nnnn { 1 } { 1 } { \l_marine_transpose_col_int }
    {
     \int_step_inline:nnnn { 1 } { 1 } { \l_marine_transpose_row_int }
      {
       \tl_put_right:Nx \l_marine_transpose_body_tl
        {
         \prop_item:Nn \l_marine_transpose_matrix_prop { ####1,##1 }
         \int_compare:nF { ####1 = \l_marine_transpose_row_int } { & }
        }
      }
     \tl_put_right:Nn \l_marine_transpose_body_tl { \\ }
    }
   \begin{bmatrix}
   \l_marine_transpose_body_tl
   \end{bmatrix}
 }
\cs_generate_variant:Nn \marine_transpose:n { V }
\cs_generate_variant:Nn \prop_put:Nnn { Nx }
\ExplSyntaxOff

\usepackage{tikz,tikz-cd}
\usetikzlibrary{calc}

\newcommand{\DrawBoxFwd}[1][]{%
    \tikz[overlay,remember picture]{
\fill[rectangle,rounded corners,fill=red!15,draw, fill opacity=0.5,thick,inner sep=0pt]
      ($(left)+(-0.8em,1.7em)$) rectangle
      ($(right)+(0.2em,-1.2em)$);}
}
\newcommand{\DrawBoxAdj}[1][]{%
    \tikz[overlay,remember picture]{
\fill[rectangle,rounded corners,fill=blue!15,draw, fill opacity=0.5,thick,inner sep=0pt]
      ($(left)+(-0.3em,1.7em)$) rectangle
      ($(right)+(0.2em,-1.2em)$);}
}

\setlength{\fboxsep}{0pt}
\usetikzlibrary{positioning}
\tikzstyle{mybox} = [draw=blue!50, fill=blue!20, very thick,
    rectangle, rounded corners, inner sep=10pt, inner ysep=15pt]
\tikzstyle{fancytitle} =[fill=gray!20, rounded corners, text=black]
\usepackage{tabularx}
\newcolumntype{?}{!{\vrule width 1pt}}
\usepackage{tabulary}
\usepackage{multirow}
\usepackage[bookmarks=false, colorlinks=true, pdfstartview=FitV, linkcolor=blue,
            citecolor=blue, urlcolor=blue]{hyperref}
\usepackage{makecell}
\usepackage[makeroom]{cancel}

\numberwithin{equation}{section}
\numberwithin{figure}{section}
\setcounter{tocdepth}{3}

\theoremstyle{plain}
\newtheorem{theorem}{Theorem}[section]
\newtheorem{lemma}[theorem]{Lemma}
\newtheorem{proposition}[theorem]{Proposition}

\theoremstyle{definition}

\newtheorem{remark}{Remark}[section]


\newcommand{\bitem}{\begin{itemize}}
\newcommand{\eitem}{\end{itemize}}
\newcommand{\mc}[1]{\mathcal{#1}}

\newcommand{\N}{\mathbb{N}}
\newcommand{\R}{\mathbb{R}}

\newcommand{\bpm}{\begin{pmatrix}}
\newcommand{\epm}{\end{pmatrix}}
\newcommand{\bsm}{\left(\begin{smallmatrix}}
\newcommand{\esm}{\end{smallmatrix}\right)}
\newcommand{\T}{\top}

\newcommand{\ol}[1]{\overline{#1}}
\newcommand{\la}{\langle}
\newcommand{\ra}{\rangle}

\newcommand{\gdw}{\Leftrightarrow}

\newcommand{\eins}{\mathbb{1}}

\DeclareMathOperator{\Diag}{Diag}

\DeclareMathOperator{\ggrad}{grad}

\DeclareMathOperator{\Exp}{Exp}

\DeclareMathOperator{\KL}{KL}

\newcommand{\BS}[1]{{\eins_{\mc{S}_{#1}}}}
\newcommand{\BW}{{\eins_{\mc{W}}}}
\newcommand{\BP}{{\eins_{\mc{P}}}}
\newcommand{\PT}{{\Pi}}
\newcommand{\RO}{{R}}
\newcommand{\Egrad}{\partial}
\newcommand{\Rgrad}{\ggrad}

\title[Learning Adaptive Regularization Using Geometric Assignment]{Learning Adaptive Regularization for Image Labeling\\ Using Geometric Assignment}

\author[R.~H\"{u}hnerbein, F.~Savarino, S.~Petra, C.~Schn\"{o}rr]{Ruben H\"{u}hnerbein, Fabrizio Savarino, Stefania Petra, Christoph Schn\"{o}rr}

\address[R.~H\"{u}hnerbein, F.~Savarino]{Image and Pattern Analysis Group, Heidelberg University, Germany}
\email{\{ruben.huehnerbein,fabrizio.savarino\}@iwr.uni-heidelberg.de}

\address[S.~Petra]{Mathematical Imaging Group, Heidelberg University, Germany}
\email{petra@math.uni-heidelberg.de}
\urladdr{\url{https://www.stpetra.com/}}

\address[C.~Schn\"{o}rr]{Image and Pattern Analysis Group, Heidelberg University, Germany}
\email{schnoerr@math.uni-heidelberg.de}
\urladdr{\url{http://ipa.math.uni-heidelberg.de}}

\subjclass[2010]{}

\begin{document}

\begin{abstract}
We study the inverse problem of model parameter learning for pixelwise image labeling, using the linear assignment flow and training data with ground truth. This is accomplished by a Riemannian gradient flow on the manifold of parameters that determines the regularization properties of the assignment flow. Using the symplectic partitioned Runge--Kutta method for numerical integration, it is shown that deriving the sensitivity conditions of the parameter learning problem and its discretization commute. A convenient property of our approach is that learning is based on exact inference. Carefully designed experiments demonstrate the performance of our approach, the expressiveness of the mathematical model as well as its limitations, from the viewpoint of statistical learning and optimal control.
\end{abstract}

\keywords{image labeling, assignment manifold, assignment flow, dynamical systems, replicator equation, evolutionary dynamics, sensitivity analysis,  parameter learning, adaptive regularization.}

\thanks{
Part of this research was performed while R.~H\"{u}hnerbein was visiting the Institute for Pure and Applied Mathematics (IPAM) at UCLA, which is supported by the National Science Foundation (Grant No. DMS-1440415).
Financial support by the German Science Foundation (DFG), grant GRK 1653, is gratefully acknowledged. This work has also been stimulated by the Heidelberg Excellence Cluster STRUCTURES, funded by the DFG under Germany's Excellence Strategy EXC-2181/1 - 390900948.}

\maketitle
\tableofcontents

\section{Introduction}
\label{sec:Introduction}

\subsection{Overview and Scope}

The \textit{image labeling problem}, i.e.~the problem to classify images pixelwise depending on the spatial context, has been thoroughly investigated during the last two decades {using discrete graphical models}. While the evaluation (inference) of such models is well understood \cite{Kappes:2015aa}, \textit{learning the parameters} of such models has remained elusive, in particular for models with higher connectivity of the underlying graph. Various sampling-based and other approximation methods exist (cf.~\cite{Zhu:2002aa} and references therein), but the \textit{relation} between approximations of the \textit{learning problem} on the one hand, and approximations of the subordinate \textit{inference problem} on the other hand, is less understood \cite{Wainwright:2006aa}.

In this paper, we focus on parameter learning for contextual pixelwise image labeling based on the \textit{assignment flow} introduced by \cite{Astroem2017}. In comparison with discrete graphical models, an antipodal viewpoint was adopted by \cite{Astroem2017} for the design of the assignment flow approach: Rather than performing \textit{non-smooth convex outer} relaxation and programming, followed by \textit{subsequent} rounding to integral solutions that is common when working with large-scale discrete graphical models, the assignment flow provides a \textit{smooth nonconvex interior} relaxation that performs rounding to integral solutions \textit{simultaneously}. Convergence and stability of the assignment flow has been studied in \cite{Zern:2020aa}, and extensions to unsupervised scenarios are reported in \cite{Zern:2020ab,Zisler:2019ab}. In \cite{Huhnerbein:2018aa} it was shown that the assignment flow can emulate a given discrete graphical model in terms of smoothed local Wasserstein distances, that evaluate the edge-based parameters of the graphical model. In comparison to established belief propagation iterations \cite{Yedidia:2005aa,Wainwright:2005aa}, the assignment flow driven by `Wasserstein messages' \cite{Huhnerbein:2018aa} continuously takes into account basic constraints, which enables to compute good suboptimal solutions just by numerically integrating the flow in a proper way \cite{Zeilmann:2018aa}. We refer to \cite{Schnorr:2019aa} for summarizing recent work based on the assignment flow and a discussion of further aspects.

In this paper, we ignore the connection to discrete graphical models and focus on the parameter learning problem for the assignment flow directly. This problem was raised in \cite[Section 5 and Fig.~14]{Astroem2017}. The present paper provides a detailed solution. Adopting the \textit{linear} assignment flow as introduced by \cite{Zeilmann:2018aa}, enables to cast the parameter estimation problem into the general form
\begin{subequations}\label{eq:parameter-problem-general}
\begin{align}
\min_{p \in \mc{P}}\quad \mc{C}\big( x&(T, p) \big ) \label{ivp-a}\\ \label{ivp-b}
\text{s.t.}\quad \dot x(t) &=f(x(t), p, t), \quad t \in [0,T], \\
\quad x(0) &= x_0,\label{ivp-c}
\end{align}
\end{subequations}
where the parameters $p$ determine the vector field of the linear assignment flow \eqref{ivp-b} whose unique solution is evaluated at some point of time $T$ by a suitable loss function \eqref{ivp-a}.  
This problem formulation has a range of advantages. 
\begin{itemize}
\item
\textit{Inference} (labeling) that always defines a subroutine of a learning procedure, can be carried out \textit{exactly} by means of numerically solving \eqref{ivp-b}. In other words, errors of approximate inference (e.g.~as they occur with graphical models) are absent and cannot compromise the effectiveness of parameter learning. 
\item
In addition, \textit{discretization effects} can be handled in the most convenient way: we show that the operations of (i) deriving the optimality conditions of \eqref{eq:parameter-problem-general} and (ii) problem discretization \textit{commute} if a proper numerical scheme is used. 
\end{itemize}
As a result, we obtain a well-defined and relatively simple algorithm for parameter learning that is easy to implement and enables reproducible research. We report the results of a careful numerical evaluation in order to highlight the scope of our approach and its limitations.

{We discuss in Section \ref{sec:Contribution-Organization} our specific contributions that elaborate a related conference paper \cite{Huhnerbein:2019aa} through the content of Section \ref{sec:Background} (sensitivity analysis, commutativity of diagram \ref{fig:sensitivity-diagram}, numerical schemes), Section \ref{sec:Adaptive-Learning-for-Dynamical-Systems} (parameter estimation, algorithm), Section \ref{sec:Experiments} (a range of experiments) and the Appendix (proofs).
}

\subsection{Related Work, Contribution and Organization}
\label{sec:Contribution-Organization}

The task to optimize parameters of a dynamical system \eqref{eq:parameter-problem-general} is a familiar one in the communities of scientific computing and optimal control \cite{tomovic1972general,cao_adjoint_2003}, but may be less known to the imaging community. Therefore, we provide the necessary background in Section \ref{sec:sensitivity-analysis}. 

Geometric numerical integration of ODEs on manifolds is a mature field as well \cite{Hairer:2006}. Here we have to distinguish between the integration of the assignment flow \cite{Zeilmann:2018aa} and integration schemes for numerically solving \eqref{eq:parameter-problem-general}. The task to design the latter schemes faces the `optimize-then-discretize' vs.~`discretize-then-optimize' dilemma. Conditions and ways to resolve this dilemma have been studied in the optimal control literature \cite{Hager:2000aa,Ross:2006aa}. See also the recent survey \cite{Sanz-Serna:2016aa} and references therein. We provide the corresponding background in Sections \ref{sec:Symplectiv-RK} and \ref{sec:Solving-Para-Prob} including a detailed proof of Theorem \ref{theorem:discrete-sensitivity} that is merely outlined in \cite{Sanz-Serna:2016aa}. 
The application to the linear assignment flow (Section \ref{sec:Assignment-Flow}) requires considerable work, taking into account that the state equation \eqref{ivp-b} derives from the full nonlinear geometric assignment flow (Section \ref{sec:Adaptive-Learning-for-Dynamical-Systems}). Section \ref{sec:Adaptive-Learning-for-Dynamical-Systems} concludes with specifying Algorithms \ref{alg:1} and \ref{alg:2} whose implementation realizes our approach.

From a more distant viewpoint, our work ties in with research on networks from a \textit{dynamical systems} point of view, that emanated from \cite{He:2016aa} in computer science and has also been promoted recently in mathematics \cite{E:2017aa}.  
The recent work \cite{Haber:2017aa}, for example, studied stability issues of discrete-time network dynamics using techniques of numerical ODE integration. The authors adopted the discretize-then-differentiate viewpoint on the parameter estimation problem and suggested symplectic numerical integration in order to achieve better stability. As mentioned above, our work contrasts in that inference is always \textit{exact}\footnote{{``Exact'' means that $T$ is chosen sufficiently large such that the assignment $W(T)$ is almost integral, i.e.~a labeling, according to the entropy-based criterion of \cite[Section 3.3.4]{Astroem2017}.}} during learning, unlike the more involved architecture of \cite{Haber:2017aa} where learning is based on \textit{approximate} inference. Furthermore, in our case, symplectic numerical integration is a \textit{consequence} of making the diagram of Figure \ref{fig:sensitivity-diagram} (page \pageref{fig:sensitivity-diagram}) \textit{commute}. This property   
qualifies our approach as a proper (though rudimentary) method of \textit{optimal control} (cf.~\cite{Ross:2006aa}).

{We numerically evaluate our approach in Section \ref{sec:Experiments} using three different experiments. The first experiment considers a scenario of 2 labels and images of binary letters. The results discussed in Section~\ref{sec:binary-letters} 
illustrate the \textit{adaptivity} of regularization by using \textit{non-uniform} weights that are predicted for \textit{novel unseen} image data.
The second experiment uses} a class of computer-generated random images such that learning the regularization parameters is \textit{necessary} for accurately labeling each image pixelwise. It is demonstrated in Section \ref{sec:curvilinear} that, for each given ground truth labeling, the parameter estimation problem can be solved \textit{exactly}. As a consequence, the performance of the assignment flow solely depends on the prediction map, i.e.~the ability to map features extracted from \textit{novel} data to proper weights as regularization parameters, using as examples both features and \textit{optimal} parameters computed during the training phase. For statistical reasons, this task becomes feasible if the domain of the prediction map is restricted to \textit{local} contexts, in terms of features observed within local windows. We discuss consequences for future work in Section \ref{sec:Conclusion}. 
Finally, in Section \ref{sec:label-transport}, we conduct an experiment that highlights the remarkable model expressiveness of the assignment flow as well as limitations that result from learning \textit{constant} parameters. 
 
We conclude in Section \ref{sec:Conclusion}.

\subsection{Basic Notation}
For the clarity of exposition we use general mathematical notation in Section \ref{sec:Background} that should be standard, whereas specific notation related to the assignment flow is introduced in Section \ref{sec:Assignment-Flow}. 

We set $[n]=\{1,2,\dotsc,n\}$ for $n \in \N$ and $\eins_{n} = (1,1,\dotsc,1)^{\T} \in \R^{n}$. For a matrix $A \in \R^{m\times n}$, the $i$-th row vector is denoted by $A_i,\, i \in [m]$ and its transpose by $A^\T \in \R^{n\times m}$. $\la a,b \ra$ denotes the Euclidean inner product of $a, b \in \R^{n}$ and $\la A, B\ra = \sum_{i\in[n]} \la A_i, B_i\ra$ the (Frobenius) inner product between two matrices $A, B \in \R^{m\times n}$. The probability simplex is denoted by $\Delta_{n} = \{p \in \R^{n} \colon p_{i} \geq 0,\, i \in [n],\, \la \eins_{n},p\ra=1\}$. Various orthogonal projections onto a convex set are generally denoted by $\Pi$ and distinguished by a corresponding subscript, like $\Pi_{n},\Pi_{\mc{P}}, \dotsb$, etc.

{The functions $\exp, \log$ apply \textit{componentwise} to strictly positive vectors $x \in \R_{++}^{n}$, e.g.~$e^{x}=(e^{x_{1}},\dotsc,e^{x_{n}})$, and similarly for strictly positive matrices. Likewise, if $x, y \in \R_{++}^{n}$, then we simply write 
\begin{equation}
x y = (x_{1} y_{1}, \dotsc, x_{n} y_{n}),\;
\frac{x}{y} = \Big(\frac{x_{1}}{y_{1}},\dotsc,\frac{x_{n}}{y_{n}}\Big)
\end{equation}
for the \textit{componentwise} multiplication and division.}

We assume the reader to be familiar with elementary notions of Riemannian geometry as found, e.g., in \cite{Lee:2013aa,Jost:2017aa}. Specifically, given a Riemannian manifold $(\mc{M},g)$ with metric $g$, and a smooth function $f \colon \mc{M} \to \R$, the Riemannian gradient of $f$ is denoted by $\ggrad f$ and given by
\begin{equation}\label{eq:def-R-gradient}
\la \ggrad f, X \ra_{g} = df(X),\qquad\forall X
\end{equation}
where $X$ denotes any smooth vector field on $\mc{M}$, that returns the tangent vector $X_{p} \in T_{p}\mc{M}$ when evaluated at $p \in \mc{M}$. The right-hand side of \eqref{eq:def-R-gradient} denotes the differential $df$ of $f$, acting on $X$. More generally, for a map $F \colon \mc{M} \to \mc{N}$ between manifolds, we write $dF(p)[v] \in T_{F(p)}\mc{N},\, p \in \mc{M},\,v \in T_{p}\mc{M}$, if the base point $p$ matters.

In the Euclidean case $f \colon \R^{n} \to \R$, the gradient is a column vector and denoted by $\partial f$. For $F \colon \R^{n} \to \R^{m}$, we identify the differential $dF \in \R^{m \times n}$ with the Jacobian matrix. If $x = (x_1, x_2)^\T \in \R^n = \R^{n_1}\times \R^{n_2}$ with $n = n_1 + n_2$, then the Jacobian of $F(x) = F(x_1, x_2)$ with respect to the parameter vector $x_i$ is denoted by $d_{x_i}F$, for $i =1, 2$.

\section{Sensitivity Analysis for Dynamical Systems}
\label{sec:Background}

In this section, we consider the constrained optimization problem \eqref{eq:parameter-problem-general}
with a smooth objective function $\mc{C} \colon \mathbb R^{n_x} \to \R$. The constraints are given by a general \textit{initial value problem (IVP)}, which consist of a system of ordinary differential equations (ODEs) \eqref{ivp-b} that is parametrized by a vector $p \in \mc{P} \subset \R^{n_p}$, and an initial value $x_0 \in \mathbb R^{n_x}$. To ensure existence, uniqueness and continuous differentiability of the solution trajectory $x(t)$ on the whole time horizon $[0, T]$, we assume that $f(\cdot,p,\cdot)$ of \eqref{ivp-b} is Lipschitz continuous on $\mathbb R^{n_x} \times [0, T]$, for any $p$. 
 
Since we assume the initial value $x_0$ and the time horizon $[0, T]$ to be fixed, the objective function \eqref{ivp-a}
\begin{equation}\label{eq:def-Phi}
  \Phi(p) := \mc{C}(x(T, p))
\end{equation}
effectively is a function of parameter $p$, i.e. $\Phi \colon \R^{n_p} \to \R$. In order to solve \eqref{eq:parameter-problem-general} with a gradient based method, we have to compute the \textit{gradient}
\begin{equation}\label{eq:adjoint-sensitivity-column-form}
\Egrad_p \Phi(p) = d_p x(T,p)^\T \Egrad_x \mc{C}(x(T,p)).
\end{equation}
The term $d_p x(T,p)$ -- called \textit{sensitivity} -- measures the sensitivity of the the solution trajectory $x(t)$ at time $T$ with respect to changes in the parameter $p$. Two basic approaches for determining \eqref{eq:adjoint-sensitivity-column-form} are stated in Section \ref{sec:sensitivity-analysis}, and we briefly highlight why using one of them, the \textit{adjoint approach}, is advantageous for computing sensitivities. In Section \ref{sec:Symplectiv-RK}, we recall symplectic Runge-Kutta methods and conditions for preserving quadratic invariants. The latter property relates to the derivation of a class of numerical methods such that  evaluating \eqref{eq:adjoint-sensitivity-column-form}, which derives from the time-continuous problem \eqref{eq:parameter-problem-general}, is \textit{identical} to first discretizing \eqref{eq:parameter-problem-general} followed by computing the corresponding derived expression \eqref{eq:adjoint-sensitivity-column-form}. Two specific instances of the general numerical scheme are detailed in Section \ref{sec:two-schemes}.

\subsection{Sensitivity Analysis}\label{sec:sensitivity-analysis}
In this section we describe how the sensitivity $d_p x(T,p)$ can be determined by solving one of the two initial value problems defined below: the \textit{variational system} and the \textit{adjoint system}.

\begin{theorem}[{\bfseries Variational System; \cite[Ch. I.14, Thm. 14.1]{Hairer:2008aa}}] \label{theorem-variational-system}
Suppose the derivatives $d_x f$ and $d_p f$ exist and are continuous in the neighborhood of the solution $x(t)$ for $t \in [0, T]$. Then, the sensitivity with respect to the parameters
\begin{equation}\label{eq:vde-p-T}
 d_{p} x(T, p) =: \delta(T)
\end{equation}
exists, is continuous and satisfies the \textit{variational system}
\begin{subequations}\label{eq:vde-p}
\begin{align}
\dot \delta(t) &=d_x f(x(t), p, t) \delta(t) + d_p f(x(t), p, t),
\label{vde-p-a}\\\label{vde-p-b}
\delta(0) &= 0 \in \R^{n_x \times n_p},
\end{align}
\end{subequations}
with $t \in [0, T]$ and $\delta(t) \in \R^{n_x \times n_p}$. If the initial value $x(0)$ \eqref{ivp-c} depends on the parameters $p$, the initial value \eqref{vde-p-b} has to be adjusted as $\delta(0) = d_p x(0)$.
\begin{proof} 
A detailed proof can be found in \cite[Ch. I.14, Thm. 14.1]{Hairer:2008aa}. In order to make this paper self-contained, a sketch of the argument follows.

The integral representation of the solution to \eqref{ivp-b} is given by $x(t,  p) = x_0 + \int_0^t f(x(s), p, s)ds$. Differentiating with respect to $p$ and exchanging integration and differentiation by the theorem of Lebesgue yields
\begin{subequations}
\begin{align}
&d_{p} x(t, p) = d_{p} x_0 + \int_{0}^{t} d_{p} \big ( f(x(s), p, s) \big )ds\\
 \begin{split}
&=d_{p} x_0 + \int_{0}^{t} \Big ( d_x f(x(s), p, s) d_{p} x(s, p) + d_p f(x(s), p, s) \Big ) \; ds. 
\end{split}
\end{align}
\end{subequations}
Substituting $\delta(t) = d_{p} x(t, p)$, gives
\begin{equation}
  \delta(t) = \delta_0 + \int_{0}^{t} d_x f(x(s), p, s) \delta(s) + d_p f(x(s), p, s) ds,
\end{equation}
which is the integral representation of the trajectory $\delta(t)$ solving \eqref{eq:vde-p}. \qed
\end{proof}
\end{theorem}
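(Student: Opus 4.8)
The plan is to establish the three assertions — existence of $\delta(T)=d_p x(T,p)$, its continuity, and the fact that it solves the linear variational system \eqref{eq:vde-p} — in one stroke, by first constructing the candidate $\delta$ and then verifying a posteriori that it is the derivative of $x(T,\cdot)$. Note that \eqref{vde-p-a} is \emph{linear} in $\delta$, with coefficient matrix $d_x f(x(t),p,t)$ and inhomogeneity $d_p f(x(t),p,t)$ that are continuous in $t$ on $[0,T]$ (the trajectory $x(\cdot)$ is continuous and $d_xf,d_pf$ are continuous near it by hypothesis). Hence standard linear ODE theory already guarantees a unique solution $\delta(\cdot)$ defined on \emph{all} of $[0,T]$ and continuous there; the content of the theorem is the identification $\partial_{p_j} x(t,p)=\delta_j(t)$, where $\delta_j$ is the $j$-th column and $j\in[n_p]$ is fixed throughout (working one parameter component at a time is the natural reduction). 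An alternative route is to treat $p$ as an extra state with $\dot p=0$ and invoke the general smooth-dependence-on-initial-data theorem, but the direct argument below is cleaner.

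First I would pass to the integral form $x(t,p)=x_0+\int_0^t f(x(s,p),p,s)\,ds$ and form the difference quotient $q_h(t):=\tfrac1h\big(x(t,p+he_j)-x(t,p)\big)$, with $e_j$ the $j$-th standard basis vector of $\R^{n_p}$. Subtracting the integral equations and inserting intermediate terms gives
\begin{equation*}
q_h(t)=\tfrac1h\int_0^t\big(f(x(s,p+he_j),p+he_j,s)-f(x(s,p),p,s)\big)\,ds .
\end{equation*}
A first-order Taylor expansion of $f$ in its first two arguments — legitimate because $d_xf,d_pf$ are continuous near the trajectory, and using the a priori bound $\|x(s,p+he_j)-x(s,p)\|=O(h)$ uniformly on $[0,T]$, itself obtained from Gronwall applied to the Lipschitz estimate for the IVP — turns this into
\begin{equation*}
q_h(t)=\int_0^t\big(d_xf(x(s,p),p,s)\,q_h(s)+d_pf(x(s,p),p,s)e_j\big)\,ds+r_h(t),
\end{equation*}
where $\sup_{t\in[0,T]}\|r_h(t)\|\to0$ as $h\to0$. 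Comparing with the integral form of the variational equation for $\delta_j$ and applying Gronwall's inequality to $q_h-\delta_j$ yields $\sup_{t\in[0,T]}\|q_h(t)-\delta_j(t)\|\le C\sup_{[0,T]}\|r_h\|\to0$, i.e.\ $\partial_{p_j}x(t,p)=\delta_j(t)$ uniformly in $t$. Continuity of $\delta$ is then immediate from it being the solution of a linear system with continuous data (and continuity in $p$, if wanted, follows from the same Gronwall-type stability bound). If $x(0)$ depends on $p$, the only modification is that $q_h(0)=\tfrac1h\big(x_0(p+he_j)-x_0(p)\big)\to\partial_{p_j}x_0$, which propagates through the estimate to give the adjusted initial condition $\delta(0)=d_p x(0)$.

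The main obstacle is precisely the exchange of the limit $h\to0$ with the integral, i.e.\ upgrading pointwise differentiability of the flow in $p$ to a bound on $q_h-\delta_j$ that is \emph{uniform} on $[0,T]$; this is exactly what the Gronwall estimate delivers, but it requires having the a priori Lipschitz control $\|x(s,p+he_j)-x(s,p)\|=O(h)$ established \emph{before} expanding $f$, so the order of the two Gronwall arguments (first for the state difference, then for $q_h-\delta_j$) matters. Everything else — global existence and uniqueness of $\delta$ on $[0,T]$, and moving between \eqref{eq:vde-p} and its integral form — is routine. A fully detailed version of this argument is in \cite[Ch.~I.14, Thm.~14.1]{Hairer:2008aa}.
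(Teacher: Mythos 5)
Your argument is correct, and it takes a genuinely different --- and more rigorous --- route than the sketch in the paper. The paper simply differentiates the integral representation $x(t,p)=x_0+\int_0^t f(x(s),p,s)\,ds$ under the integral sign (citing Lebesgue), applies the chain rule, and reads off the integral form of \eqref{eq:vde-p}; this is quick but presupposes that $d_p x(s,p)$ already exists for all $s$, which is part of what the theorem asserts, so the sketch only \emph{identifies} the equation the sensitivity must satisfy rather than proving its existence. You instead reverse the logic: you first obtain $\delta$ as the (globally defined, continuous) solution of the linear variational system, then prove $\partial_{p_j}x(t,\cdot)=\delta_j(t)$ by showing the difference quotients $q_h$ satisfy a perturbed version of the same integral equation and controlling $q_h-\delta_j$ via Gronwall, with the preliminary Gronwall bound $\|x(s,p+he_j)-x(s,p)\|=O(h)$ justifying the Taylor expansion of $f$. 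This is essentially the full argument of the cited reference \cite[Ch.~I.14, Thm.~14.1]{Hairer:2008aa} that the paper defers to, and you correctly identify the one point where the order of estimates matters. What the paper's sketch buys is brevity and a transparent derivation of the \emph{form} of \eqref{eq:vde-p}; what your version buys is an actual proof of existence and of the uniform convergence of the difference quotients, at the cost of the two-stage Gronwall bookkeeping. The treatment of a $p$-dependent initial value is handled correctly in both.
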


For the computation of the variational system \eqref{eq:vde-p} the solution $x(t)$ is required. Since the variational system \eqref{eq:vde-p} is a matrix-valued system of dimension
 $n_x \times n_p$, the size of the system grows with the number of parameters $n_p$. For small $n_{p}$, solving the variational system is efficient. In practice, it can be simultaneously integrated numerically together with system \eqref{ivp-b}.

\begin{theorem}[\bfseries Adjoint System]\label{theorem-adjoint-system}
Suppose that the derivatives $d_x f$ and $d_p f$ exist and are continuous in the neighborhood of the solution $x(t)$ for $t \in [0, T]$. Then, the sensitivity with respect to the 
parameters is given by
\begin{equation}\label{eq:ade-dp}
d_{p} x(T, p)^\T = \int_0^T d_p f(x(t),p,t)^{\T} \lambda(t) dt,
\end{equation}
where $\lambda(t)\in \R^{n_x \times n_x}$ solves the adjoint system
\begin{subequations}\label{eq:ade}
\begin{align}
\dot \lambda(t) & =-d_{x} f(x(t), p, t)^{\T} \lambda(t), \quad t \in [0, T],\label{ade-a}\\
\lambda(T) &= I \in \R^{n_x \times n_x}.\label{ade-b}
\end{align}
\end{subequations}

\begin{proof}
This proof is elaborated on in a broader context in Section \ref{sec:Solving-Para-Prob}.
\end{proof}
\end{theorem}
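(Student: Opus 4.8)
The plan is to reduce the claim to the variational system of Theorem~\ref{theorem-variational-system} and then eliminate the (possibly large) sensitivity matrix $\delta(t) := d_p x(t,p)$ by an integrating‑factor argument. Write $A(t) := d_x f(x(t),p,t) \in \R^{n_x\times n_x}$ and $B(t) := d_p f(x(t),p,t) \in \R^{n_x\times n_p}$; both are continuous on $[0,T]$ by hypothesis. By Theorem~\ref{theorem-variational-system}, $\delta$ exists, is continuous, and solves the linear matrix IVP $\dot\delta(t) = A(t)\delta(t) + B(t)$ with $\delta(0)=0$; in particular $\delta(T)$ is well defined. Since $A$ is continuous, the linear backward IVP \eqref{eq:ade} has a unique continuously differentiable solution $\lambda\colon[0,T]\to\R^{n_x\times n_x}$, so the right‑hand side of \eqref{eq:ade-dp} makes sense.

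The computation I would then carry out is to differentiate the matrix‑valued product $t\mapsto \lambda(t)^\T\delta(t)\in\R^{n_x\times n_p}$. Using \eqref{ade-a} in the form $\dot\lambda^\T = -\lambda^\T A$ together with \eqref{vde-p-a},
\begin{equation}
\frac{d}{dt}\big(\lambda(t)^\T\delta(t)\big)
= \dot\lambda(t)^\T\delta(t) + \lambda(t)^\T\dot\delta(t)
= -\lambda(t)^\T A(t)\delta(t) + \lambda(t)^\T\big(A(t)\delta(t)+B(t)\big)
= \lambda(t)^\T B(t),
\end{equation}
so the term carrying the unknown $\delta(t)$ cancels on the interior of $[0,T]$. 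Integrating over $[0,T]$ and inserting the boundary data $\lambda(T)=I$ from \eqref{ade-b} and $\delta(0)=0$ from \eqref{vde-p-b} gives
\begin{equation}
\delta(T) = \lambda(T)^\T\delta(T) - \lambda(0)^\T\delta(0) = \int_0^T \lambda(t)^\T B(t)\,dt .
\end{equation}
Transposing both sides yields $d_p x(T,p)^\T = \delta(T)^\T = \int_0^T B(t)^\T\lambda(t)\,dt = \int_0^T d_p f(x(t),p,t)^\T\lambda(t)\,dt$, which is \eqref{eq:ade-dp}.

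I do not expect a genuine obstacle here: the argument is a one‑line integrating‑factor identity, and the only points needing care are the bookkeeping of transposes (so that the products $\lambda^\T A\delta$ really cancel and the dimensions stay consistent) and the well‑posedness remarks above. If one instead wants a derivation that exhibits \emph{why} the adjoint system takes this form and that extends to the full problem \eqref{eq:parameter-problem-general}, one can introduce the Lagrangian $p\mapsto \mc{C}(x(T,p)) + \int_0^T \la \lambda(t), \dot x(t) - f(x(t),p,t)\ra\,dt$, integrate the $\la\lambda,\dot x\ra$ term by parts, and choose $\lambda$ so that every term multiplying $d_p x(t,p)$ vanishes; this forces precisely \eqref{ade-a}--\eqref{ade-b} and reproduces \eqref{eq:adjoint-sensitivity-column-form} directly. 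That route is the one developed in Section~\ref{sec:Solving-Para-Prob}; for the present statement, which only concerns the sensitivity, the short integrating‑factor proof above is what I would present.
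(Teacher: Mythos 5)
Your proof is correct, and it takes a genuinely different route from the one the paper points to. The paper proves the adjoint formula (in the broader form of Theorem~\ref{thm:sensitivity-continuous}, Appendix~\ref{appendix-proof-sensitivity-continuous}) by introducing the Lagrangian $\mc{C}(x(T)) - \int_0^T \la \lambda, \dot x - f\ra\,dt$, integrating the $\la\lambda, d_p\dot x\ra$ term by parts, and \emph{choosing} $\lambda$ to satisfy \eqref{ade-a}--\eqref{ade-b} so that all terms multiplying the unknown Jacobian $d_p x$ vanish; the adjoint system thus emerges as a stationarity condition, and the same Lagrangian template is reused verbatim for the discretized problem in Theorem~\ref{theorem:discrete-sensitivity}. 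You instead take the adjoint system as given, pair it against the variational system of Theorem~\ref{theorem-variational-system}, and observe that $\tfrac{d}{dt}\big(\lambda^\T\delta\big) = -\lambda^\T A\delta + \lambda^\T(A\delta+B) = \lambda^\T B$, after which the boundary data $\lambda(T)=I$, $\delta(0)=0$ give \eqref{eq:ade-dp} in one integration. Your computation is dimensionally consistent and complete (the well-posedness remarks for the backward linear IVP are the right ones to make), and it has the advantage of being a short, self-contained verification of the full matrix-valued identity that makes the duality between \eqref{eq:vde-p} and \eqref{eq:ade} explicit. What it does not deliver — and what the paper's Lagrangian route buys — is an explanation of \emph{why} the adjoint system has this form and a derivation that transfers unchanged to the discretized problem \eqref{eq:resulting-NLP-general}, which is the whole point of Section~\ref{sec:Solving-Para-Prob} and the commutative diagram of Figure~\ref{fig:sensitivity-diagram}. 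You correctly identify this in your closing remark, so the only substantive difference is which argument is presented as primary.
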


Similar to the variational system of Theorem \ref{theorem-variational-system}, solving the adjoint system \eqref{eq:ade} requires the solution $x(t)$. The adjoint system is matrix-
valued of dimension $n_x \times n_x$, in contrast to the variational system which is of dimension $n_x \times n_p$. Thus, if $n_p \gg n_x$ as will be the case in our scenario, it is more efficient to solve 
\eqref{eq:ade} instead of \eqref{eq:vde-p}. Another major difference is that the adjoint system is defined \textit{backwards} in time, starting from the endpoint $T$. This has 
important computational advantages for our setting. In view of the required gradient \eqref{eq:adjoint-sensitivity-column-form}, we are not interested in the full sensitivity but 
rather in the derivative along the direction $\eta:= \Egrad_x \mc{C}(x(T,p))$, i.e. $d_{p} x(T, p)^\T\eta$. This can be achieved by exploiting the structure of the adjoint system, by multiplying \eqref{eq:ade} from the right by $\eta$ and setting $\ol{\lambda}(t) := \lambda(t) \eta$. The resulting IVP is again an adjoint system, no longer being matrix-valued but vector-valued $\ol{\lambda}(t) \in \R^{n_x}$, with $\ol{\lambda}(T) = \eta \in \R^{n_x}$. Thus, from now on, we consider the latter case and denote $\ol{\lambda}(t)$ again by $\lambda(t)$, which is vector-valued.

As a consequence, we will focus on the adjoint system \eqref{eq:ade} in the remainder of this paper. In particular, \eqref{eq:ade-dp} will be used to estimate parameters $p$ by solving \eqref{eq:parameter-problem-general} using a gradient descent flow. This requires to solve the adjoint system numerically. However, a viable alternative to this `optimize-then-discretize' approach is to reverse this order, that is to discretize problem \eqref{eq:parameter-problem-general} first, and then to derive a corresponding \textit{time-discrete} adjoint system. It turns out that both ways are equivalent if a proper class of numerical integration scheme is chosen for discretizing the system in time. This will be shown in Section \ref{sec:Solving-Para-Prob} after collecting required background material in Section \ref{sec:Symplectiv-RK}. 

\subsection{Symplectic Partitioned Runge-Kutta Methods}\label{sec:Symplectiv-RK}
In this section, we recall basic concepts of numerical integration from \cite{Hairer:2006,Sanz-Serna:2016aa} in order to prepare Section \ref{sec:Solving-Para-Prob}. Symplectic schemes are typically applied to Hamiltonian systems in order to conserve certain quantites, often with a physical background. The pseudo-Hamiltonian defined below by \eqref{eq:def-pseudo-Hamiltonian} will play a similar role, albeit there is no physical background for our concrete scenario to be studied in subsequent Sections.

A general \textit{$s$-stage Runge--Kutta (RK) method} with $s \in \N$ is given by \cite{hairer_solving_1993}
\begin{subequations}\label{eq:RKF}
\begin{align}
x_{n+1} &= x_n + h_n \sum_{i=1}^s b_i k_{n,i}, \label{eq:RKF-a}\\
k_{n,i} &= f(X_{n,i}, p, t_n +c_ih_n),\label{eq:RKF-b}\\
X_{n,i} &= x_n + h_n \sum_{j=1}^s a_{ij}k_{n,j}\label{eq:RKF-c}, 
\end{align}
\end{subequations}
where $h_n = t_{n+1} - t_n$ in \eqref{eq:RKF-a} denotes a step-size.
The coefficients $a_{ij}, b_i, c_i \in \mathbb R$ can be arranged in a so-called Butcher tableau (Fig.~\ref{fig:butcher}), 
with entries $a_{ij}$ defining the Runge--Kutta matrix $A$.

\begin{figure}[h!]
\begin{center}
$\begin{array}{c|ccccc}	
c_1		& a_{11} 	& a_{12}	& \dots 	& a_{1s}\\
c_2		& a_{21}	& a_{22}	& \dots 	& a_{2s}\\
\vdots 	& \vdots 	& \vdots 	&\ddots 	& \vdots\\
c_s 		& a_{s1} 	& a_{s2}	& \dots 	& a_{ss}\\
\hline
& b_1	& b_2 	& \dots 	& b_{s}
\end{array}=
\begin{array}{c|c}
{c}& A\\
\hline\\[-3mm]
       & {b^T} \\
\end{array}$\\[3mm]

\begin{tabular}{c|ccccc}	
$c_1$ &&&&&\\
$c_2$ & $a_{21}$ &&&&\\
$c_3$ & $a_{31}$ & $a_{32}$&&&\\
$\vdots$ &\vdots&\vdots &$\ddots$&&\\
$c_s$ & $a_{s1}$ & $a_{s2}$&\dots&$a_{ss-1}$&\\
\hline
& $b_1$& $b_2$& \dots & $b_{s-1}$ & $b_{s}$
\end{tabular}
\end{center}
\caption{\textsc{Above}: Butcher tableau of a general $s$-stage Runge--Kutta method. \textsc{Below}: Butcher tableau of a $s$-stage explicit Runge--Kutta method.}
\label{fig:butcher}
\end{figure}

Lower-triangular Runge--Kutta matrices $A$, i.e.
\begin{equation}\label{eq:explicit-RK}
a_{ij} = 0\quad \text{for}\quad j \ge i,
\end{equation}
result in \textit{explicit} RK schemes, and in \textit{implicit} RK schemes otherwise. Implicit Runge--Kutta methods are well-suited for integrating numerically stiff ODEs, but are also 
significantly more complex than explicit ones. Since \eqref{eq:RKF-b} can not be solved explicitly, a system of algebraic equations has to be solved. The following theorem specifies
 the conditions under which a solution for these equations exists.

\begin{theorem}[\bfseries Existence of a Numerical Solution; {\cite[Ch. II, Thm. 7.2]{hairer_solving_1993}}]
For any $p \in \R^{n_p}$ let $f(\cdot,p,\cdot)$ of \eqref{ivp-b} be continuous and satisfy a Lipschitz condition on $\mathbb R^{n_x} \times [0, T]$ with constant $L$, independent of $p$. If 
\begin{equation}\label{eq:step-size-condition}
h < \frac{1}{L \max_{i=1, \dots, s}\sum_{j=1}^s |a_{ij}|}
\end{equation}
there exists a unique solution of \eqref{eq:RKF}, which can be obtained by {fixed-point iteration}. If $f(x,p,t)$ is $q$ times differentiable, the functions $k_{n,i}$ (as functions of $h$) are also in $C^q$.
\begin{proof}
A detailed proof can be found in \cite[Ch. II, Thm. 7.2]{hairer_solving_1993}.
\end{proof}

\end{theorem}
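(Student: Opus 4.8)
The plan is to rewrite the stage equations \eqref{eq:RKF-b}--\eqref{eq:RKF-c} as a single fixed-point equation on the product space $\R^{s n_x}$ and to invoke the Banach fixed-point theorem. Substituting \eqref{eq:RKF-c} into \eqref{eq:RKF-b}, the stage slopes $K = (k_{n,1},\dots,k_{n,s})$ satisfy $K = \Psi(K)$, where
\begin{equation*}
\Psi(K)_i := f\Big(x_n + h_n \sum_{j=1}^s a_{ij} k_{n,j},\; p,\; t_n + c_i h_n\Big), \qquad i \in [s].
\end{equation*}
Since $f(\cdot,p,\cdot)$ is globally Lipschitz on $\R^{n_x}\times[0,T]$ with constant $L$, the map $\Psi$ is defined on all of $\R^{s n_x}$, so no localization is needed and, because $L$ is independent of $p$, every estimate below is uniform in $p$.

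First I would equip $\R^{s n_x}$ with the stage-wise maximum norm $\|K\| := \max_{i\in[s]} |k_{n,i}|$, where $|\cdot|$ is any fixed norm on $\R^{n_x}$. For $K,\tilde K$ the Lipschitz property of $f$ gives
\begin{equation*}
|\Psi(K)_i - \Psi(\tilde K)_i| \;\le\; L\,|h_n| \sum_{j=1}^s |a_{ij}|\, |k_{n,j} - \tilde k_{n,j}| \;\le\; \Big( L\,|h_n| \max_{i\in[s]} \sum_{j=1}^s |a_{ij}| \Big) \|K - \tilde K\| ,
\end{equation*}
hence $\Psi$ is a contraction precisely under the step-size bound \eqref{eq:step-size-condition}. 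The Banach fixed-point theorem then yields a unique solution $K$ of \eqref{eq:RKF}, obtained as the limit of the fixed-point iteration $K^{(m+1)} = \Psi(K^{(m)})$ from any initialization; the $X_{n,i}$ and $x_{n+1}$ are recovered from \eqref{eq:RKF-c} and \eqref{eq:RKF-a}.

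For the regularity statement I would treat $h$ as an additional variable and apply the implicit function theorem to $G(K,h) := K - \Psi(K,h)$. The dependence of $\Psi$ on $(K,h)$ runs only through the inner maps $x_n + h\sum_j a_{ij}k_{n,j}$ and $t_n + c_i h$, which are polynomial in $(K,h)$, so $G$ is $C^q$ whenever $f$ is $C^q$. At a zero of $G$ with $|h|$ in the admissible range, the partial differential $D_K G = I - D_K\Psi$ is invertible: the contraction estimate shows $\Psi(\cdot,h)$ is Lipschitz in $K$ with constant $<1$, hence $\|D_K\Psi\| < 1$ and $I - D_K\Psi$ is invertible via a Neumann series. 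The implicit function theorem then produces a $C^q$ local branch $h\mapsto K(h)$, and global uniqueness from the Banach step forces this branch to coincide with the actual RK solution on the entire interval $\{\,|h| < (L\max_i\sum_j|a_{ij}|)^{-1}\,\}$; thus each $k_{n,i}$ is $C^q$ as a function of $h$.

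This argument is essentially textbook, so no deep obstacle is expected; the points that require care are (i) choosing the stage-wise max-norm on $\R^{s n_x}$ so that the contraction constant comes out sharply as $L|h_n|\max_i\sum_j|a_{ij}|$ rather than a looser matrix norm, and (ii) recognizing that the invertibility of $I - D_K\Psi$ needed for the implicit function theorem is exactly a consequence of the contraction, and that global uniqueness is what upgrades the local $C^q$ conclusion to regularity on the full admissible step-size interval. A fully detailed version is given in \cite[Ch.~II, Thm.~7.2]{hairer_solving_1993}.
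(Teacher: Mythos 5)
Your proposal is correct and is precisely the standard Banach fixed-point argument (stage-wise max norm, contraction constant $L|h_n|\max_i\sum_j|a_{ij}|$, implicit function theorem for the $C^q$ dependence on $h$) given in the cited reference \cite[Ch.~II, Thm.~7.2]{hairer_solving_1993}; the paper itself supplies no proof beyond that citation. Nothing to add.
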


Suppose that the given system \eqref{ivp-b} is \textit{partitioned} into two parts with $x = (\textcolor{black}{y}^\T, \textcolor{black}{z}^\T)^\T$, $f = (f_1^\T, f_2^\T)^\T$ and
\begin{subequations}\label{eq:partitioned-system}
\begin{align}
\dot {\textcolor{black}{y}} &= f_1({\textcolor{black}{y}}, {\textcolor{black}{z}}, t),\label{eq:partitioned-system-a}\\
\dot {\textcolor{black}{z}} &= f_2({\textcolor{black}{y}}, {\textcolor{black}{z}}, t).\label{eq:partitioned-system-b}
\end{align}
\end{subequations}
\textit{Partitioned Runge--Kutta (PRK) methods} integrate \eqref{eq:partitioned-system} using 
two different sets of coefficients
\begin{subequations}\label{eq:partitioned-coeffs}
\begin{align}
a_{ij}, b_i, c_i \in \mathbb R \; \text{ for } \; \eqref{eq:partitioned-system-a},\\
 \ol{a}_{ij}, \ol b_i, \ol c_i \in \mathbb R \; \text{ for } \; \eqref{eq:partitioned-system-b}.
\end{align}
\end{subequations}
The following theorems state conditions under which RK methods preserve certain quantities that should be invariant under the flow of the system that is  integrated numerically. In this sense, such RK schemes are called \textit{symplectic}.

\begin{theorem}[{\bfseries Symplectic Runge--Kutta Method; \cite[Ch. VI, Thm. 7.6 and 7.10]{Hairer:2006}}]
Assume that the system, \eqref{ivp-b} has a quadratic invariant $I$, i.e. $I(\cdot, \cdot)$ is a real-valued bilinear mapping such that $(d / d t) I(x(t), x(t)) = 0$, for 
each $t$ and $x_0$. If the coefficients of a Runge--Kutta method \eqref{eq:RKF} satisfy
\begin{equation}\label{eq:symplectic-condition}
b_i a_{ij} + b_j a_{ji} - b_ib_j = 0,
\end{equation}
then the value $I(x_n, x_n)$ does not depend on $n$.
\end{theorem}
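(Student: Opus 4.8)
The plan is to run the classical argument: expand $I(x_{n+1},x_{n+1})$ by bilinearity using the step \eqref{eq:RKF-a}, and cancel the mixed terms by combining the stage relations \eqref{eq:RKF-b}--\eqref{eq:RKF-c} with the symplecticity condition \eqref{eq:symplectic-condition}. Before touching the numerical scheme, the one substantive preliminary observation is this: the hypothesis only states $(d/dt)I(x(t),x(t))=0$ \emph{along every solution} of \eqref{ivp-b}; differentiating and using bilinearity gives $I(f(x(t),p,t),x(t))+I(x(t),f(x(t),p,t))=0$, and since at any fixed time the point $x(t)$ ranges over the whole state space as the initial value is varied (the time-$t$ flow being a diffeomorphism), this upgrades to the \emph{pointwise} bilinear identity
\[
I\big(f(x,p,t),x\big)+I\big(x,f(x,p,t)\big)=0\qquad\text{for all }x,\ t .
\]
This is precisely the step that lets the identity be applied at the internal stage vectors $X_{n,i}$, which in general do not lie on a single trajectory.

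Next I would apply this identity at each stage. By \eqref{eq:RKF-b} one has $k_{n,i}=f(X_{n,i},p,t_n+c_ih_n)$, hence $I(k_{n,i},X_{n,i})+I(X_{n,i},k_{n,i})=0$; substituting $X_{n,i}=x_n+h_n\sum_j a_{ij}k_{n,j}$ from \eqref{eq:RKF-c} and expanding bilinearly yields, for each $i$,
\[
I(k_{n,i},x_n)+I(x_n,k_{n,i})+h_n\sum_{j=1}^s a_{ij}\big(I(k_{n,i},k_{n,j})+I(k_{n,j},k_{n,i})\big)=0 .
\]
Multiplying by $b_i$, summing over $i$, and relabelling $i\leftrightarrow j$ in the summand carrying $I(k_{n,j},k_{n,i})$ converts the double sum into $\sum_{i,j}(b_ia_{ij}+b_ja_{ji})\,I(k_{n,i},k_{n,j})$, which by \eqref{eq:symplectic-condition} equals $\sum_{i,j}b_ib_j\,I(k_{n,i},k_{n,j})$. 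Thus
\[
\sum_{i=1}^s b_i\big(I(k_{n,i},x_n)+I(x_n,k_{n,i})\big)=-\,h_n\sum_{i,j=1}^s b_ib_j\,I(k_{n,i},k_{n,j}) .
\]

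Finally, expanding the update \eqref{eq:RKF-a} by bilinearity gives
\[
I(x_{n+1},x_{n+1})=I(x_n,x_n)+h_n\sum_{i=1}^s b_i\big(I(x_n,k_{n,i})+I(k_{n,i},x_n)\big)+h_n^2\sum_{i,j=1}^s b_ib_j\,I(k_{n,i},k_{n,j}),
\]
and substituting the previous display into the middle term cancels it against the last term, leaving $I(x_{n+1},x_{n+1})=I(x_n,x_n)$; induction on $n$ then yields the claim. I expect the only genuinely delicate point to be the first one — passing from the trajectory-wise conservation law to the pointwise bilinear identity, where one should be careful about the domain on which $f$ and $I$ are defined and about surjectivity of the flow onto that domain (and note that for a non-symmetric $I$ one may freely replace it by its symmetrization, since only $I(x,x)$ enters). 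Everything after that is bookkeeping with bilinear forms and a single index transposition.
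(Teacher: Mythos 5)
Your proof is correct. The paper does not actually prove this theorem---it only cites Hairer, Lubich and Wanner---and your argument is precisely the standard proof from that reference: upgrade the trajectory-wise conservation law to the pointwise bilinear identity $I(f(x,p,t),x)+I(x,f(x,p,t))=0$ (valid since the Lipschitz flow passes through every state), apply it at the internal stages, and cancel the cross terms via the condition $b_ia_{ij}+b_ja_{ji}=b_ib_j$ after the index transposition. The one delicate point you flag, the passage from invariance along solutions to the pointwise identity needed at the stage vectors $X_{n,i}$, is handled correctly.
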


\begin{theorem}[{\bfseries Symplectic PRK Method; \cite[Thm. 2.4 and 2.6]{Sanz-Serna:2016aa}}]
Assume that $S(\cdot, \cdot)$ is a real-valued bilinear mapping such that $(d / d t) S(y(t), z(t)) = 0$ for each $t$ and $x_0$ of the solution $x(t) = [y(t)^{\top}, z(t)^{\top}]^{\top}$ of \eqref{eq:partitioned-system}. If the coefficients of the partitioned Runge--Kutta method \eqref{eq:partitioned-coeffs} satisfy
\begin{equation}\label{eq:def-symplectic-PRK}
b_i\ol a_{ij} - b_i\ol b_j + \ol b_ja_{ji} = 0, \quad \ol b_i = b_i, \quad \ol c_i = c_i,
\end{equation}
then the value $S(y_n, z_n)$ does not depend on $n$.
\end{theorem}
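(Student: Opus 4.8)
The plan is to compute $S(y_{n+1},z_{n+1})$ directly from the PRK update rules \eqref{eq:RKF}, \eqref{eq:partitioned-coeffs} and to exploit the bilinearity of $S$. Denote the internal stage values by $Y_i,Z_i$ and the stage derivatives by $k_i=f_1(Y_i,Z_i,t_n+c_ih)$ and $\ell_i=f_2(Y_i,Z_i,t_n+\bar c_ih)$. Expanding $y_{n+1}=y_n+h\sum_i b_ik_i$ and $z_{n+1}=z_n+h\sum_i\bar b_i\ell_i$ and using bilinearity gives
\begin{equation*}
S(y_{n+1},z_{n+1})=S(y_n,z_n)+h\sum_i b_i S(k_i,z_n)+h\sum_i \bar b_i S(y_n,\ell_i)+h^2\sum_{i,j} b_i\bar b_j S(k_i,\ell_j),
\end{equation*}
so it suffices to show that the three $h$- and $h^2$-terms cancel under \eqref{eq:def-symplectic-PRK}.

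Next I would convert the invariance hypothesis into a pointwise bilinear identity. Since $(d/dt)S(y(t),z(t))=S(f_1(y,z,t),z)+S(y,f_2(y,z,t))=0$ holds along the solution through \emph{every} initial value $x_0$, and solution trajectories of \eqref{ivp-b} cover the state space, we obtain the identity $S(f_1(y,z,t),z)+S(y,f_2(y,z,t))=0$ in $(y,z,t)$. Because $\bar c_i=c_i$, this identity may be evaluated at the stage point $(Y_i,Z_i,t_n+c_ih)$, yielding $S(k_i,Z_i)+S(Y_i,\ell_i)=0$ for each stage $i$. Substituting $Z_i=z_n+h\sum_j\bar a_{ij}\ell_j$ and $Y_i=y_n+h\sum_j a_{ij}k_j$ (the two coefficient sets of the partitioned method) and using bilinearity once more turns this into
\begin{equation*}
S(k_i,z_n)+S(y_n,\ell_i)+h\sum_j\bar a_{ij}S(k_i,\ell_j)+h\sum_j a_{ij}S(k_j,\ell_i)=0.
\end{equation*}

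Finally I would multiply this relation by $hb_i=h\bar b_i$, sum over $i$, relabel $i\leftrightarrow j$ in the last double sum, and use the resulting formula to replace $h\sum_i b_iS(k_i,z_n)+h\sum_i\bar b_iS(y_n,\ell_i)$ in the first display. The quadratic terms then collect to $-h^2\sum_{i,j}\bigl(b_i\bar a_{ij}-b_i\bar b_j+\bar b_j a_{ji}\bigr)S(k_i,\ell_j)$, which vanishes by \eqref{eq:def-symplectic-PRK}. Hence $S(y_{n+1},z_{n+1})=S(y_n,z_n)$, and induction on $n$ finishes the proof. The step needing the most care is the passage from ``$S$ invariant along every trajectory'' to the pointwise identity, since the stage values $(Y_i,Z_i)$ need not lie on a single solution curve; this is where existence and uniqueness of solutions through arbitrary data (already assumed for \eqref{ivp-b}) and the condition $\bar c_i=c_i$ enter. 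Everything else is bookkeeping with bilinearity and one index swap. I would also note that the argument specializes to the unpartitioned case and recovers condition \eqref{eq:symplectic-condition} when $a_{ij}=\bar a_{ij}$ and $b_i=\bar b_i$.
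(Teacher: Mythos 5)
The paper gives no proof of this theorem; it is quoted from \cite[Thm.~2.4 and 2.6]{Sanz-Serna:2016aa}, so there is nothing internal to compare against. Your argument is the standard one from that literature and it is correct: expand $S(y_{n+1},z_{n+1})$ by bilinearity, use the pointwise identity $S(f_1(y,z,t),z)+S(y,f_2(y,z,t))=0$ at the stage points (which requires $\ol c_i=c_i$ so that $k_i$ and $\ell_i$ are evaluated at the same time, and $\ol b_i=b_i$ so the two stage sums combine), and check that the remaining $O(h^2)$ terms carry the coefficient $b_i\ol a_{ij}-b_i\ol b_j+\ol b_j a_{ji}$ after the index swap. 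You are also right that the only delicate step is upgrading invariance along trajectories to the pointwise identity; the hypothesis ``for each $t$ and $x_0$'' together with existence of solutions through arbitrary data is exactly what justifies it, since the internal stages $(Y_i,Z_i)$ do not lie on a single solution curve.
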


\begin{remark}\label{rem:construct-sPRK}
Assume the first set of Runge--Kutta coefficients are given and denoted by $a_{ij}, b_i, c_i$ with indices $i,j \in [s]$. This method is used for the first $n$-variables \eqref{eq:partitioned-system-a}. Furthermore, let $b_i \ne 0$ for all stages $i \in [s]$. In view of 
condition \eqref{eq:def-symplectic-PRK}, we can construct a \textit{symplectic} PRK method by choosing
\begin{equation}\label{eq:recipe}
\ol a_{ij} := b_j - b_ja_{ji}/b_i, \quad \ol b_i := b_i, \quad \ol c_i := c_i,
\end{equation}
as coefficients for the second $n$-variables \eqref{eq:partitioned-system-b}. This construction results in an overall \textit{symplectic PRK method} of the partitioned system \eqref{eq:partitioned-system}.
\end{remark}
 

\subsection{Computing Adjoint Sensitivities} \label{sec:Solving-Para-Prob}
There are two basic approaches for computing \eqref{eq:adjoint-sensitivity-column-form}, the \textit{differentiate-then-discretize} approach and the
 \textit{discretize-then-differentiate} approach. Figure \ref{fig:sensitivity-diagram} illustrates both approaches by paths colored with black and violet, respectively. Details are worked out in this section. Our main objective is to make this diagram \textit{commutative} by adopting a class of numerical schemes as outlined in the preceding section.

\begin{figure*}[h]
\begin{center}
\begin{tikzcd}[column sep=large]
\normalsize
\textbf{dynamical system}
\arrow[r,"\textcolor{blue}{\text{differentiate}}"]
\arrow[d,"\textcolor{violet}{\text{discretize}}",swap]
  &\normalsize \textcolor{blue}{\textbf{adjoint system}} 
    \arrow[d,"\textcolor{blue}{\text{discretize}}"]
\\
\normalsize\textcolor{violet}{\textbf{discretization}}
\arrow[r,"\textcolor{violet}{\text{differentiate}}",swap]
  &\normalsize \textbf{numerical sensitivity}
\end{tikzcd}
\end{center}
\caption{Illustration of the methodological part of this section. Our approach satisfies the commuting diagram, i.e.~\textit{identical} results are obtained either if the continuous problem is differentiated first and than discretized (black path), or the other way around (violet path).
}
\label{fig:sensitivity-diagram}
\end{figure*}
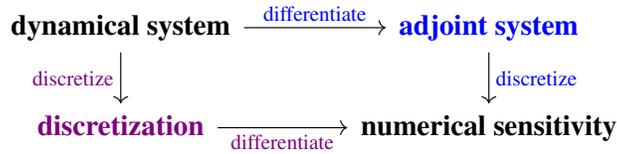

In the following, we drop the dependency of $x(t)$ on the parameter $p$, to simplify notation by just writing $x(t)$.
The following theorem details the black path of Figure \ref{fig:sensitivity-diagram}. 
\begin{theorem}[\bfseries Adjoint Sensitivity: Differentiate-then-Discretize]\label{thm:sensitivity-continuous}
The gradient \eqref{eq:adjoint-sensitivity-column-form} of the objective function \eqref{eq:def-Phi} $\Phi(p) = C(x(T))$ of problem \eqref{eq:parameter-problem-general} with respect to the parameter $p$ is given by
\begin{equation}\label{eq:continuous-sensitivity-parameter}
\Egrad \Phi(p) = \int_0^T d_p f(x(t),p,t)^{\T} \lambda(t) dt,
\end{equation}
where $x(t), \lambda(t)$ solve the two-point boundary value problem
\begin{subequations}\label{eq:two-point-boundary}
\begin{align}
\dot x(t) &= f(x(t),p,t),  &x(0) &= x_0,\label{eq:two-point-boundary-a}\\
\dot \lambda(t) &= -d_x f(x(t),p,t)^{\T} \lambda(t), &\lambda(T) &= \Egrad \mc{C}(x(T)).\label{eq:two-point-boundary-b}
\end{align}
\end{subequations}
In terms of the pseudo-Hamiltonian 
\begin{equation}\label{eq:def-pseudo-Hamiltonian}
H(x,\lambda,p,t) = \la f(x, p, t), \lambda \ra, 
\end{equation}
the system has the following form
\begin{subequations}\label{eq:continuous-first-order-conditions}
\begin{align}
\dot x(t) &= \phantom{-}d_\lambda H(x,\lambda,p,t) ,  &x(0) &= x_0,\\
\dot \lambda(t) &= -d_x H(x,\lambda,p,t) , &\lambda(T) &= \Egrad \mc{C}(x(T)).
\end{align}
\end{subequations}
\end{theorem}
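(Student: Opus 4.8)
The plan is to derive the adjoint formula by pairing the \emph{variational system} of Theorem~\ref{theorem-variational-system} with a suitably chosen backward-in-time multiplier, in the classical adjoint-state / Lagrangian spirit. Write $\eta := \Egrad_x\mc{C}(x(T))$ and recall from \eqref{eq:adjoint-sensitivity-column-form} that $\Egrad\Phi(p) = d_p x(T,p)^\T\eta$, so it suffices to obtain a computable expression for $d_p x(T,p)^\T\eta = \delta(T)^\T\eta$, where $\delta(t) = d_p x(t,p) \in \R^{n_x\times n_p}$ solves the variational system \eqref{eq:vde-p} with $\delta(0)=0$.

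First I would introduce $\lambda(t)\in\R^{n_x}$ as the solution of the linear terminal-value problem $\dot\lambda = -d_x f(x(t),p,t)^\T\lambda$, $\lambda(T)=\eta$; since $x(t)$ is already known and the ODE is linear with continuous coefficients on $[0,T]$, $\lambda$ exists and is $C^1$ on the whole interval (this also legitimizes the reduction from the matrix-valued adjoint of Theorem~\ref{theorem-adjoint-system} to this vector-valued one). Next I would differentiate the pairing $t\mapsto\delta(t)^\T\lambda(t)\in\R^{n_p}$:
\[
\frac{d}{dt}\big(\delta^\T\lambda\big)=\dot\delta^\T\lambda+\delta^\T\dot\lambda
=\big(d_x f\,\delta + d_p f\big)^\T\lambda-\delta^\T(d_x f)^\T\lambda
=d_p f^\T\lambda,
\]
the two $d_x f$ contributions cancelling exactly. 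Integrating over $[0,T]$ and using $\delta(0)=0$ on the left boundary and $\lambda(T)=\eta$ on the right gives $\delta(T)^\T\eta=\int_0^T d_p f(x(t),p,t)^\T\lambda(t)\,dt$, which together with $\Egrad\Phi(p)=\delta(T)^\T\eta$ is exactly \eqref{eq:continuous-sensitivity-parameter}; the two-point boundary value problem \eqref{eq:two-point-boundary} is then simply the forward IVP for $x$ combined with the terminal-value problem defining $\lambda$. For the Hamiltonian form \eqref{eq:continuous-first-order-conditions} I would just compute the partials of $H(x,\lambda,p,t)=\la f(x,p,t),\lambda\ra$, namely $d_\lambda H=f(x,p,t)$ and $d_x H=d_x f(x,p,t)^\T\lambda$, so that $\dot x=d_\lambda H$ and $\dot\lambda=-d_x H$ reproduce \eqref{eq:two-point-boundary} verbatim.

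I do not expect a deep obstacle: the content is the one-line cancellation identity above, essentially an integration by parts. The steps needing genuine care are bookkeeping ones — keeping transposes and the $n_x\times n_p$ versus $n_x\times n_x$ shapes straight, justifying the reduction to a vector-valued $\lambda$, and (for a fully self-contained account) invoking the differentiation-under-the-integral / Lebesgue step that already underlies Theorem~\ref{theorem-variational-system}. An equivalent alternative route would start from the Lagrangian $\mc{L}(p)=\mc{C}(x(T))-\int_0^T\la\lambda(t),\dot x(t)-f(x(t),p,t)\ra\,dt$, differentiate in $p$, integrate the $\la\lambda,\dot\delta\ra$ term by parts, and then choose $\lambda$ so as to annihilate the coefficient of $\delta(t)$ pointwise and the boundary term at $T$; this makes the emergence of the adjoint equation and of the condition $\lambda(T)=\Egrad\mc{C}(x(T))$ especially transparent, at the cost of manipulating the multiplier before it is pinned down.
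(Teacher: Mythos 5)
Your argument is correct, but it takes a genuinely different route from the paper's. You prove the identity by pairing the matrix-valued sensitivity $\delta(t)=d_p x(t,p)$ from the variational system (Theorem~\ref{theorem-variational-system}) with the backward multiplier $\lambda(t)$ and observing the one-line cancellation $\frac{d}{dt}(\delta^\T\lambda)=d_pf^\T\lambda$; integrating and using $\delta(0)=0$, $\lambda(T)=\Egrad\mc{C}(x(T))$ then yields \eqref{eq:continuous-sensitivity-parameter}. The paper instead runs the Lagrangian computation that you only sketch as an ``equivalent alternative'': it forms $\mc{L}(x,p,\lambda)=\mc{C}(x(T))-\int_0^T\la\lambda,\dot x-f\ra\,dt$, differentiates in $p$, integrates the $d_p\dot x^\T\lambda$ term by parts, and \emph{chooses} $\lambda$ to annihilate the coefficient of the unknown Jacobian $d_px$ and the boundary term at $T$. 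The trade-off is as you anticipate: your route is shorter and fully rigorous given that Theorem~\ref{theorem-variational-system} already guarantees existence and continuity of $\delta$, whereas the paper's route manipulates the multiplier before it is pinned down but makes the \emph{derivation} of the adjoint equation and of the terminal condition $\lambda(T)=\Egrad\mc{C}(x(T))$ transparent rather than verifying them after the fact; it also structurally mirrors the Lagrangian proof of the discrete counterpart (Theorem~\ref{theorem:discrete-sensitivity} via Lemma~\ref{lemma-gradient-lagrangian}), which is what makes the commutativity of the diagram in Figure~\ref{fig:sensitivity-diagram} easy to read off. Your closing computation of $d_\lambda H$ and $d_xH$ for the pseudo-Hamiltonian form matches the paper. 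The only point I would ask you to make explicit is the final bookkeeping step $\Egrad\Phi(p)=\delta(T)^\T\eta$ with $\eta=\Egrad_x\mc{C}(x(T))$, which you do state and which closes the argument.
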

\begin{proof}
See Appendix~\ref{appendix-proof-sensitivity-continuous}.
\end{proof}

\begin{remark}\label{rem:diff-discr}
The presence of the pseudo-Hamiltonian \eqref{eq:def-pseudo-Hamiltonian} suggests to use either a \textit{symplectic} RK method or a \textit{symplectic} PRK method to integrate 
the boundary value problem \eqref{eq:two-point-boundary}. In view of Remark~\ref{rem:construct-sPRK}, we can use a general RK method with coefficients $a_{ij}, b_i, c_i$ for $i,j \in [s]$ for the first variables \eqref{eq:two-point-boundary-a}, and another RK method with $\ol a_{ij},\ol b_i,\ol c_i$ for $i,j \in[s]$ satisfying \eqref{eq:recipe} for the second variables
\eqref{eq:two-point-boundary-b}. Again, this construction results in an overall \textit{symplectic PRK method} of the boundary problem \eqref{eq:two-point-boundary}. \textcolor{black}{Note that \eqref{eq:two-point-boundary-a} is independent of variable $\lambda$. Due to this property we can solve \eqref{eq:two-point-boundary} sequentially in practice, i.e. we first integrate \eqref{eq:two-point-boundary-a} and afterwards \eqref{eq:two-point-boundary-b}.}
\end{remark}

Now we consider the alternative violet path of Figure \ref{fig:sensitivity-diagram}. 
Applying a RK method with step-sizes $h_n = t_{n+1} - t_n>0$ to problem \eqref{eq:parameter-problem-general} results in the \textit{nonlinear optimization problem} 
\begin{subequations}\label{eq:resulting-NLP-general}
\begin{align}
\min_{p \in \mc{P}}\; & \mc{C}\big( x_N(p) \big )\\
\text{s.t.}\quad & x_{n+1} = x_n + h_n \sum_{i=1}^s b_i k_{n,i},\label{eq:NLP-general-a}\\
\quad & k_{n,i} = f( X_{n,i}, p,  t_n +c_ih_n), \quad i \in [s], \label{eq:NLP-general-b}\\
\quad & X_{n,i} = x_n + h_n \sum_{j=1}^s a_{ij}k_{n,j}, \quad i \in [s], \label{eq:NLP-general-c}\\
\quad & x_0 = x(0),\label{eq:NLP-general-d}
\end{align}
\end{subequations}
with $n = 0, \dots, N-1$.

Next, we \textit{differentiate} this problem and state the result in the following theorem.
\begin{theorem}[\bfseries Adjoint Sensitivity: Discretize-then-\\Differentiate Approach]\label{theorem:discrete-sensitivity}
Suppose the step-size $h_n$ satisfies condition \eqref{eq:step-size-condition}.
Then, the gradient of the objective function $\Phi(p) = \mc{C}(x_N(p))$ from \eqref{eq:resulting-NLP-general} with respect to parameter $p$ is given by
\begin{equation}\label{eq:discrete-sensitivity-parameter}
\Egrad \Phi(p) = \sum_{n=0}^{N-1} h_{n} \sum_{i=1}^{s} \ol b_i \left (d_p f(X_{n,i}, p, t_n +\ol c_ih_n) \right )^\T \Lambda_{n,i},
\end{equation}
where the discrete adjoint variables are given by
\begin{subequations}\label{eq:discrete-adjoints-RK}
\begin{align}
\lambda_{n+1} &= \lambda_{n} + h_n \sum_{i=1}^s \ol b_i \ell_{n,i},\label{eq:discrete-adjoints-RK-a}\\
\ell_{n,i} &= - d_x f(X_{n,i}, p, t_n + \ol c_i h_n)^\T \Lambda_{n,i}, \label{eq:discrete-adjoints-RK-b}\\
\Lambda_{n,i} &= \lambda_{n} + h_n \sum_{j=1}^s \ol a_{ij} \ell_{n,j},\label{eq:discrete-adjoints-RK-c}
\end{align}
\end{subequations}
with $n = 0, \dots, N-1$, $i \in [s]$ and step-size $h_n = t_{n+1} - t_n$. The internal stages $X_{n,i}$ are given by \eqref{eq:NLP-general-c}.
This scheme is a general RK method \eqref{eq:RKF} applied to the adjoint system \eqref{eq:two-point-boundary-b} with coefficients
\begin{equation}\label{eq:discr-sens-coefficient}
\ol a_{ij} = b_j - \frac{a_{ji} b_j}{b_i}, \quad \ol b_i = b_i, \quad \ol c_i = c_i,
\end{equation}
for $b_i \ne 0$ and $i,j=[s]$. 
\begin{proof}
An outline of the proof can be found in \cite[Thm. 3.6]{Sanz-Serna:2016aa}. Following the suggested outline, we provide a detailed proof in Appendix~\ref{appendix:theorem-discrete-sensitivity}.
\end{proof}
\end{theorem}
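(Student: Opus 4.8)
The plan is to treat the discrete optimization problem \eqref{eq:resulting-NLP-general} as an equality-constrained nonlinear program and apply the Lagrangian/adjoint calculus to it, exactly mirroring the continuous derivation of Theorem \ref{thm:sensitivity-continuous} but at the level of the Runge--Kutta recursion. First I would introduce Lagrange multipliers: a multiplier $\lambda_{n+1}$ (or $\lambda_n$, with an index convention to be fixed so that the final recursion comes out as stated) attached to each state update \eqref{eq:NLP-general-a}, and multipliers $\Lambda_{n,i}$ (up to scaling by $h_n b_i$) attached to the stage equations \eqref{eq:NLP-general-c} defining $X_{n,i}$, while \eqref{eq:NLP-general-b} is simply substituted. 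Form the Lagrangian $\mathcal{L}(p) = \mathcal{C}(x_N) + \sum_n \langle \lambda_{n+1}, x_n + h_n\sum_i b_i f(X_{n,i},p,\cdot) - x_{n+1}\rangle + \sum_{n,i} h_n b_i \langle \Lambda_{n,i}, x_n + h_n\sum_j a_{ij} f(X_{n,j},p,\cdot) - X_{n,i}\rangle$. Since on the feasible set every bracket vanishes, $\Egrad\Phi(p) = d_p \mathcal{L}$ for any choice of multipliers.

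Next I would differentiate $\mathcal{L}$ with respect to the independent block variables $x_n$ and $X_{n,i}$ and choose the multipliers to annihilate those derivatives — this is the standard trick that eliminates the unknown sensitivities $d_p x_n$, $d_p X_{n,i}$ from the gradient expression. Setting $\partial \mathcal{L}/\partial X_{n,i} = 0$ should yield the stage equation for the adjoint, i.e.\ a relation expressing $\Lambda_{n,i}$ in terms of $\lambda_n$ and the $\ell_{n,j} = -d_xf(X_{n,j},\cdot)^\T \Lambda_{n,j}$; this is where the coefficients $\ol a_{ij} = b_j - a_{ji}b_j/b_i$ must emerge, so the key algebraic step is to show that solving the linear system coming from $\partial_{X_{n,i}}\mathcal{L}=0$ reproduces \eqref{eq:discrete-adjoints-RK-c} with precisely those $\ol a_{ij}$. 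Setting $\partial \mathcal{L}/\partial x_n = 0$ should give the update \eqref{eq:discrete-adjoints-RK-a} together with $\ol b_i = b_i$, and $\partial \mathcal{L}/\partial x_N = 0$ (plus the boundary term $\Egrad\mathcal{C}(x_N)$) fixes the terminal condition $\lambda_N = \Egrad\mathcal{C}(x_N)$. What remains in $d_p\mathcal{L}$ after these cancellations is exactly the explicit sum over $d_p f(X_{n,i},\cdot)^\T$ terms, which I would collect and rearrange into \eqref{eq:discrete-sensitivity-parameter}. Finally, to prove the last assertion — that \eqref{eq:discrete-adjoints-RK} is itself a genuine RK method applied to the continuous adjoint ODE \eqref{eq:two-point-boundary-b} with Butcher coefficients \eqref{eq:discr-sens-coefficient} — I would simply match \eqref{eq:discrete-adjoints-RK-a}–\eqref{eq:discrete-adjoints-RK-c} against the RK template \eqref{eq:RKF}, read off $(\ol a_{ij}, \ol b_i, \ol c_i)$, and invoke Remark \ref{rem:construct-sPRK} / the symplectic-PRK condition \eqref{eq:def-symplectic-PRK} to identify the pair $(A,\ol A)$ as symplectic; the step-size hypothesis \eqref{eq:step-size-condition} guarantees (via the Existence theorem) that all the implicit stage systems — both forward and adjoint — are uniquely solvable, so every $d_p(\cdot)$ used above is well defined.

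I expect the main obstacle to be the bookkeeping in the stationarity condition with respect to the internal stages $X_{n,i}$: one must carefully track that $X_{n,i}$ appears both in the state update (through $\sum_i b_i f(X_{n,i},\cdot)$) and in every other stage equation (through $\sum_j a_{ij} f(X_{n,j},\cdot)$), so $\partial_{X_{n,i}}\mathcal{L}$ couples $\lambda_{n+1}$ with all the $\Lambda_{n,j}$; untangling this coupled linear system and showing it collapses exactly to the clean recursion \eqref{eq:discrete-adjoints-RK-c} with $\ol a_{ij} = b_j - a_{ji}b_j/b_i$ (which requires $b_i \neq 0$, explaining that hypothesis) is the delicate computation. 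A secondary subtlety is the index shift between $\lambda_n$ and $\lambda_{n+1}$ and the fact that the adjoint recursion runs backward from $n = N-1$ to $0$ with terminal rather than initial data — I would fix the convention at the outset so that the telescoping of the $\langle \lambda_{n+1}, x_{n+1}\rangle$ versus $\langle \lambda_n, x_n\rangle$ terms is transparent. Everything else — exchanging differentiation with the finite sums, differentiating $\mathcal{C}$ at $x_N$ — is routine.
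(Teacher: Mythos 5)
Your proposal follows essentially the same route as the paper's Appendix proof: form the Lagrangian of the discretized NLP \eqref{eq:resulting-NLP-general}, use the step-size condition \eqref{eq:step-size-condition} to guarantee that the constraint Jacobian with respect to the intermediate variables is invertible, determine the multipliers by stationarity and backward substitution (which is where $\ol a_{ij}=b_j-a_{ji}b_j/b_i$ emerges), and read off the gradient from $d_p\mc{L}$. The only difference is cosmetic: the paper keeps the stage derivatives $k_{n,i}$ as intermediate variables and attaches $h_nb_i\Lambda_{n,i}$ to the constraints $k_{n,i}=f(X_{n,i},p,\cdot)$, whereas you keep the $X_{n,i}$, which merely relabels the multipliers (yours equal $-\ell_{n,i}$ up to the substitution you anticipate) and leads to the same recursion \eqref{eq:discrete-adjoints-RK}.
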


\begin{remark}\label{rem:commuting}
Comparing the statements of Theorem~\ref{thm:sensitivity-continuous} and Theorem~\ref{theorem:discrete-sensitivity}, we see that the formula of the discrete sensitivity 
\eqref{eq:discrete-sensitivity-parameter} is an approximation of the integral \eqref{eq:continuous-sensitivity-parameter} with quadrature weights $b_i$. 
Furthermore, we observe that the coefficients of the constructed PRK method \eqref{eq:recipe} coincide with the derived coefficients \eqref{eq:discr-sens-coefficient}. 
Thus, by restricting the class of numerical schemes to \textit{symplectic PRK methods} satisfying \eqref{eq:def-symplectic-PRK}, the approaches due to the Theorem \ref{thm:sensitivity-continuous} (and Remark \ref{rem:diff-discr}) and Theorem \ref{theorem:discrete-sensitivity} are 
mathematically identical, and the diagram depicted by Figure \ref{fig:sensitivity-diagram} \textit{commutes}.
\end{remark}

\subsection{Two Specific Numerical Schemes}\label{sec:two-schemes}
{\color{black} We complement and illustrate the general result of the preceding section by specifying two numerical RK schemes of different order, the basic explicit Euler method and Heun's method, respectively.}

\subsubsection{Adjoint Sensitivity: Explicit Euler method}\label{sec:adjoint-Euler}
We integrate the forward dynamic \eqref{eq:two-point-boundary-a} with the explicit Euler method \cite{Hairer:2008aa}. The straightforward use of \eqref{eq:recipe} leads to
another Runge--Kutta method for integrating the adjoint system \eqref{eq:two-point-boundary-b}. The forward and backward coefficients of this overall symplectic partitioned Runge--Kutta method
are then given by Table~\ref{tab:euler}.
\begin{table}[h!]
\caption{Symplectic PRK coefficients induced by the explicit Euler method.}\label{tab:euler}

\begin{center}
\begin{tabular}{c}
\begin{tabular}{c|c}
$c_1$ & $a_{11}$ \\ \hline
& $b_1$
\end{tabular}  =
\begin{tabular}{c|c}
$0$ & \\ \hline
& $1$
\end{tabular}
\\[3mm]
forward coefficients\\[5mm]
\begin{tabular}{c|c}
$\ol c_1$ & $\ol a_{11}$ \\ \hline
& $\ol b_1$
\end{tabular}  =
\begin{tabular}{c|c}
$0$ & $1$ \\ \hline
& $1$
\end{tabular}
\\[3mm]
backward coefficients
\end{tabular}

\end{center}

\end{table}

By substituting the backward coefficients $\ol a_{11}, \ol b_1$ and $\ol c_1$ into \eqref{eq:discrete-adjoints-RK}, we derive the concrete formulas of the discrete adjoint method
\begin{subequations}\label{eq:discrete-backward-expl-euler}
\begin{align}
\lambda_{n+1} &= \lambda_{n} + h_n \ell_{n,1} \label{eq:discrete-backward-expl-euler-a}\\
\ell_{n,1} &= - \partial_x f(X_{n,1}, t_n)^\T \Lambda_{n,1}\\
\Lambda_{n,1} &= \lambda_{n} + h_n \ell_{n,1}.\label{eq:discrete-backward-expl-euler-c}
\end{align}
\end{subequations}
Note, that \eqref{eq:discrete-backward-expl-euler-c} coincides with \eqref{eq:discrete-backward-expl-euler-a}, that is by traversing from $n+1$ to $n$, we can rewrite \eqref{eq:discrete-backward-expl-euler} in the form  
\begin{equation}\label{eq:identities}
\lambda_{n} = \lambda_{n+1} + h_n d_x f(X_{n,1}, t_n)^\T  \lambda_{n+1}.
\end{equation}
Formula \eqref{eq:discrete-sensitivity-parameter} for the gradient of $\Phi(p) = \mc{C}(x_N(p))$ from \eqref{eq:resulting-NLP-general} reads
\begin{equation}\label{eq:euler-discrete-sensitivity-parameter}
\Egrad \Phi(p) = \sum_{n=0}^{N-1} h_{n} d_p f(X_{n,1},t_n)^\T  \lambda_{n+1}.
\end{equation}

\subsubsection{Adjoint Sensitivity: Heun's method}\label{sec:adjoint-Heun}
We integrate the forward dynamic \eqref{eq:two-point-boundary-a} with Heun's method \cite{Hairer:2008aa}. The straightforward use of \eqref{eq:recipe} leads to
another Runge--Kutta method for integrating the adjoint system \eqref{eq:two-point-boundary-b}. The forward and backward coefficients of this overall symplectic partitioned Runge--Kutta method
are then given by Table~\ref{tab:heun}
 \begin{table}[h!]
 \caption{Symplectic PRK coefficients induced by Heun's method.}\label{tab:heun}
\begin{center}
\begin{tabular}{c}
\begin{tabular}{c|cc}
$c_1$ & $a_{11}$ & $a_{12}$\\ 
$c_2$ & $a_{21}$ & $a_{22}$ \\ \hline
& $b_1$ & $b_2$
\end{tabular}=
\begin{tabular}{c|cc}
$0$ & \\ 
$1$ & $1$ \\ \hline
& $1/2$ & $1/2$
\end{tabular}
\\[3mm]
forward coefficients\\[5mm]
\begin{tabular}{c|cc}
$\ol c_1$ & $\ol a_{11}$ & $\ol a_{12}$\\ 
$\ol c_2$ & $\ol a_{21}$ & $\ol a_{22}$ \\ \hline
& $\ol b_1$ & $\ol b_2$
\end{tabular}=
\begin{tabular}{c|cc}
$0$ & $1/2$ & -$1/2$\\ 
$1$ & $1/2$ & $1/2$\\ \hline
& $1/2$ & $1/2$
\end{tabular}
\\[3mm]
backward coefficients
\end{tabular}

\end{center}
\label{table:heun}
\end{table}

Although the butcher tableau of the backward coefficients (see Table~\ref{table:heun}, right matrix) is completely dense, the final 
update formulas are \textit{explicit} \textcolor{black}{by traversing backwards in time}, as we will show below.
Again, we derive the concrete formulas of the discrete adjoint method by substituting the backward coefficients into \eqref{eq:discrete-adjoints-RK}
\begin{subequations}\label{eq:discrete-backward-heun}
\begin{align}
\lambda_{n+1} &= \lambda_{n} + h_n \big ( \tfrac{1}{2} \ell_{n,1} + \tfrac{1}{2} \ell_{n,2} \big )\label{eq:discrete-backward-heun-a}\\
\ell_{n,1} &= - d_x f(X_{n,1}, t_n)^\T \Lambda_{n,1}\\
\ell_{n,2} &= - d_x f(X_{n,2}, t_n + h_n)^\T \Lambda_{n,2}\\
\Lambda_{n,1} &= \lambda_{n} + h_n \big ( \tfrac{1}{2} \ell_{n,1} - \tfrac{1}{2} \ell_{n,2} \big )\label{eq:discrete-backward-heun-d}\\
\Lambda_{n,2} &= \lambda_{n} + h_n \big ( \tfrac{1}{2} \ell_{n,1} + \tfrac{1}{2} \ell_{n,2} \big ). \label{eq:discrete-backward-heun-e}
\end{align}
\end{subequations}
Note, that \eqref{eq:discrete-backward-heun-e} coincides with \eqref{eq:discrete-backward-heun-a}, which implies the equations
\begin{subequations}\label{eq:identities}
\begin{align}
\lambda_{n+1} &= \Lambda_{n,2}, \quad \text{and}\\
\ell_{n,2} &= - d_x f(X_{n,2}, t_n + h_n)^\T \lambda_{n+1}.
\end{align}
\end{subequations}
Using \eqref{eq:identities}, we reformulate \eqref{eq:discrete-backward-heun-d}
\begin{subequations}
\begin{align}
&\Lambda_{n,1} = \lambda_{n} + h_n \big ( \tfrac{1}{2} \ell_{n,1} - \tfrac{1}{2} \ell_{n,2} \big ) = \lambda_{n} + h_n \big ( \tfrac{1}{2} \ell_{n,1} - \tfrac{1}{2} \ell_{n,2} \big )+ (h_n \ell_{n,2}- h_n \ell_{n,2})\\
&= \lambda_{n} + h_n \big ( \tfrac{1}{2} \ell_{n,1} + \tfrac{1}{2} \ell_{n,2} \big ) - h_n \ell_{n,2} \stackrel{\eqref{eq:discrete-backward-heun-a}}{=} \lambda_{n+1} - h_n \ell_{n,2}\\
&\stackrel{\eqref{eq:identities}}{=} \lambda_{n+1} + h_n d_x f(X_{n,2}, t_n + h_n)^\T \lambda_{n+1}.\label{eq:expl-backwar-step}
\end{align}
\end{subequations}
Formula \eqref{eq:expl-backwar-step} is an explicit Euler step traversing backwards from $n+1$ to $n$. Thus, we can rewrite the overall scheme \eqref{eq:discrete-backward-heun} as
\begin{subequations}
\begin{align}
\tilde \lambda_n &= \lambda_{n+1} + h_n d_x f(X_{n,2}, t_n + h_n)^\T \lambda_{n+1}\\
\begin{split}
\lambda_n &= \lambda_{n+1} + \frac{h_n}{2} \Big (d_x f(X_{n,1}, t_n)^\T \tilde \lambda_{n} + d_x f(X_{n,2}, t_n + h_n)^\T \lambda_{n+1} \Big ).
\end{split}
\end{align}
\end{subequations}
Again, this is an \textit{explicit method traversing backwards from $n+1$ to $n$}. Formula \eqref{eq:discrete-sensitivity-parameter} for the gradient of $\Phi(p) = \mc{C}(x_N(p))$ from \eqref{eq:resulting-NLP-general} has the form 
\begin{equation}\label{eq:heun-discrete-sensitivity-parameter}
\begin{split}
\partial_p \mc{C}(x_N) &= \sum_{n=0}^{N-1} \frac{h_{n}}{2} \Big (d_p f(X_{n,1},t_n)^\T \tilde \lambda_n  + d_p f(X_{n,2},t_n + h_n)^\T \lambda_{n+1} \Big ).
\end{split}
\end{equation}

\begin{remark}\label{rem:explicit-backwards}
\textcolor{black}{Both example schemes (explicit Euler \& Heun's method) have in common that the final update schemes of the 
adjoint integration can be solved \textit{explicitly} by traversing backwards from $n+1 \to n$. Note that this holds for these two specific numerical schemes, but may not hold in general for other higher-order schemes.}
\end{remark}

\section{Image Labeling using Geometric Assignment}
\label{sec:Assignment-Flow}
In this section, we summarize material from \cite{Astroem2017} and \cite{Zeilmann:2018aa} required in the remainder of this paper. See also \cite{Schnorr:2019aa} for a discussion of the assignment flow approach in a broader context.

Let $\mc{G}=(\mc{V},\mc{E})$ be a given undirected graph with $m := |\mc{V}|$ vertices and let
\begin{equation}\label{eq:def-FI}
\begin{split}
 &f \colon \mc{V} \to \mc{F},\quad i \mapsto f_i \in \mc{F} \quad \text{with}\\ &f(\mc{V}) =: \mc{F}_{\mc{V}} \subset \mc{F}
\end{split}
\end{equation}
be data on the graph given in a metric space $(\mc{F},d)$. We call $\mc{F}_{\mc{V}}$ \textit{image data} given by \textit{features} $f_{i}$ extracted from a raw image at pixel $i \in \mc{V}$ in a preprocessing step. Along with $f$ we assume prototypical data
\begin{equation}\label{eq:def-GJ}
\mc{X} = \big\{ \ell_{1}, \dots, \ell_n \big\} \subset \mc{F}
\end{equation}
to be given, henceforth called \textit{labels}. Each label $\ell_{j}$ represents the data of class $j$. \textit{Image labeling} denotes the problem of finding an assignment $\mc{V} \to \mc{X}$  assigning class labels to nodes depending on the image data $\mc{F}_{\mc{V}}$ and the local context encoded by the graph structure $\mc{G}$. We refer to \cite{Huhnerbein:2018aa} for more details and background on the image labeling problem.

$\mc{G}$ may be a grid graph (with self-loops) as in low-level image processing or a less structured graph, with arbitrary connectivity in terms of the neighborhoods
\begin{equation}\label{eq:def-neighborhood-i}
\mc{N}_{i} = \{k \in \mc{V} \colon ik=ki \in \mc{E}\} \cup \{i\},\quad i \in \mc{V},
\end{equation}
where $ik$ is a shorthand for the undirected edge $\{i, k\} \in \mc{E}$. We require these neighborhoods to satisfy the relations
\begin{equation}\label{eq:Ni-Nk}
k \in \mc{N}_{i} \quad\gdw\quad i \in \mc{N}_{k},\qquad\forall i,k \in \mc{I}.
\end{equation}
We associate with each neighborhood $\mc{N}_{i}$ from \eqref{eq:def-neighborhood-i} weights $\omega_{ik} \in \R$ for all $k \in \mc{N}_i$, satisfying
\begin{equation}\label{eq:def-weights}
\omega_{ik} > 0 \quad \text{and} \quad \sum_{k \in \mc{N}_{i}} \omega_{ik} = 1, \quad\text{for all} \; i \in \mc{V}.
\end{equation}
These weights parametrize the regularization property of the assignment flow below. Learning these weights from given data is the subject of the remainder of this paper.

\subsection{Assignment Manifold}
\label{sec:AM}

The probabilistic assignment of labels $\mc{X}$ at one node $i \in \mc{V}$ are represented by the manifold of discrete probability distributions with full support
\begin{equation}\label{eq:def-S}
  \mc{S}_n := \{p \in \Delta_{n} \colon p > 0\}
\end{equation}
with constant tangent space for all $p \in \mc{S}_n$
\begin{equation}
  T_p \mc{S}_n = \{v \in \R^{n} \colon \la \eins, v \ra = 0\} =: T_n.
\end{equation}
Throughout this paper, we only work with $T_n$. The probability space $\mc{S}$ is turned into a Riemannian manifold $(\mc{S}_n, g)$ by equipping it with the Fisher-Rao (information) metric
\begin{equation}\label{eq:def-gp-T0}
g_{p}(u,v) := \sum_{j \in [n]} \frac{u_{j} v_{j}}{p_{j}},
\end{equation}
with $u, v \in T_n$ and $p \in \mc{S}_n$.
Furthermore, we have the uniform distribution of labels
\begin{equation}\label{eq:barycenter-S}
\BS{n} := \frac{1}{n}\eins_{n} \in \mc{S}_n,
\qquad(\textit{barycenter})
\end{equation}
and the orthogonal projection onto the tangent space with respect to the standard Euclidean structure of $\R^n$
\begin{equation}\label{eq:def-Pi0}
\PT_n \colon \R^{n} \to T_n,\quad
\PT_n := I - \eins_{\mc{S}_n}\eins^{\T}
\end{equation}
with $\ker(\PT_n) = \R \eins_n$. The \textit{replicator operator} for $p \in \mc{S}_n$ is given by the linear map
\begin{equation}\label{eq:def-Rp}
\RO_{p} \colon \R^{n} \to T_n,\quad
\RO_{p} := \Diag(p)-p p^{\T},
\end{equation}
satisfying
\begin{equation}\label{eq:Rp-Pi0}
\RO_{p} = \RO_{p} \PT_n = \PT_n \RO_{p}.
\end{equation}
The \textit{Riemannian gradient} of a smooth function $f \colon \mc{S}_n \to \R$ is denoted by $\Rgrad f \colon \mc{S}_n \to T_n$ and relates to the Euclidean gradient $\Egrad f$ by \cite[Prop.~1]{Astroem2017} as
\begin{equation}\label{eq:relation_Rgrad_and_Egrad}
  \Rgrad f(p) = \RO_p \Egrad f(p)\qquad \text{for } p \in \mc{S}_n.
\end{equation}

Adopting the $\alpha$-connection with $\alpha=1$, also called \textit{e-connection}, from information geometry \cite[Section 2.3]{Amari:2000aa}, \cite{Ay:2017aa}, the exponential map based on the corresponding affine geodesics reads
\begin{subequations}
\begin{align}
\begin{split}\label{eq:Exp-e}
  \Exp &\colon \mc{S}_n \times T_n \to \mc{S}_n,\\
   (p,v) &\mapsto \Exp_{p}(v) = \frac{pe^{\frac{v}{p}}}{\la p, e^{\frac{v}{p}}\ra}
  \end{split}\\
  \begin{split}\label{eq:def-invExp-e}
  \Exp^{-1} &\colon \mc{S}_n \times \mc{S}_n \to T_n, \\
   (p,q) &\mapsto \Exp_{p}^{-1}(q) = \RO_{p}\log\frac{q}{p}.
    \end{split}
\end{align}
\end{subequations}
Specifically, we define for all $p \in \mc{S}_n$
\begin{subequations}
\begin{align}
\begin{split}\label{eq:def-exp}
  \exp_{p} &\colon T_n \to \mc{S}_n,\\
   z &\mapsto \Exp_{p} \circ \RO_{p}(z) = \frac{p e^{z}}{\la p, e^{z}\ra},
  \end{split}\\
  \exp_{p}^{-1} &\colon \mc{S}_n \to T_n,\quad q \mapsto \PT_n\log\frac{q}{p}.\label{eq:def-invexp}
\end{align}
\end{subequations}
Applying the map $\exp_{p}$ to a vector in $\R^{n} = T_n \oplus \R\eins_n$ does not depend on the constant component of the argument, due to \eqref{eq:Rp-Pi0}.

\begin{remark}\label{rem:Exp-map}
The map $\Exp$ corresponds to the e-connection of information geometry \cite{Amari:2000aa}, rather than to the exponential map of the Riemannian connection. Accordingly, the affine geodesics \eqref{eq:Exp-e} are not length-minimizing with respect to the Riemannian structure. But locally, they provide a close approximation \cite[Prop.~3]{Astroem2017} and are more convenient for numerical computations.
\end{remark}

Global label assignments on the whole set of nodes $\mc{V}$ are represented as points on the \textit{assignment manifold}, given by the product
\begin{equation}
\mc{W} := \mc{S}_n \times \dotsb \times \mc{S}_n \qquad(m = |\mc{V}|\; \text{times})
\end{equation}
with constant \textit{tangent space}
\begin{equation}
\mc{T}_{\mc{W}} := T_n \times \dotsb \times T_n \qquad(m = |\mc{V}|\; \text{times})
\end{equation}
and Riemannian structure $(\mc{W}, g)$ given by the Riemannian product metric. We identify $\mc{W}$ with the embedding into $\R^{m\times n}$
\begin{equation}\label{eq:mcW-matrix-embed}
\begin{split}
  &\mc{W} = \{ W \in \R^{m\times n} \colon W\eins_n = \eins_m \text{ and } W_{ij} > 0\\
  &\qquad \quad\text{for all } i\in[m], \; j\in [n]\}.
  \end{split}
\end{equation}
Thus, points $W \in \mc{W}$ are row-stochastic matrices $W \in \R^{m \times n}$ with row vectors $W_{i} \in \mc{S}_n,\; i \in \mc{V}$ representing the label assignments for every $i \in \mc{V}$. Due to this embedding of $\mc{W}$, the tangent space $\mc{T}_{\mc{W}}$ can be identified with
\begin{equation}\label{eq:mcT-matrix-embed}
  \mc{T}_{\mc{W}} = \{ V \in \R^{m\times n} \colon V\eins_n = 0\}
\end{equation}
and therefore for $V \in \mc{T}_{\mc{W}}$ every row vector $V_i$ is contained in $T_n$ for every $i\in \mc{V}$. The global uniform distribution, given by the uniform distribution in every row, again called \textit{barycenter}, is denoted by
\begin{equation}
\BW := (\eins_{\mc{S}_n},\dotsc,\eins_{\mc{S}_n}) = \eins_m \BS{n}^\T \in \mc{W},
\end{equation}
where the second equality is due to the embedding \eqref{eq:mcW-matrix-embed}. The mappings \eqref{eq:def-Pi0}-\eqref{eq:Exp-e} naturally extend to the assignment manifold $\mathcal{W}$
\begin{subequations}\label{eq:assignment-flow-maps}
\begin{align}
  \PT[Z] &= \big(\PT_n[Z_{1}],\dotsc,\PT_n[Z_{m}]\big)^\T \in \mc{T}_{\mc{W}}, \label{eq:Pi0-W} \\
  \RO_{W}[Z] &= \big(\RO_{W_{1}}[Z_{1}],\dotsc, \RO_{W_{m}}[Z_{m}]\big)^\T \in \mc{T}_{\mc{W}}, \label{eq:R-W} \\
  \Exp_{W}(V)
  &= \big(\Exp_{W_{1}}(V_{1}),\dotsc,\Exp_{W_{m}}(V_{m})\big)^\T \in \mc{W},\label{eq:Exp-e-W}
\end{align}
\end{subequations}
with $W\in \mc{W}$, $Z \in \R^{m \times n}$ and $V \in \mc{T}_{\mc{W}}$. The maps $\exp_{W}, \Exp^{-1}_{W}, \exp^{-1}_{W}$ are similarly defined based on \eqref{eq:def-exp}, \eqref{eq:def-invExp-e} and \eqref{eq:def-invexp}. Due to \eqref{eq:relation_Rgrad_and_Egrad} , the Riemannian gradient and the Euclidean gradient of a smooth function $f \colon \mc{W} \to \R$ are also related by
\begin{equation}\label{eq:relation_Rgrad_and_Egrad-W}
  \Rgrad f(W) = \RO_W[\partial f(W)]\quad \text{for } W \in \mc{W}.
\end{equation}

\subsection{Assignment Flow}
\label{sec:AF}

Based on the given data \eqref{eq:def-FI} and labels \eqref{eq:def-GJ}, the $i$-th row of the \textit{distance matrix} $D \in \R^{m \times n}$ is defined by
\begin{equation}\label{eq:def-distance-matrix}
  D_{i} :=\big(d(f_{i},\ell_{1}),\dotsc,d(f_{i}, \ell_{n})\big)^{\T}\in \R^n,
\end{equation}
for all $i \in \mc{V}$. This distance information is \textit{lifted} onto the manifold by the following \textit{likelihood matrix}
\begin{subequations}\label{eq:def-LW}
\begin{align}
L(W) :=& \exp_{W}(-D/\rho) \in \mc{W},
\\ \label{eq:def-LiWi}
L_{i}(W_{i})
=& \frac{W_{i} e^{-\frac{1}{\rho} D_{i}}}{\la W_{i},e^{-\frac{1}{\rho} D_{i}} \ra},\quad \rho > 0,\quad i \in \mc{V},
\end{align}
\end{subequations}
where $\rho>0$ is a scaling parameter to normalize the a-priori unknown scale of the distances induced by the features $f_{i}$ depending on the application at hand. This representation of the data is regularized by weighted geometric averaging in the local neighborhoods \eqref{eq:def-neighborhood-i} using the weights \eqref{eq:def-weights}, to obtain the \textit{similarity matrix} $S(W) \in \mc{W}$, with $i$-th row defined by
\begin{equation}\label{eq:def-SW}
\begin{split}
&S_i \colon \mc{W} \to \mc{S}_n,\\
&S_{i}(W) := \Exp_{W_{i}} \Big(\sum_{k \in \mc{N}_{i}} w_{ik} \Exp_{W_{i}}^{-1}(L_{k}(W_k))\Big).
\end{split}
\end{equation}
If $\Exp_{W_{i}}$ were the exponential map of the Riemannian (Levi-Civita) connection, then the sum inside the outer brackets of the right-hand side in \eqref{eq:def-SW} would just be the negative Riemannian gradient with respect to $W_{i}$ of the objective function used to define the Riemannian center of mass, i.e.~the weighted sum of the squared Riemannian distances between $W_{i}$ and $L_{k}$ \cite[Lemma 6.9.4]{Jost:2017aa}. In view of Remark \ref{rem:Exp-map}, this interpretation is only approximately true mathematically, but still correct informally: $S_{i}(W)$ moves $W_{i}$ towards the normalized geometric mean of the likelihood vectors $L_{k},\,k \in \mc{N}_{i}$.

The similarity matrix induces the \textit{assignment flow} through a system of spatially coupled nonlinear ODEs which evolves the assignment vectors
\begin{subequations}\label{eq:assignment-flow}
\begin{align}
  \dot W &= R_{W}S(W),\quad W(0)=\eins_{\mc{W}}, \label{eq:assignment-flow-a} \\
  \dot W_{i} &= R_{W_{i}} S_{i}(W),\quad W_{i}(0)=\eins_{\mc{S}_n}.\quad i \in \mc{V},\label{eq:assignment-flow-b}
\end{align}
\end{subequations}
Integrating this flow numerically \cite{Zeilmann:2018aa} yields curves $W_{i}(t) \in \mc{S}_n$ for every pixel $i \in \mc{V}$ emanating from $W_{i}(0)=\eins_{\mc{S}_n}$, which approach some vertex (unit vector) of $\ol{\mc{S}_n}= \Delta_n$ and hence a unique label assignment after a trivial rounding $W_{i}(t)$ for sufficiently large $t > 0$.

\subsection{Linear Assignment Flow}
The \textit{linear assignment flow}, introduced by \cite{Zeilmann:2018aa}, uses the exponential map with respect to the e-connection \eqref{eq:Exp-e} in order to approximate the mapping \eqref{eq:def-SW} as part of the assignment flow \eqref{eq:assignment-flow-a} by
\begin{subequations}\label{eq:def-laf}
\begin{align}
\dot W &= R_{W}\Big[S(W_0)+dS(W_0)\big[\Exp_{W_0}^{-1}(W)\big]\Big], \\
W_{0}&=W(0)=\eins_{\mc{W}} \in \mc{W}.
\end{align}
\end{subequations}
This linear assignment flow \eqref{eq:def-laf} is still \textit{nonlinear} but admits the following  parametrization \cite[Prop.~4.2]{Zeilmann:2018aa}
\begin{subequations}\label{eq:laf}
\begin{align}
W(t) &= \Exp_{W_{0}}\big(V(t)\big),\\
\dot V(t) &= R_{W_{0}}\big[S(W_0) + dS(W_0)[V(t)]\big],\\
 V(0)&=0,
\end{align}
\end{subequations}
where the latter ODE is \textit{linear} and defined on the vector space $\mc{T}_{\mc{W}}$. Fixing $S(W_0)$ in the following, \eqref{eq:laf} is \textit{linear} with respect to both the tangent vector $V$ and the parameters $\omega_{ik}$ in the differential $dS(W_0)$ (see \eqref{eq:dSW0} and Remark~\ref{rem:alt-laf} below), that makes this approach attractive for parameter estimation.

It can be shown that $S_i(W)$ from \eqref{eq:def-SW} can equivalently be expressed with $\exp_{\BS{n}}$ as
\begin{equation}
  S_i(W) = \exp_{\BS{n}}\Big(\sum_{k \in \mc{N}_{i}} \omega_{ik} \Big(\exp^{-1}_{\BS{n}}(W_{k}) - \frac{1}{\rho} D_k\Big)\Big) 
\end{equation}
for all $i \in \mc{V},\; W\in \mc{W}$.
A standard calculation shows that the $i$-th component of the differential\\ $dS(W) \colon \mc{T}_{\mc{W}} \to \mc{T}_{\mc{W}}$ is given by
\begin{equation}\label{eq:dSW0}
\begin{split}
 &dS_i(W) \colon \mc{T}_{\mc{W}} \to T_n,\\
&dS_i(W)[V] = \sum_{k \in \mc{N}_i} \omega_{ik} \RO_{S_i(W)} \left[ \frac{V_k}{W_k} \right]
  \end{split}
\end{equation}
for all $V\in \mc{T}_0,\; i \in \mc{V}$.

\subsection{Numerical Integration of the Flow}\label{subsec:numeric-integration-Amfld}
Setting $\Lambda(V, W) := \exp_{W}(V)$ gives an action $\Lambda\colon \mc{T}_{\mc{W}} \times \mc{W} \to \mc{W}$ of the vector space $\mc{T}_{\mc{W}}$ viewed as an additive group on the assignment manifold $\mc{W}$. In \cite{Zeilmann:2018aa} this action is used to numerically integrate the assignment flow by applying geometric Runge-Kutta methods. The resulting method for an  arbitrary vector field $F \colon \mc{W} \to \mc{T}_{\mc{W}}$ is as follows. Suppose, the ODE
\begin{equation}\label{eq:general_ODE_W}
  \dot{W}(t) = \RO_{W(t)} [F(W(t)],\quad W(0) = \BW
\end{equation}
on the assignment manifold is given. Then the pa\-ra\-metrization $W(t) = \exp_{\BW}(V(t))$ yields an equivalent reparametrized ODE
\begin{subequations}\label{eq:general_ODE_V}
\begin{align}
  \dot{V}(t) &= F(W(t)) = F\big(\exp_{\BW}(V(t)\big), \\ 
  V(0) &= 0
\end{align}
\end{subequations}
purely evolving on the vector space $\mc{T}_{\mc{W}}$, where standard Runge-Kutta methods (cf.~Section~\ref{sec:Background}) can now be used for numerical integration. Translating these update schemes back onto $\mc{W}$, yields geometric Runge-Kutta methods on $\mc{W}$ induced by the Lie-group action $\Lambda = \exp$.
\begin{remark}
  Notice, that the assumption $F(W) \in \mc{T}_{\mc{W}}$ is crucial because the transformation of the ODE \eqref{eq:general_ODE_W} onto $\mc{T}_{\mc{W}}$ in \eqref{eq:general_ODE_V} uses the inverse of $\RO_{W}$, which only exists for elements of $\mc{T}_{\mc{W}}$ but not for $\R^{m\times n}$.
  However, this is no limitation.\\ Suppose any vector field $\tilde{F} \colon \mc{W} \to \R^{m\times n}$ is given. Due to $\RO_{W} = \RO_{W}\circ \PT$ by \eqref{eq:Rp-Pi0}, we may consider $F(W) := \PT[\tilde{F}(W)] \in \mc{T}_{\mc{W}}$ instead, without changing the underlying ODE \eqref{eq:general_ODE_W} for $W(t)$.
\end{remark}

In the following, we mainly use the Euler method to numerically integrate the flow \eqref{eq:general_ODE_V} on the vector space $\mc{T}_{\mc{W}}$, i.e.
\begin{equation}\label{eq:Euler-ODE-V}
\begin{split}
  V^{(k+1)} &= V^{(k)} + h_k F\big(W^{(k)}\big), \quad V^{(0)} = 0,\\
  W^{(k)} &= \exp_{\BW}\big(V^{(k)}\big)
  \end{split}
\end{equation}
with step-size $h_k > 0$. Due to the Lie-group action, this update scheme translates to the geometric Euler integration on $\mc{W}$ given by
\begin{subequations}\label{eq:geomEuler-ODE-W}
\begin{align}
  W^{(k+1)} &= \exp_{W^{(k)}}\Big(h_k F\big(W^{(k)}\big)\Big),\\
   W^{(0)} &= \BW,
\end{align}
\end{subequations}
with step-size $h_k > 0$.

\section{Learning Adaptive Regularization Parameters}
\label{sec:Adaptive-Learning-for-Dynamical-Systems}

In this section, we study the parameter learning approach \eqref{eq:parameter-problem-modified-laf}, which is a specific instance of the general formulation \eqref{eq:parameter-problem-general}. 
The goal is to adapt the regularization of the linear assignment flow \eqref{eq:def-laf} on the fixed time horizon $[0, T]$ controlled by the weights \eqref{eq:def-weights}, in the following collectively denoted by $\Omega$, so as to preserve important image structure in a supervised manner. During learning, the image structure is prescribed by given ground truth labeling information $W^{\ast}$, where every row $W^\ast_i$ is some unit basis vector $e_{k_i}$ of $\R^n$ representing the ground truth label $l_{k_i}$ at node $i\in \mc{V}$. The adaptivity of the weights with respect to the desired image structure is measured by $\mc{C}$ in terms of the discrepancy  between ground truth $W^\ast$ and the labeling induced by $V(T) = V(T, \Omega)$ at fixed time $T$. The corresponding optimization problem reads
\begin{subequations}\label{eq:parameter-problem-modified-laf}
\begin{align}
\min_{\Omega \in \mc{P}}\quad & \mc{C}\big( V(T, \Omega) \big ) \label{eq:objective-modified-laf}\\
\text{s.t.}\quad & \dot V(t) =F(V(t), \Omega),\quad t \in [0,T], \label{eq:constrained-modified-laf}\\
&V(0) = 0,
\end{align}
\end{subequations}
with components\\[2mm]
\begin{tabular}{p{0.1\textwidth} p{0.9\textwidth}}
$\mathcal{P}$   & parameter manifold, representing the weights $\omega_{ik}$ from \eqref{eq:def-weights}, see Section~\ref{subsec:param-mdfl}.\\
$F(V, \Omega)$  & modified version of the linear assignment flow \eqref{eq:modified-laf}, see Section~\ref{subsec:modified-laf}.
\\
$\mc{C}$ & a objective function measuring the discrepancy to the ground truth, see Section~\ref{subsec:obj-func-C}.
\end{tabular}\\[2mm]
It is important to note that the dependency of $\mc{C}(V(T, \Omega))$ on the weights $\Omega$ is only \textit{implicitly} given through the solution $V(T) = V(T, \Omega)$ of \eqref{eq:constrained-modified-laf}. In Section~\ref{subsec:num-optimization} we therefore present a numerical first-order scheme for optimizing \eqref{eq:parameter-problem-modified-laf} where the gradient of $\mc{C}(V(T, \Omega))$ with respect to the parameter $\Omega$ is calculated using the sensitivity analysis from Section~\ref{sec:Background}.

\subsection{Parameter Manifold}\label{subsec:param-mdfl}
In the following, we define the parameter manifold representing the weights $\omega_{ik}$ from \eqref{eq:def-weights} associated to the neighborhood $\mc{N}_i$, $i\in\mc{V}$. Based on this parametrization, we can compute the differential $dS(W_0)$ and thus describe the linear assignment flow \eqref{eq:laf} on the tangent space by a corresponding expression in Lemma~\ref{lem:alt-laf} below.

To simplify the exposition, we assume that all neighborhoods $\mc{N}_i$ have the same size
\begin{equation}
  N := |\mc{N}_i| \quad \text{for all } i \in \mc{V}.
\end{equation}
Due to the constraints \eqref{eq:def-weights}, the weight vector $\Omega_i := (\omega_{i1}, \ldots, \omega_{iN})^\T$ can be viewed as a point in $\mc{S}_N$. Accordingly, we define the \textit{parameter manifold} 
\begin{equation}
  \mc{P} := \mc{S}_N \times \ldots \times \mc{S}_N\qquad (m = |\mc{V}|\; \text{times})
\end{equation}
as feasible set for learning the weights, 
which has the form of an assignment manifold and thus also has a Riemannian structure $(\mc{P}, g)$, given by the Fisher-Rao metric. We use the identification
\begin{equation}
\begin{split}
  &\mc{P} = \{ \Omega \in \R^{m \times N} \colon \Omega \eins_N = \eins_m \text{ and } \Omega_{ik} > 0 \text{for all } i\in[m], k \in [N]\}.
\end{split}
\end{equation}
Points $\Omega \in \mc{P}$ now represent the global choice of weights with $\Omega_i$ representing the weights $\omega_{ik}$ associated to the neighborhood $\mc{N}_i$ in \eqref{eq:def-weights}. The constant tangent space of $\mc{P}$ is denoted by $\mc{T}_{\mc{P}}$ and the corresponding orthogonal projection by 
\begin{equation}
\begin{split}
  &\PT_{\mc{P}} \colon \R^{m\times N} \to \mc{T}_{\mc{P}},\quad M \mapsto \PT_{\mc{P}}[M] = (\PT_N[M_1], \ldots, \PT_N[M_m])^\T.
   \end{split}
   \end{equation}

Next, we give a global expression for the differential $dS(W)$ which will simplify following formulas and calculations. For this, we define the \textit{averaging matrix} $A_\Omega \in \R^{m\times m}$ with weights $\Omega \in \mc{P}$ by
\begin{equation}\label{eq:def-A-Omega-ik}
  (A_\Omega)_{ik} := \delta_{k\in\mc{N}_i} \Omega_{ik} = \begin{cases}
                                                           \Omega_{ik} &\text{, for } k\in\mc{N}_i\\
                                                           0 &\text{, else,}
                                                         \end{cases}
\end{equation}
where $\delta_{k\in\mc{N}_i}$ is the Kronecker-Delta with value $1$ if $k \in\mc{N}_i$ and $0$ otherwise. We observe that the averaging matrix $A_\Omega$  \textit{linearly} depends on the weight parameters. 

Thus, $A_\Omega$ parametrizes averages depending on the corresponding weights $\Omega$ with respect to the underlying graph structure, given by the neighborhoods \eqref{eq:def-neighborhood-i}. For a matrix $M \in \R^{m \times n}$, the averages of the row vectors with weights $\Omega$ are then just given by the matrix multiplication $A_\Omega M$, with the $i$-th row vector given by
\begin{equation}
  (A_\Omega M)_i = \sum_{k\in\mc{N}_i} \omega_{ik} M_k.\quad \text{for all } i\in\mc{V}.
\end{equation}
For later use, we record the following formula for the adjoint of $A_\Omega$ as a linear map with respect to $\Omega$.
\begin{lemma}\label{lem:adjoint-A}
  If the averaging matrix is viewed as a linear map $A \colon \R^{m\times N} \to \R^{m\times m}$, $\Omega \mapsto A_\Omega$, then the adjoint map $A^\T \colon \R^{m\times m} \to \R^{m\times N}$, $B \mapsto A_B^\T$ is given by
\begin{equation}\label{eq:adjoint-A}
  \big(A_B^\T)_{ij} = B_{ij}\quad\text{for } i \in \mc{V}, j\in \mc{N}_{i}.
\end{equation}

\begin{proof}
  For arbitrary $B \in \R^{m\times m}$ and $\Omega \in \R^{m\times N}$, we obtain $\la A_\Omega, B\ra = \sum_{i, j\in\mc{V}} \delta_{j\in\mc{N}_i} \Omega_{ik} B_{ik} = \la \Omega, A_B^\T\ra$ due to \eqref{eq:def-A-Omega-ik}.
\end{proof}
\end{lemma}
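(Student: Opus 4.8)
The plan is to verify the defining property of the adjoint directly. Recall that, with respect to the Frobenius inner products on $\R^{m\times m}$ (the domain of $B$) and on $\R^{m\times N}$ (the domain of $\Omega$), the adjoint $A^\T$ is the unique linear map $\R^{m\times m}\to\R^{m\times N}$, $B\mapsto A_B^\T$, satisfying $\la A_\Omega, B\ra = \la \Omega, A_B^\T\ra$ for all $\Omega$ and $B$. So it suffices to take the candidate in \eqref{eq:adjoint-A}, check this identity, and invoke uniqueness of the adjoint.

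First I would expand the left-hand side using the definition \eqref{eq:def-A-Omega-ik} of the averaging matrix:
\[
  \la A_\Omega, B\ra = \sum_{i\in\mc{V}}\sum_{k\in\mc{V}} (A_\Omega)_{ik}\,B_{ik} = \sum_{i\in\mc{V}}\sum_{k\in\mc{V}} \delta_{k\in\mc{N}_i}\,\Omega_{ik}\,B_{ik} = \sum_{i\in\mc{V}}\sum_{k\in\mc{N}_i} \Omega_{ik}\,B_{ik}.
\]
The last double sum is, by definition of the Frobenius inner product on $\R^{m\times N}$, exactly $\la \Omega, M\ra$, where $M\in\R^{m\times N}$ is the matrix whose $i$-th row collects the entries $B_{ij}$, $j\in\mc{N}_i$; equivalently $M_{ij} = B_{ij}$ for $j\in\mc{N}_i$. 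Since $|\mc{N}_i| = N$ for every $i$, fixing an ordering of $\mc{N}_i$ identifies these with the $N$ columns of row $i$, so $M$ is well defined, and comparison with the statement gives $M = A_B^\T$. Hence $\la A_\Omega, B\ra = \la \Omega, A_B^\T\ra$ for all $\Omega$ and $B$, which by uniqueness of the adjoint establishes \eqref{eq:adjoint-A}.

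There is no genuine obstacle here; the computation is a one-line bilinear identity. The only point meriting attention is the bookkeeping between the column index set $[N]$ of $\R^{m\times N}$ and the vertex set $\mc{N}_i$: one must fix, once and for all, an ordering of each neighborhood $\mc{N}_i$ and use it consistently in \eqref{eq:def-A-Omega-ik} and in \eqref{eq:adjoint-A}. It is also worth noting that the off-pattern entries $B_{ik}$ with $k\notin\mc{N}_i$ never enter the pairing, reflecting that $A$ factors through the sparsity pattern encoded by the neighborhoods \eqref{eq:def-neighborhood-i}.
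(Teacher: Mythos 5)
Your proposal is correct and follows essentially the same route as the paper's one-line proof: expand $\la A_\Omega, B\ra$ via \eqref{eq:def-A-Omega-ik} and read off the adjoint from the resulting bilinear identity. Your added remarks on the ordering of $\mc{N}_i$ versus $[N]$ and on the off-pattern entries of $B$ are sensible bookkeeping but do not change the argument.
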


Using $A_\Omega$ with $\Omega \in \mc{P}$, it follows from \eqref{eq:dSW0} that $dS(W)$ can be expressed as
\begin{equation}\label{eq:global-dSW}
  dS(W)[V] = \RO_{S(W)} \left[A_\Omega\left(\frac{V}{W}\right)\right],
\end{equation}
for all $V \in \mc{T}_{\mc{W}},\; W \in \mc{W}$. As a result, the linear assignment flow \eqref{eq:laf} on the vector space $\mc{T}_{\mc{W}}$ can be parametrized as follows.

\begin{lemma}\label{lem:alt-laf}
  Using the parametrization $\ol{V} := n V$, the linear assignment flow \eqref{eq:laf} takes the form
  \begin{subequations}\label{eq:alt-laf}
  \begin{align}
W(t) &= \exp_{\BW}(\ol{V}(t)),\\
\dot{\ol{V}}(t) &= \PT[ S(W_0)] + R_{S(W_0)}[A_\Omega \ol{V}(t)],\\
\ol{V}(0) &= 0.
\end{align}
 \end{subequations}
\begin{proof}
  At $p = \BS{n}$, the linear map \eqref{eq:Rp-Pi0} takes the form
  \begin{align*}
    \RO_\BS{n} &= \Diag(\BS{n}) - \BS{n}\BS{n}^\T = \frac{1}{n}\big(I - \BS{n}\eins^\T\big) = \frac{1}{n}\PT_n,
  \end{align*}
  where $I \in \R^{n\times n}$ denotes the identity matrix. Because of $W_0 = \BW$, $\RO_{W_0} = \frac{1}{n}\PT$ follows. Therefore, $V = \frac{1}{n} \ol{V} = \RO_{\BW}\ol{V}$ which directly yields
  \begin{align*}
    W &= \Exp_{\BW}(V) = \Exp_{\BW}(\RO_{\BW}[\ol{V}]) = \exp_{\BW}(\ol{V}).
  \end{align*}
  Using \eqref{eq:global-dSW} together with $\frac{V}{W_0} = n V = \ol{V}$, the linear assignment flow \eqref{eq:laf} takes the form
  \begin{align*}
    \dot{V}(t) &= R_{W_{0}}\left[S(W_0) + \RO_{S(W_0)} \left[A_\Omega\left(\frac{V(t)}{W_0}\right)\right]\right] = \frac{1}{n}\PT\big[S(W_0) + \RO_{S(W_0)} [A_\Omega \ol{V}(t)]\big].
  \end{align*}
  As a consequence of $\PT \RO_{S(W_0)} = \RO_{S(W_0)}$ by \eqref{eq:Rp-Pi0}, the right-hand side of \eqref{eq:alt-laf} follows after multiplying the equation by $n$ and using $n\dot{V} = \dot{\ol{V}}$.
\end{proof}
\end{lemma}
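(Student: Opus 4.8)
The plan is to verify the reparametrized system in Lemma~\ref{lem:alt-laf} by substituting $\ol{V} = nV$ into \eqref{eq:laf} and simplifying, using the two structural facts that hold specifically at the barycenter $W_0 = \BW$: first, that $\RO_{\BW} = \tfrac{1}{n}\PT$, and second, that $\PT\RO_{S(W_0)} = \RO_{S(W_0)}$ by the idempotence-type identity \eqref{eq:Rp-Pi0}. So the first step would be to compute $\RO_{\BS{n}}$ explicitly at $p = \BS{n} = \tfrac{1}{n}\eins_n$: using \eqref{eq:def-Rp}, $\RO_{\BS{n}} = \Diag(\BS{n}) - \BS{n}\BS{n}^\T = \tfrac{1}{n}I - \tfrac{1}{n^2}\eins_n\eins_n^\T = \tfrac{1}{n}(I - \BS{n}\eins_n^\T) = \tfrac{1}{n}\PT_n$ by \eqref{eq:def-Pi0}. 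Taking the product over all $m$ rows gives $\RO_{W_0} = \RO_{\BW} = \tfrac{1}{n}\PT$ on $\mc{T}_{\mc{W}}$.

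Next I would establish the relation between the old and new variables. Since $V = \RO_{\BW}[\ol V] = \tfrac{1}{n}\ol V$, the state recovery formula transforms as $W = \Exp_{\BW}(V) = \Exp_{\BW}(\RO_{\BW}[\ol V]) = \exp_{\BW}(\ol V)$, where the last equality is exactly the definition \eqref{eq:def-exp} of $\exp_p = \Exp_p \circ \RO_p$ extended to $\mc{W}$. For the vector-field equation, the key observation is that the argument $\tfrac{V}{W_0}$ appearing in \eqref{eq:global-dSW} equals $\tfrac{V}{\BW} = nV = \ol V$ componentwise, since dividing by the barycenter (all entries $\tfrac{1}{n}$) multiplies by $n$. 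Plugging \eqref{eq:global-dSW} into \eqref{eq:laf} gives $\dot V = \RO_{W_0}\big[S(W_0) + \RO_{S(W_0)}[A_\Omega \ol V]\big] = \tfrac{1}{n}\PT\big[S(W_0) + \RO_{S(W_0)}[A_\Omega \ol V]\big]$. Multiplying through by $n$ and using $n\dot V = \dot{\ol V}$ yields $\dot{\ol V} = \PT[S(W_0)] + \PT\RO_{S(W_0)}[A_\Omega \ol V]$, and finally $\PT\RO_{S(W_0)} = \RO_{S(W_0)}$ by \eqref{eq:Rp-Pi0} collapses the second term to $\RO_{S(W_0)}[A_\Omega \ol V]$, which is precisely the claimed equation. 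The initial condition $\ol V(0) = n V(0) = 0$ is immediate.

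This is essentially a bookkeeping argument, so I do not anticipate a genuine obstacle; the only point requiring a little care is keeping track of where the matrix $\PT$ survives versus where it is absorbed into $\RO_{S(W_0)}$ — the first occurrence $\PT[S(W_0)]$ must be retained because $S(W_0) \in \mc{W}$ need not lie in the tangent space, whereas the second is redundant. I would also make sure the componentwise-division conventions from the ``Basic Notation'' paragraph are invoked correctly when writing $\tfrac{V}{\BW} = nV$, since that identity is what makes the linear term clean. Everything else is direct substitution using \eqref{eq:def-Rp}, \eqref{eq:Rp-Pi0}, \eqref{eq:def-exp}, \eqref{eq:global-dSW}, and the already-proved Lemma-level expression \eqref{eq:laf}.
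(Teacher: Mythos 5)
Your proposal is correct and follows essentially the same route as the paper's own proof: computing $\RO_{\BS{n}}=\tfrac{1}{n}\PT_n$, using $V=\RO_{\BW}[\ol V]$ to rewrite the state recovery as $\exp_{\BW}(\ol V)$, noting $\tfrac{V}{W_0}=nV=\ol V$, and absorbing $\PT$ into $\RO_{S(W_0)}$ via \eqref{eq:Rp-Pi0} after multiplying by $n$. The extra remark on why $\PT[S(W_0)]$ must be retained while the second projector is redundant is a correct and welcome clarification.
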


\begin{remark}\label{rem:alt-laf}
  To simplify notation, we will write $V$ for $\ol{V}$ below. Equation \eqref{eq:alt-laf} highlights the importance to fix $S(W_0)$ in order to obtain a model that is linear in both the state vector $V$ and the parameters $\Omega$.
\end{remark}

\subsection{Modified Linear Assignment Flow }\label{subsec:modified-laf}

We now return to our objective to estimate the weight parameters $\Omega \in \mc{P}$ controlling the linear assignment flow on the fixed time interval $[0, T]$, in the supervised scenario \eqref{eq:parameter-problem-modified-laf}. \textcolor{black}{In this formulation, the data  represented by the likelihood matrix \eqref{eq:def-LW} only influence the linear assignment flow \eqref{eq:def-laf}, or equivalently \eqref{eq:alt-laf}, through the constant similarity matrix $S(W_0)$ that comprises \textit{averaged} data information depending on the initial choice of the weights $\Omega_0$. However, since the \textit{initial} weights are in general \textit{not} adapted to any specific image structure, this can lead to a loss of desired structural information through $S(W_0)$ at the outset, that cannot be recovered afterwards. }

\textcolor{black}{To avoid this problem, we slightly modify the linear assignment flow in \eqref{eq:alt-laf} to obtain an explicit data term that \textit{does not depend on the choice of the initial} weights. This is done through replacing the constant term $S(W_0)$ by the lifted distances $L(W_0)$, which results in the \textit{modified linear assignment flow}
}
\begin{subequations}\label{eq:modified-laf}
\begin{align}
&W(t) = \exp_{W_0}\big(V(t)\big),\\
\dot V(t) &= \PT[L(W_0)] + R_{S(W_0)}[A_\Omega V(t)]=: F(V(t), \Omega),V(0)=0.
\end{align}
\end{subequations}
\begin{remark}
  We point out that, strictly speaking, that the similarity matrix $S(W_0)$ is involved in two ways, in the constant term of \eqref{eq:alt-laf} and in the expression $\RO_{S(W_0)}$ of the differential $dS(W_0)$ (cf.~\eqref{eq:global-dSW}). However, the effect of the latter with respect to the initial weights is negligible, and the former appearance only causes the above mentioned loss of initial data information. We note again that \eqref{eq:modified-laf} is \textit{linear} with respect to both the tangent vector $V$ and the parameters $\Omega$ only if $S(W_0)$ is kept constant.
\end{remark}

\begin{proposition}\label{prop:dF-and-dFT}
The differential of the map $F \colon \mc{T}_{\mc{W}} \times \mc{P} \to \mc{T}_{\mc{W}}$ on the right-hand side of \eqref{eq:modified-laf} with respect to the first and second argument are given by
\begin{subequations}
\begin{align}
\begin{split}
&d_VF(V, \Omega) \colon \mc{T}_{\mc{W}} \to \mc{T}_{\mc{W}},\\
&X \mapsto d_V F(V, \Omega)[X] = R_{S(W_0)}[ A_\Omega X],\\ 
\end{split}
\\[2ex]
\begin{split}
&d_\Omega F(V, \Omega) \colon \mc{T}_{\mc{P}} \to \mc{T}_{\mc{W}},\\
&\Psi \mapsto d_\Omega F(V, \Omega) [\Psi] = R_{S(W_0)}[A_{\Psi} V].
\end{split}
\end{align}
\end{subequations}
The corresponding adjoint mappings with respect to the standard Euclidean structure of $\R^{m\times n}$ are 
\begin{subequations}
\begin{align}
\begin{split}
&d_VF(V, \Omega)^\T \colon \mc{T}_{\mc{W}} \to \mc{T}_{\mc{W}}, \\
&X \mapsto d_V F(V, \Omega)^\T[X] = A_\Omega^\T R_{S(W_0)}[X],
\end{split}
\\[2ex]
\begin{split}
&d_\Omega F(V, \Omega)^\T \colon \mc{T}_{\mc{W}} \to \mc{T}_{\mc{P}}, \\
&X \mapsto d_\Omega F(V, \Omega)^\T[X] = \PT_{\mc{P}}\big[A_{(R_{S(W_0)}[X]) V^\T}^\T\big],
\end{split}
\end{align}
\end{subequations}
with the adjoint $A^\T_{(\cdot)}$ from Lemma~\ref{lem:adjoint-A}.
\begin{proof}
  Let $V, X \in \mc{T}_{\mc{W}}$ and set $\gamma(t) := V + tX \in \mc{T}_0$ for all $t \in \R$. Then
  \begin{align*}
    &d_VF(V, \Omega)[X] = \frac{d}{dt} F(\gamma(t), \Omega) \big|_{t = 0} = R_{S(W_0)} [A_\Omega \dot{\gamma}(0)] = R_{S(W_0)} [A_\Omega X].
  \end{align*}
  Similarly, for $\Omega \in \mc{P}$ and $\Psi \in \mc{T}_{\mc{P}}$, let $\eta(t) := \Omega + t \Psi \in \mc{P}$ be a curve with $t \in (-\varepsilon, \varepsilon)$ for sufficiently small $\varepsilon>0$. The linearity of the averaging operator $A_\Omega$ with respect to $\Omega$ gives
  \begin{align*}
    &d_\Omega F(V, \Omega)[X] = \frac{d}{dt} F(V, \eta(t)) \big|_{t = 0} = \frac{d}{dt} R_{S(W_0)} [A_{\eta(t)} V] \big|_{t = 0} = R_{S(W_0)} A_\Psi [ V].
  \end{align*}
  We now determine the adjoint differentials. Consider arbitrary  $X,Y \in \mc{T}_{\mc{W}}$ and note that the linear map $R_S(W_0)$ is symmetric, since every component map $R_{S_i(W_0)}$ is symmetric by \eqref{eq:def-Rp}. Thus,
  \begin{align*}
    \la d_V F(V, \Omega)[Y], X\ra &= \la R_{S(W_0)} \left[ A_\Omega Y\right], X\ra = \la Y, A_\Omega^\T R_{S(W_0)}[X]\ra 
  \end{align*}
  and therefore $d_VF(V, \Omega)^\T[X] = A_\Omega^\T R_{S(W_0)}[X]$.
  Now let arbitrary $\Psi \in \mc{T}_{\mc{P}}$ and $X \in \mc{T}_{\mc{P}}$ be given. Then
  \begin{align*}
    &\la d_\Omega F(V, \Omega)[\Psi], X\ra = \la R_{S(W_0)} \left[ A_\Psi V\right], X\ra = \la A_\Psi, (R_{S(W_0)}[X]) V^\T\ra \\
    &= \la \Psi, A_{(R_{S(W_0)}[X]) V^\T}^\T \ra = \la \Psi, \PT_{\mc{P}}\big[A_{(R_{S(W_0)}[X]) V^\T}^\T\big]\ra,
  \end{align*}
  which proves the expression for the corresponding adjoint.
\end{proof}
\end{proposition}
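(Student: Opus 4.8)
The plan is to exploit that the right-hand side of \eqref{eq:modified-laf} is affine in $V$ (the term $\PT[L(W_0)]$ is constant and $V \mapsto R_{S(W_0)}[A_\Omega V]$ is linear) and linear in $\Omega$ (because $\Omega \mapsto A_\Omega$ is linear, as noted below \eqref{eq:def-A-Omega-ik}), so that both differentials are just the corresponding linear parts. First I would verify this by a one-line directional derivative: for $d_V F$, differentiate $t \mapsto F(V + tX, \Omega)$ at $t = 0$, where the constant term drops and linearity of $R_{S(W_0)} \circ A_\Omega$ gives $R_{S(W_0)}[A_\Omega X]$; for $d_\Omega F$, differentiate $t \mapsto F(V, \Omega + t\Psi)$ at $t = 0$ along a curve $\eta(t) = \Omega + t\Psi$ that stays in $\mc{P}$ for small $|t|$ (since $\Psi \in \mc{T}_{\mc{P}}$ has vanishing row sums and $\Omega$ is strictly positive), using $\tfrac{d}{dt} A_{\eta(t)} = A_\Psi$ to obtain $R_{S(W_0)}[A_\Psi V]$.

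For the adjoints I would compute with the Frobenius inner product throughout and rely on three ingredients: (i) each diagonal block $R_{S_i(W_0)} = \Diag(S_i(W_0)) - S_i(W_0) S_i(W_0)^\T$ is a symmetric matrix, so $R_{S(W_0)}$ is self-adjoint on $\R^{m\times n}$; (ii) the matrix identity $\la AB, C\ra = \la A, C B^\T\ra$; and (iii) Lemma~\ref{lem:adjoint-A}, which gives the adjoint of the \emph{other} linear map $\Omega \mapsto A_\Omega$ as $B \mapsto A_B^\T$. For $d_V F^\T$, in $\la R_{S(W_0)}[A_\Omega Y], X\ra$ I would move $R_{S(W_0)}$ across using (i) and then the left-multiplication $A_\Omega$ across as its matrix transpose $A_\Omega^\T$, landing on $A_\Omega^\T R_{S(W_0)}[X]$. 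For $d_\Omega F^\T$, starting from $\la R_{S(W_0)}[A_\Psi V], X\ra$, I would use (i) and then (ii) to rewrite it as $\la A_\Psi, (R_{S(W_0)}[X]) V^\T\ra$, apply Lemma~\ref{lem:adjoint-A} to reach $\la \Psi, A_{(R_{S(W_0)}[X]) V^\T}^\T\ra$, and finally insert $\PT_{\mc{P}}$, which is legitimate because it is self-adjoint and $\Psi$ already lies in $\mc{T}_{\mc{P}}$; this yields the stated formula.

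The hard part will not be any individual calculation but the bookkeeping around the symbol $A$, which carries two unrelated ``transpose'' operations: the matrix transpose $A_\Omega^\T$ of the averaging matrix (the adjoint of left multiplication on row data, needed for $d_V F^\T$) versus the adjoint $A_{(\cdot)}^\T$ of the linear weight-dependence $\Omega \mapsto A_\Omega$ from Lemma~\ref{lem:adjoint-A} (needed for $d_\Omega F^\T$). The crux is to pull the factor $A_\Psi$ out of the correct inner-product slot — as a matrix factor via (ii), or as the image of the weight map via (iii) — which is precisely what forces the rank-one reshaping $(R_{S(W_0)}[X]) V^\T$; once that choice is made consistently, every displayed equality is a single step.
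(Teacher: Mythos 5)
Your proposal is correct and follows essentially the same route as the paper's own proof: directional derivatives along affine curves for the two differentials, then self-adjointness of $R_{S(W_0)}$, the Frobenius identity $\la AB,C\ra=\la A,CB^{\T}\ra$, and Lemma~\ref{lem:adjoint-A} for the adjoints, with $\PT_{\mc{P}}$ inserted harmlessly since $\Psi\in\mc{T}_{\mc{P}}$. Your explicit remark distinguishing the matrix transpose $A_\Omega^{\T}$ from the adjoint $A_{(\cdot)}^{\T}$ of the weight map is a helpful clarification the paper leaves implicit, but it does not change the argument.
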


\subsection{Objective Function}\label{subsec:obj-func-C}
Let $W = \exp_{\BW}(V)\in\mc{W}$ be an assignment induced by $V \in \mc{T}_{\mc{W}}$. Accumulating the 
$\KL$-divergence between the ground truth $W^*_i$ and $W_i$ for every node $i\in \mc{V}$,
\begin{equation}
  \KL(W_i^*, W_i) = \sum_{j\in [n]} W^*_{ij}\log\left( \frac{W^\ast_{ij}}{W_{ij}} \right) = \la W^*_i, \log(W^*_i)\ra - \la W^*_i, \log(W_i)\ra,
\end{equation}
results in a measure of the global deviation between $W$ induced by $V$ and the ground truth $W^*$
\begin{equation}\label{eq:C-KL}
  \mc{C}(V) := \sum_{i\in\mc{V}} \KL(W^*_i, \exp_{\BS{n}}(V_i)) = \la W^{\ast}, \log(W^{\ast}) \ra - \la W^{\ast}, \log \big(\exp_{\eins_{\mc{W}}}(V)\big) \ra.
\end{equation}
\begin{remark}\label{rem:objFunc-implicit-dependence-Omega}
  It is important to note that $\mc{C}$ does \textit{not explicitly} depend on the weights  $\Omega \in\mc{P}$. In problem formulation \eqref{eq:objective-modified-laf}, this dependency  is only given \textit{implicitly} through the evaluation of $\mc{C}$ at $V(T, \Omega)$, where $V(t, \Omega)$ is the object depending on the parameter $\Omega$ as solution of the modified linear assignment flow \eqref{eq:modified-laf}.
\end{remark}

\begin{proposition}\label{prop:Egrad-C}
The \textit{Euclidean gradient} of objective \eqref{eq:C-KL} for fixed $W^*\in \mc{W}$ is given by
\begin{equation}\label{eq:gradient-C-KL}
\Egrad \mc{C}(V) = \exp_{\eins_{\mc{W}}}(V) - W^{\ast} \quad \text{for} \quad V \in \mc{T}_{\mc{W}}.
\end{equation}

\begin{proof}
Let $V\in \mc{T}_{\mc{W}}$. Note that for every $i\in\mc{V}$
\begin{align*}
  \la W_i^*, \log\big(\exp_{\BS{n}}(V_i) \big)\ra
  = \la W_i^*, V_i - \log(\la \eins,e^{V_i}\ra) \eins \ra = \la W_i^*, V_i\ra + \log(\la \eins,e^{V_i}\ra).
\end{align*}
Hence the $\KL$-divergence between $W^*_i$ and the induced assignment $W_i = \exp_{\BS{n}}(V_i)$ takes the form
\begin{align*}
  \KL\big(W_i^*, W_i\big) &= \la W_i^*, \log(W_i^*)\ra - \la W_i^*, V_i\ra + \log(\la \eins,e^{V_i}\ra)
\end{align*}
and results in the following expression for $\mc{C}$ from \eqref{eq:C-KL},
\begin{equation*}
  \mc{C}(V) = \la W^*, \log(W^*)\ra - \la W^*, V\ra + \sum_{i\in[m]} \log(\la \eins,e^{V_i}\ra).
\end{equation*}
Take $X \in \R^{m\times n}$ and set $\gamma(t) := V + tX$ for $t \in \R$. The above formula for $\mc{C}$  then implies
\begin{align*}
  \la \partial C(V), X\ra &= \frac{d}{dt} C(\gamma(t))\big|_{t=0} = - \la W^*, X\ra + \sum_{i\in[m]} \frac{1}{\la \eins, e^{V_i}\ra} \la e^{V_i}, X_i\ra = \la \exp_{\eins_\mc{W}}(V) - W^*, X\ra.
\end{align*}
Since $X \in \R^{m \times n}$ was arbitrary, expression \eqref{eq:gradient-C-KL} follows.
\end{proof}
\end{proposition}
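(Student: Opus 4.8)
The plan is to reduce the computation of $\Egrad\mc{C}(V)$ to an elementary differentiation of a log-sum-exp term by first rewriting $\mc{C}(V)$ as an explicit function on all of $\R^{m\times n}$ (extending formula \eqref{eq:C-KL} beyond $\mc{T}_{\mc{W}}$, which is harmless since the defining expression makes sense there too) and then reading off the gradient from the directional derivative.

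First I would rewrite the term $\la W^{\ast}, \log(\exp_{\eins_{\mc{W}}}(V))\ra$ row by row. Since the base point is the barycenter, \eqref{eq:def-exp} gives $\exp_{\BS{n}}(V_i) = e^{V_i}/\la\eins, e^{V_i}\ra$, hence $\log(\exp_{\BS{n}}(V_i)) = V_i - \log\la\eins, e^{V_i}\ra\,\eins$. Using that each ground-truth row $W_i^{\ast}$ is a probability vector, so $\la W_i^{\ast}, \eins\ra = 1$, this collapses to $\la W_i^{\ast}, \log(\exp_{\BS{n}}(V_i))\ra = \la W_i^{\ast}, V_i\ra - \log\la\eins, e^{V_i}\ra$. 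Summing over $i \in [m]$ and substituting into \eqref{eq:C-KL} yields the closed form
\[
\mc{C}(V) = \la W^{\ast}, \log(W^{\ast})\ra - \la W^{\ast}, V\ra + \sum_{i\in[m]}\log\la\eins, e^{V_i}\ra .
\]
Next I would differentiate this: for arbitrary $X \in \R^{m\times n}$ set $\gamma(t) = V + tX$; the first summand is constant in $V$, the second contributes $-\la W^{\ast}, X\ra$, and the chain rule applied to the log-sum-exp summand contributes $\sum_{i} \la e^{V_i}, X_i\ra / \la\eins, e^{V_i}\ra = \la \exp_{\eins_{\mc{W}}}(V), X\ra$ (Frobenius inner product). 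Adding these gives $\tfrac{d}{dt}\mc{C}(\gamma(t))\big|_{t=0} = \la \exp_{\eins_{\mc{W}}}(V) - W^{\ast}, X\ra$, and since $X$ is arbitrary the claimed identity \eqref{eq:gradient-C-KL} follows.

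I do not expect a genuine obstacle here; the two points needing care are (i) that the object sought is the \emph{full} Euclidean gradient in $\R^{m\times n}$, so $X$ ranges over all of $\R^{m\times n}$ and no tangent-space projection $\PT$ is applied, and (ii) the cancellation $\la W_i^{\ast}, \eins\ra = 1$, which is exactly what leaves the $\log\la\eins, e^{V_i}\ra$ terms with coefficient $1$ so that their derivatives reassemble into $\exp_{\eins_{\mc{W}}}(V)$. Everything else is the standard derivative of log-sum-exp.
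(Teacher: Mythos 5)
Your proposal is correct and follows essentially the same route as the paper's proof: expand $\log(\exp_{\BS{n}}(V_i))$, use $\la W_i^{\ast},\eins\ra=1$ to obtain the closed log-sum-exp form of $\mc{C}$, and read off the gradient from the directional derivative with $X$ ranging over all of $\R^{m\times n}$. You even state the intermediate identity with the correct sign, $\la W_i^{\ast}, \log(\exp_{\BS{n}}(V_i))\ra = \la W_i^{\ast}, V_i\ra - \log\la\eins, e^{V_i}\ra$, where the paper's displayed intermediate step has a sign typo (though its final formula for $\mc{C}$ is correct).
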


\subsection{Numerical Optimization}\label{subsec:num-optimization}
With the above definitions of $\mc{C}$ and $F$, the optimization problem \eqref{eq:parameter-problem-modified-laf} for adapting the weights of the modified linear assignment flow \eqref{eq:modified-laf} takes the form
\begin{subequations}\label{eq:specific-parameter-problem-modified-laf}
\begin{align}
&\min_{\Omega \in \mc{P}}\; \sum_{i\in\mc{V}} \KL(W^*_i, W_i(T, \Omega))\\
&\;\; \text{s.t.}\nonumber\\
 &\quad \dot{V}(t) =\PT[L(W_0)] + R_{S(W_0)}[A_\Omega[V(t)]],\\
 &\quad V(0) = 0,\\
 &\quad W(T, \Omega) = \exp_{\BW}(V(T, \Omega)),
\end{align}
\end{subequations}
with $t \in [0,T]$.
Our strategy for \textit{parameter learning} is to follow the \textit{Riemannian gradient descent flow} on the parameter manifold induced by the potential
\begin{equation}
\begin{split}
 & \Phi \colon \mc{P} \to \R,\quad \Omega \mapsto \Phi(\Omega) := \sum_{i\in\mc{V}} \KL(W^*_i, W_i(T, \Omega)).
\end{split}
\end{equation}
Due to \eqref{eq:relation_Rgrad_and_Egrad-W}, this Riemannian gradient flow on $\mc{P}$ takes the form
\begin{subequations}\label{eq:Omega-flow}
\begin{align}
\begin{split}
  \dot \Omega(t) &= -\Rgrad_{\mc{P}} \Phi\big(\Omega(t)\big)= -\RO_{\Omega}\big[\Egrad \Phi\big(\Omega(t))\big) \big],
  \end{split}\\
  \Omega(0) &= \BP,\label{eq:Omega-flow-initial}
\end{align}
\end{subequations}
where $\RO_{\Omega}$ is given by \eqref{eq:R-W} on $\mc{P}$ and initial value \eqref{eq:Omega-flow-initial} represents an \textit{unbiased} initialization, i.e.~\textit{uniform} weights at every patch $\mc{N}_{i}$ at $i\in \mc{V}$.

We discretize \eqref{eq:Omega-flow} using the geometric explicit Euler scheme \eqref{eq:geomEuler-ODE-W} from Section~\ref{subsec:numeric-integration-Amfld} with constant step-size $h'>0$, which results in Algorithm~\ref{alg:1}.

\vspace{0.2cm}
\begin{algorithm}
  \KwData{Initial weights $\Omega^{(0)} = \BP$, objective function $\Phi(\Omega) = \mc{C}\big(V(T, \Omega)\big)$}
  \KwResult{Weight parameter estimates $\Omega^{\ast}$}
  \tcp{geometric Euler integration 
  }
  \For{$k = 0, \dotsc, K$}{
    compute $\Egrad \Phi(\Omega^{(k)})$ \tcp*{Algorithm~\ref{alg:2}}
    $\Omega^{(k+1)} = \exp_{\Omega^{(k)}}\big(-h' R_{\Omega^{(k)}} \big[\Egrad \Phi(\Omega^{(k)})\big]\big) $\;
  }
  \caption{Discretized \textit{Riemannian flow} \eqref{eq:Omega-flow}.}\label{alg:1}
\end{algorithm}

\vspace{0.5cm}
Algorithm~\ref{alg:1} calls Algorithm~\ref{alg:2} that we explain next. 
As pointed out in Remark~\ref{rem:objFunc-implicit-dependence-Omega}, the dependency of $\Phi(\Omega) = \mc{C}(V(T, \Omega))$ on $\Omega$ is only implicitly given through the solution $V(t, \Omega)$ of the modified linear assignment flow \eqref{eq:modified-laf}, evaluated at time $T$. According to \eqref{eq:adjoint-sensitivity-column-form}, the gradient of $\Phi$ decomposes as 
\begin{equation}\label{eq:partial-Phi}
  \Egrad \Phi(\Omega) = d_\Omega V(T, \Omega)^\T\big[\Egrad \mc{C}(V(T, \Omega))\big],
\end{equation}
where $d_\Omega V(T, \Omega)^\T$ is the sensitivity of the solution $V(T, \Omega)$ with respect to $\Omega$. Thus, the major task is to determine the sensitivity of $V(T, \Omega)$ in order to obtain the gradient $\Egrad \Phi(\Omega)$, which in turn drives the Riemannian gradient descent flow and adapts the weights $\Omega$. To this end, we choose the discretize-then-differentiate approach  \eqref{eq:discrete-sensitivity-parameter} -- recall  the commutative diagram of Fig.~\ref{fig:sensitivity-diagram} and relations summarized as Remark \ref{rem:commuting} -- with the explicit Euler method  and constant step-size $h>0$, which results in Algorithm~\ref{alg:2}.

\vspace{0.25cm}
\begin{algorithm}
  \KwData{Current weights $\Omega^{(k)}$}
  \KwResult{Objective value $\Phi(\Omega^{(k)}) = \mc{C}(V^{(N)}(\Omega^{(k)}))$, adjoint sensitivity $\partial \Phi(\Omega^{(k)})$}
  \tcp{forward Euler integration 
  }
  \For{$j = 0, \dots, N-1$}{
  $V^{(j+1)} = V^{(j)} + h F\big(V^{(j)}, \Omega^{(k)}\big)$\;}
  compute $\lambda^{(N)} = \Egrad \mc{C}(V^{(N)}(\Omega^{(k)}))$\;
  set $\Egrad \Phi(\Omega) = 0$\;
  \tcp{backward Euler integration 
  }
  \For{$j = N-1, \dots, 0$}{
      $\lambda^{(j)} = \lambda^{(j+1)} + h d_V F\big(V^{(j)}, \Omega^{(k)}\big)^\T \lambda^{(j+1)}$\;
      $\Egrad \Phi(\Omega) \mathrel{+}= h d_\Omega F\big(V^{(j-1)}, \Omega^{(k)}\big)^{\T}\lambda^{(j)}$
      \tcp*{summand of \eqref{eq:partial-Phi}
    }
  }
   \caption{Computation of the Euclidean gradient $\Egrad \Phi(\Omega^{(k)})$ \eqref{eq:partial-Phi}.}
   \label{alg:2}
\end{algorithm}

\section{Experiments}
\label{sec:Experiments}

In this section, we demonstrate and evaluate our approach. \textcolor{black}{We start in Section \ref{sec:binary-letters} with a scenario of 2 labels and images of binary letters. 
We show that an adaptive regularizer, which is trained on letters with \textit{vertical and horizontal} structures only, effectively labels \textit{curvilinear} letters. This result 
illustrates the \textit{adaptivity} of regularization by using \textit{non-uniform} weights that are predicted for \textit{novel unseen} image data.}

In Section \ref{sec:curvilinear}, we consider a scenario with 3 labels and curvilinear line structure, that has to be detected and labeled explicitly in noisy data. Just using uniform weights for regularization must fail. In addition to the noise, the actual image structure is randomly generated as well and defines a class of images. We demonstrate empirically that learning the weights to adapt within local neighborhoods from example data solves this problem.

In Section \ref{sec:label-transport}, we adopt a different viewpoint and focus on pattern formation, rather than on pattern detection and recovery. We demonstrate the modeling expressiveness of the assignment flow with respect to pattern formation. In fact, even when using the \textit{linear} assignment flow as in the present paper, label information can be flexibly transported across the image domain under certain conditions. The experiments just indicate what can be done, in principle, in order to stimulate future work. We return to this point in Section \ref{sec:Conclusion}.

\textcolor{black}{
Regarding parameter learning, all experiments were conducted using the Euler scheme of Section \ref{sec:adjoint-Euler} for solving the adjoint system. Section \ref{sec:adjoint-Heun} provides a slightly more advanced alternative. While the latter methods integrates more accurately, the resulting overall costs depend on further factors whose evaluation is beyond the scope of this paper. For an in-depth study of numerical schemes in connection with geometric integration of the assignment flow, we refer to \cite{Zeilmann:2018aa}.
}
%

\subsection{{Adaptive Regularization of Binary Letters}}\label{sec:binary-letters}
\textcolor{black}{In this experiment we consider binary images of letters. The goal is to label a given letter image into \textit{foreground} and \textit{background} regions. In Figure~\ref{fig:binary-letters} these labels are encoded by $\big \{\fbox{\phantom{\crule[red]{0.28cm}{0.28cm}}}, \fbox{\crule[black]{0.28cm}{0.28cm}}\big\} = \{\textit{background}, \textit{ foreground}\}$. First we apply our approach during a \textit{training phase} in order to learn weight adaptivity for letters consisting of \textit{vertical and horizontal} structures (Fig.~\ref{fig:binary-letters}(a)). Afterwards we evaluate the approach in a \textit{test phase} using letters consisting of \textit{curvilinear} structures (Fig.~\ref{fig:binary-letters}(g)).}

\newlength{\SizeLetters}
\setlength{\SizeLetters}{0.3\textwidth}
\begin{figure}
\begin{center}
    \begin{tabularx}{1\linewidth}{c ? c ? c}
    {\begin{tabularx}{0.33\linewidth}{c}
          \includegraphics[width=\SizeLetters]{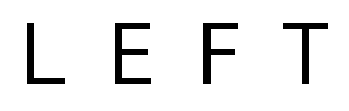} \\[-2mm]    
                \parbox{\SizeLetters}{\centering{(a) Training data}} \\[1mm]
                \includegraphics[width=1\SizeLetters]{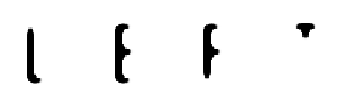}\\[-2mm]
                \parbox{\SizeLetters}{\centering{(b) labeling with \textit{uniform} weights}}\\[1mm]
                \includegraphics[width=\SizeLetters]{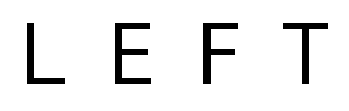} \\[-2mm]  
                \parbox{\SizeLetters}{\centering{(c) labeling with \textit{adaptive} weights}}\\[2mm]
                 \includegraphics[width=\SizeLetters]{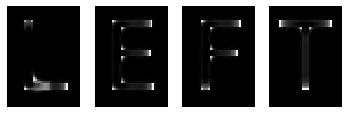} \\
\parbox{\SizeLetters}{\centering{(d) difference of adaptive\\[-1mm] to uniform \textit{weights}}}\\[3mm]
    \end{tabularx}}
    & 
    {\begin{tabularx}{0.25\linewidth}{p{4mm}p{4mm}p{1mm}p{4mm}p{4mm}} 
\includegraphics[height=1.6\SizeLetters]{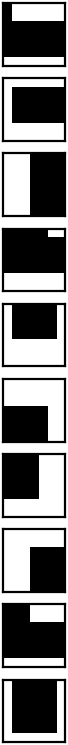} &
\includegraphics[height=1.6\SizeLetters]{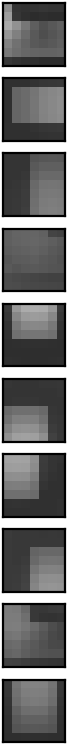} & 
&
\includegraphics[height=1.6\SizeLetters]{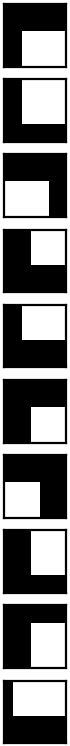} &
\includegraphics[height=1.6\SizeLetters]{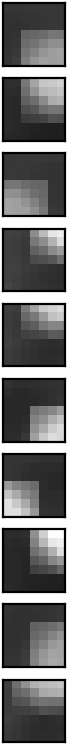}\\
\multicolumn{2}{c}{\centering(e)} &&\multicolumn{2}{c}{\centering(f)}\\[3mm]
\end{tabularx}}
&
    {\begin{tabularx}{0.33\linewidth}{c}
        \includegraphics[width=\SizeLetters]{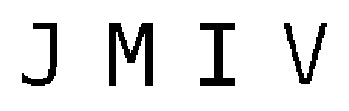} \\[-2mm]    
     \parbox{\SizeLetters}{\centering{(g) Test data}}\\[1mm]  
                  \includegraphics[width=\SizeLetters]{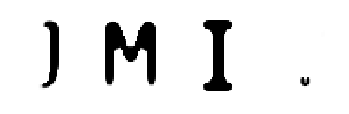}\\[-2mm]        
                         \parbox{\SizeLetters}{\centering{(h) labeling with \textit{uniform} weights}} \\[1mm]
                                      \includegraphics[width=\SizeLetters]{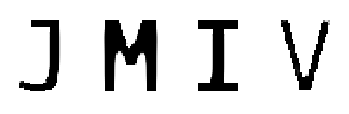} \\[-2mm]          
                                            \parbox{\SizeLetters}{\centering{(i) labeling with \textit{adaptive} weights}} \\[2mm]
                                                       \includegraphics[width=\SizeLetters]{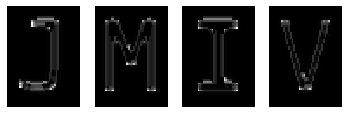}  \\       
                                                                 \parbox{\SizeLetters}{\centering{(j) difference of adaptive\\[-1mm] to uniform \textit{weights}}}\\[3mm]
        \end{tabularx}}
\\
{\normalsize \textbf{training data}}  & {\normalsize \textbf{coreset}} & {\normalsize \textbf{test data}}
    \end{tabularx}
\end{center}
    \caption{\textcolor{black}{\textbf{Adaptive regularization of binary letters.} \textsc{left column:} \textbf{(a)} Training data. \textbf{(b)} Labeling with uniform regularization fails. \textbf{(c)} Perfect adaptive reconstruction (sanity check). \textbf{(d)} illustrates weight adaptivity at each pixel in terms of the $\KL$-divergence of the \textit{learned} weight to the \textit{uniform} weight patch.
\textsc{middle column:} This column illustrates 10 pairs of image patches and corresponding weights for each class separately. The image patches with the corresponding optimal weight patches are illustrated by panel (e) for the \textit{foreground} class and by panel (f) for the \textit{background} class. We observe that the regularizer increases the influence of neighbors on the geometric averaging which belong to the respective class.
\textsc{right column:}
\textbf{(g)} Novel test data to be labeled using the regularizer trained on (a). \textbf{(h)} Uniform regularization fails \textbf{(i)} Adaptive reconstruction predicts \textit{curvilinear} structures of (e) using `knowledge' based on (a) where \textit{only vertical and horizontal} structures occur. \textbf{(j)} illustrates weights adaptivity at each pixel (cf. (d)).}
}
\label{fig:binary-letters}
  \end{figure}

\subsubsection{Training Phase}\label{sec:binary-letters-training}
\textcolor{black}{
Figure \ref{fig:binary-letters}(a) shows the binary images of letters which we used as \textit{training data}. Hereby, a given binary image served as input image and as ground truth as well. By using this data and solving problem \eqref{eq:specific-parameter-problem-modified-laf} we \textit{learn how to adapt the regularization parameter} of the modified linear assignment flow \eqref{eq:modified-laf}.}

\vspace{0.2cm}\textcolor{black}{
\textit{Optimization.} For each binary letter image, we solved problem \eqref{eq:specific-parameter-problem-modified-laf} using Algorithms \ref{alg:1} and \ref{alg:2} and the following parameter values: $|\mathcal{N}_{i}| = 7 \times 7$ (size of local neighborhoods, for every $i$), the Hamming distance (for the computation of the distance matrix \eqref{eq:def-distance-matrix}), $\rho = 0.5$ (scaling parameter for distance matrix, cf.~\eqref{eq:def-LW}), $h = 0.1$ (constant step-size for computing the gradient with Alg.~\ref{alg:2}), and $T=6$ (end of time horizon).
As for optimization on the parameter manifold $\mc{P}$ through the Riemannian gradient flow (Alg.~\ref{alg:1}), we used an initial value of $h' = 0.005$ together with backtracking for adapting the step-size. We terminated the iteration once the relative change 
\begin{equation}\label{eq:termination-learning}
 \frac{|\Phi ( \Omega^{(k)} ) - \Phi( \Omega^{(k-1)} )|}{h' |\Phi ( \Omega^{(k)} )|}
\end{equation}
of the objective function $\Phi\big( \Omega^{(k)} \big) = \mc{C}\big(V^{(N)}(\Omega^{(k)})\big)$ dropped below $0.01$ or the maximum number of $50$ iterations was reached.}

\vspace{0.2cm}\textcolor{black}{
\textit{Results.} 
The left column of Figure~\ref{fig:binary-letters} shows the results obtained during the training phase. Using \textit{uniform} weights fails completely to detect and label the letter structures (panel (b)). In contrast, the \textit{adapted} regularizer preserves the structure perfectly (panel (c)), i.e.~the \textit{optimal} weights steered the linear assignment flow towards the given ground-truth labeling. Panel (d) visualizes the weight \textit{adaptivity} at each pixel in terms of the $\KL$-divergence of the \textit{learned} weight to the \textit{uniform} weight patch.}

\subsubsection{Test Phase}\label{sec:binary-letters-testing}
\textcolor{black}{
During the training phase, optimal weights were associated with all training features through optimization, based on ground truth and a corresponding objective function. In the test phase with novel data and features, appropriate weights have to be \textit{predicted} because ground truth no longer is available. This was done by extracting a \textit{coreset} \cite{Phillips:2016aa} from the output generated by Algorithm \ref{alg:1} during the training phase, and constructing a map from novel features to weights, as described next.}

\vspace{0.2cm}
\textcolor{black}{
\textit{Coreset.} Let $\Omega^{\ast}$ denote the set of optimal weight patches generated by Algorithm \ref{alg:1}. As features we used $7 \times 7$ patches extracted from the training images. Let $P^{\ast}$ denote all \textit{feature vectors} $f_i,\,i\in\mc{V}$ (dimension $7 \times 7 = 49$) that were given as a point set in the Euclidean feature space $\mc{F} = \R^{49}$. We partitioned $P^{\ast}$ into two classes: \textit{foreground} and \textit{background}. Each class is represented by 156 prototypical patches extracted from the binary images. To each of these patches, a \textit{prototypical weight patch} was assigned, namely the geometric mean of all \textit{optimal} weight patches in $\Omega^{\ast}$ belonging to that patch.}

\textcolor{black}{
The middle column of Figure~\ref{fig:binary-letters} illustrates 10 pairs of image patches and corresponding weights for each class separately. By comparing the image patches with the corresponding optimal weight patches (cf. (e) \textit{foreground}, (f) \textit{background}), we observe that the regularizer increases the influence of neighbors on the geometric averaging which belong to the respective class.}

\vspace{0.2cm}
\textcolor{black}{
\textit{Mapping features to weights.} For a given \textit{novel} test image, we extracted $7 \times 7$ patches from the image, determined the closest image patch of the \textit{coreset} and assigned the corresponding weight patch to pixel $i$.
Note that we used the same size $7\times 7$ for the image patches and for the neighborhood size of geometric averaging.}

\vspace{0.2cm}\textcolor{black}{
\textit{Inference (labeling novel data).} In the test phase, we used the modified linear assignment flow and all parameter values in the same way, as was done during training. The only difference is that \textit{predicted} weight patches were used for regularization, as described above.}

\vspace{0.2cm}
\textcolor{black}{\textit{Results.} 
The right column of Figure~\ref{fig:binary-letters} shows the results obtained during the test phase. Panel (g) depicts the \textit{novel} (unseen) binary images. The next two panels show the labeling results using uniform weights (h) and using 
adaptive weights (i). Panel (j) illustrates the weight \textit{adaptivity} by showing the difference of predicted to uniform weights.}

\subsection{Adaptive Regularization of Curvilinear Line Structures}\label{sec:curvilinear}

We consider a collection of images containing line structures induced by random Voronoi diagrams (Fig.~\ref{fig:vessels}, panel (a)). The goal is pixel-accurate labeling of any given image with three labels representing: thin \textit{line} structure, \textit{homogeneous} region and \textit{texture}. In the figures below these labels are encoded by the three colors \big \{\crule[red]{0.3cm}{0.3cm}, \crule[green]{0.3cm}{0.3cm}, \crule[blue]{0.3cm}{0.3cm}\big\} = \{\textit{line}, \textit{homogeneous}, \textit{texture}\}. As usual in \textit{supervised machine learning}, our approach is first applied during a \textit{training phase} in order to learn weight adaptivity from ground truth labelings, and subsequently evaluated in a \textit{test phase} using \textit{novel unseen data}. 

\vspace{3mm}
\noindent
\subsubsection{Training Phase}
We used 20 randomly generated images together with ground truth as \textit{training data}: Figure \ref{fig:vessels}(a) shows one of these images and Figure \ref{fig:vessels}(b) the corresponding ground truth. \textcolor{black}{By following the same procedure as in Section~\ref{sec:binary-letters-training} we use these data in order to \textit{adapt the regularization parameter} of the modified linear assignment flow \eqref{eq:modified-laf} by solving problem
\eqref{eq:parameter-problem-modified-laf}, with the specific form given by \eqref{eq:specific-parameter-problem-modified-laf}.}

\newlength{\ImgSizeVessels}
\setlength{\ImgSizeVessels}{0.3\textwidth}
\begin{figure}
\begin{center}
    \begin{tabularx}{1\linewidth}{c c c}
	\includegraphics[width=1\ImgSizeVessels]{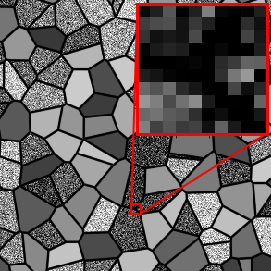} &	
	\includegraphics[width=1\ImgSizeVessels]{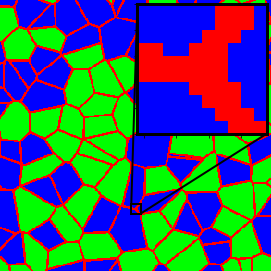} &
	\includegraphics[width=1\ImgSizeVessels]{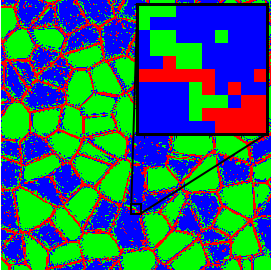}\\[2mm]
	\parbox{1\ImgSizeVessels}{\centering \bfseries (a) input scene} &
	\parbox{1\ImgSizeVessels}{\centering \bfseries (b) ground truth} & 
	\parbox{1\ImgSizeVessels}{\centering \bfseries (c) local rounding of distances \eqref{eq:D_ic}}
    \end{tabularx}
    \end{center}

\caption{\textcolor{black}{\textbf{Training data and local label assignments.}
The training data consist of 20 pairs of randomly generated images: \textbf{(a)} shows a randomly generated \textit{input image} from which features are extracted, as described in the text, and \textbf{(b)} the corresponding \textit{ground truth}. The ground truth images encode the labels 
 with colors \big \{\crule[red]{0.3cm}{0.3cm}, \crule[green]{0.3cm}{0.3cm}, \crule[blue]{0.3cm}{0.3cm}\big\} = \{line, homogeneous, texture\}.
Even though the global image structure can be easily assessed by the human eye, assigning correct labels pixelwise by an algorithm requires context-sensitive decisions, as the close-up view illustrates. \textbf{(c)} illustrates the quality of the distances \eqref{eq:D_ic} between extracted feature vectors. The panel shows the labeling obtained by local rounding, i.e.~by assigning to each pixel the label minimizing the corresponding distance. Comparing the close-up views of panel (b) and (c) shows that label assignments to individual pixels are noisy and incomplete. 
}}
 \label{fig:vessels}
\end{figure}

\vspace{0.2cm}
\textit{Feature Vectors.} 
The basis of our feature vectors are the outputs of simple $7 \times 7$ first- and second-order derivative filters, which are tuned to orientations at $0, 15,$\\$ \dots, 180$ degrees (we took absolute values of filter outputs to eliminate the $180 \sim 360$ degree symmetry). We reduced the dimension of the resulting feature vectors from $24$ to $12$ by taking the maximum of the first-order and second-order filter outputs, for each orientation. To incorporate more spatial information, we extracted $3 \times 3$ patches from this $12$-dimensional feature vector field. Thus, our  \textit{feature vectors} $f_i,\,i\in\mc{V}$ had dimension $3 \times 3 \times 12 = 108$ and were given as a point set in the Euclidean feature space $\mc{F} = \R^{108}$.
 
\vspace{0.2cm}
\textit{Label Extraction.}
Using ground truth information, we divided all feature vectors extracted from the training data into three classes: thin \textit{line} structure, \textit{homogeneous} region and \textit{texture}. We computed 200 prototypical feature vectors $l_{j c} \in \mc{F}$, $j\in[200]$, in each class $c\in\{\text{line}, \text{homogeneous}, \text{texture}\}$ by $k$-means clustering. Thus, each \textit{label} (line, homogeneous, texture) was represented by 200 feature vectors in $\mc{F}$.

\vspace{0.2cm}
\textit{Distance Matrix.}
Even though in the original formulation \eqref{eq:def-GJ} labels are represented by a single feature vector, multiple representatives can be taken into account as well by modifying the distance matrix \eqref{eq:def-distance-matrix} accordingly. With the identification
\[c \in \{\text{line}, \text{homogeneous}, \text{texture}\} = \{1, 2, 3\},\]
we defined the entries of the distance matrix $D_{ic}$, for every $i \in\mc{V}$, as the distance between $f_i$ and the best fitting representative $l_{jc}$ for class $c$, i.e.
\begin{equation}\label{eq:D_ic}
  D_{ic} := \min_{j\in[200]} \|f_i - l_{jc}\|_2.
\end{equation}
The quality of this distance information is illustrated by Figure \ref{fig:vessels}(c) that shows the labeling obtained by \textit{local rounding}, i.e.~by assigning to each pixel $i$ the label $c = \min_{\tilde c} D_{i\tilde c}$. Although the result looks similar to the ground truth \textcolor{black}{(cf. Figure \ref{fig:vessels}(b))}, it is actually quite noisy when looking to single pixels in the close-up view of Figure \ref{fig:vessels}(c).

\vspace{0.2cm}
\textit{Optimization.} For each input image of the training set, we solved problem \eqref{eq:parameter-problem-modified-laf} using Algorithms \ref{alg:1} and \ref{alg:2} and the following parameter values: $|\mathcal{N}_{i}| = 9 \times 9$ (size of local neighborhoods, for every $i$), $\rho = 1$ (scaling parameter for distance matrix, cf.~\eqref{eq:def-LW}), $h = 0.5$ (constant step-size for computing the gradient with Alg.~\ref{alg:2}), and $T=6$ (end of time horizon).
As for optimization on the parameter manifold $\mc{P}$ through the Riemannian gradient flow (Alg.~\ref{alg:1}), we used an initial value of $h' = 0.0125$ together with backtracking for adapting the step-size. We terminated the iteration \textcolor{black}{once the relative change \eqref{eq:termination-learning} of the objective function dropped below $0.001$ or the maximum number of $100$ iterations was reached.}

\newlength{\ImgSizeTrainingData}
\setlength{\ImgSizeTrainingData}{0.22\textwidth}
\begin{figure}[h!]
\begin{center}
    \begin{tabularx}{0.98\linewidth}{c c c c }
    	\includegraphics[width=\ImgSizeTrainingData]{./vessels_zoomed_scene009} &
	\includegraphics[width=\ImgSizeTrainingData]{./vessels_zoomed_rounded_distance_scene009}&
	\includegraphics[width=\ImgSizeTrainingData]{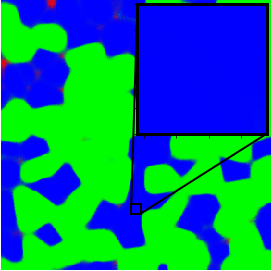} &
	\includegraphics[width=\ImgSizeTrainingData]{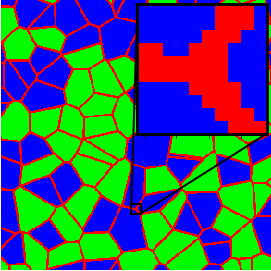} \\
	\includegraphics[width=\ImgSizeTrainingData]{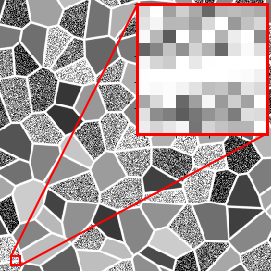} &
	\includegraphics[width=\ImgSizeTrainingData]{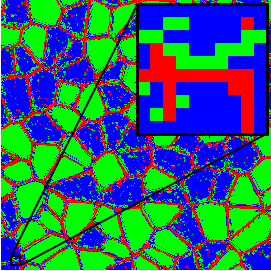}&
	\includegraphics[width=\ImgSizeTrainingData]{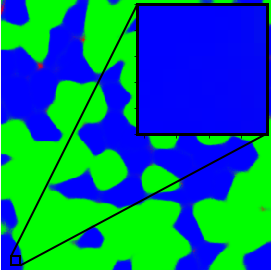} &
	\includegraphics[width=\ImgSizeTrainingData]{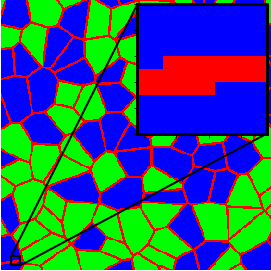} \\
\parbox{\ImgSizeTrainingData}{\centering  \bfseries (a) input scene} &  
\parbox{\ImgSizeTrainingData}{\centering  \bfseries (b) \textit{local rounding} of\\ distance information} &  
\parbox{\ImgSizeTrainingData}{\centering  \bfseries (c) labeling with\\ \textit{uniform} weights} &  
\parbox{\ImgSizeTrainingData}{\centering  \bfseries (d) labeling with\\ \textit{optimal} weights}
    \end{tabularx}
    \end{center}

\caption{\textbf{Training phase: Labeling results.}
This figure shows results of the training phase. Panel \textbf{(a)} shows the given input scene and panel \textbf{(b)} the corresponding \textit{locally rounded} distance information. The labeling with 
\textit{uniform} regularization (panel \textbf{(c)}) returns \textit{smoothed over} regions and completely fails  to preserve the line structures. The \textit{adaptive} regularizer preserves the line structure perfectly (panel \textbf{(d)}), i.e.~the \textit{optimal weights} are able to steer the linear assignment flow successfully towards the given ground-truth labeling. (\{\crule[red]{0.3cm}{0.3cm}, \crule[green]{0.3cm}{0.3cm}, \crule[blue]{0.3cm}{0.3cm}\} = \{line, homogeneous, texture\})}
\label{fig:results-training}
\end{figure}

 \vspace{0.2cm}
\textit{Results.} 
Figure \ref{fig:results-training} shows two results obtained during the training phase. They illustrate \textit{non-adaptive} regularization using \textit{uniform} weights, which results in blurred partitions and fails completely to detect and label the line structures (panel (c)). On the other hand, the \textit{adapted} regularizer preserves and restores the structure perfectly (panel (d)), i.e.~the \textit{optimal} weights steered the linear assignment flow towards the given ground-truth labeling.

\newlength{\SizeVessel}
\setlength{\SizeVessel}{0.2\textwidth}
\begin{figure}[h!]
\begin{center}
    \begin{tabularx}{0.9\linewidth}{c c c c}
     \includegraphics[width=\SizeVessel]{./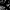}  
     &\includegraphics[width=\SizeVessel]{./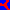}  
     &\includegraphics[width=\SizeVessel]{./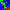}  
          &\includegraphics[width=\SizeVessel]{./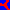}  
      \\
     \parbox{\SizeVessel}{\centering{\bfseries (a) training data}} 
     &\parbox{\SizeVessel}{\centering{\bfseries (b) ground truth}} 
     &\parbox{\SizeVessel}{\centering{ \bfseries (c) \textit{local rounding} of\\[-1mm] distance information}} 
          &\parbox{\SizeVessel}{\centering{ \bfseries (d) labeling with\\[-1mm] \textit{optimal} weights}} 
     \\[4mm]
    \end{tabularx}
     \includegraphics[width=3\SizeVessel]{./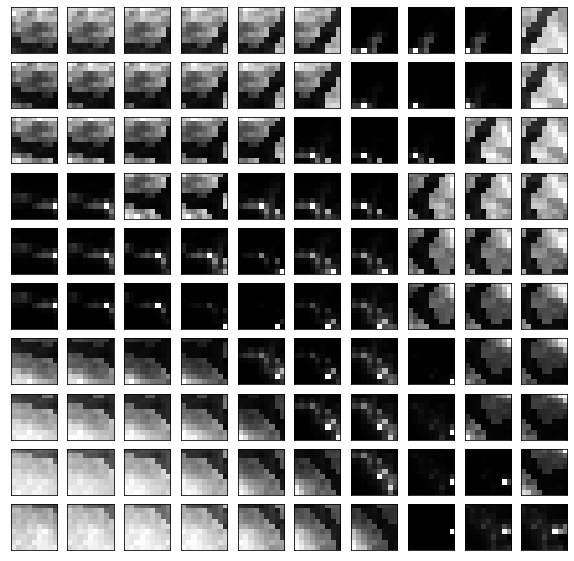}  \\     
     \parbox{4\SizeVessel}{\centering{ \bfseries (e) Optimal weight patches}} 
\end{center}
\caption{
\textbf{Training phase: Optimal weight patches.} \textsc{Top row:} \textbf{(a)} Close-up view of training data ($10 \times 10$ pixel region). \textbf{(b)} The corresponding ground truth 
section. \textbf{(c)} Local label assignments. \textbf{(d)} Correct labeling using adapted optimal  weights. \textsc{Bottom row:} \textbf{(e)} The corresponding optimal weight patches 
($10 \times 10$ grid), one patch for each pixel. Close to the line structure, the regularizer increases the influence of neighbors on the geometric averaging of assignments whose distances match the prescribed ground truth labels. Away from the line structure, the regularizer has learned to \textit{suppress} with small weights neighbors belonging to a line structure.
}
\label{fig:results-local-learned-vessel}
  \end{figure}

Figure \ref{fig:results-local-learned-vessel} shows a close-up view of a $10\times 10$ pixel region together with the corresponding $10\times 10$ \textit{optimal weight patches}, extracted from $\Omega^*$. The top row depicts {(a)} the training data, {(b)} the corresponding ground truth, {(c)} the local label assignments, and {(d)} the labeling obtained when using the learned weights $\Omega^*$. Plot {(e)} shows the corresponding optimal weight patches $\Omega_{i}^* = (\omega_{i1}, \ldots, \omega_{i\mc{N}})^\T$ associated to every pixel $i$ in the $10\times 10$ pixel region, where small and large weights are indicated by dark and bright gray values, respectively. These weight patches illustrate the result of the \textit{learning  process} for adapting the weights. Close to the line structure, the regularizer \textit{increases} the influence (with larger weights) of neighbors whose distance information matches the prescribed ground truth label. Away from the line structure, the regularizer has learned to \textit{suppress} (with small weights) neighbors that belong to a line structure.

\subsubsection{Test Phase}
\textcolor{black}{
As already explained in Section~\ref{sec:binary-letters-testing}, we have to \textit{predict} appropriate weights for novel data and features. We proceed as done before by extracting a \textit{coreset} from the output generated by Algorithm \ref{alg:1} and constructing a map from novel features to weights.}

\vspace{0.2cm}
\textit{Coreset.} Let $\Omega^{\ast}$ denote the set of optimal weight patches generated by Algorithm \ref{alg:1}, and let $P^{\ast}$ denote the set of all $15 \times 15$ patches of local label assignments based on the corresponding training features and distance \eqref{eq:D_ic}. We partitioned $P^{\ast}$ into three classes: thin \textit{line structures}, \textit{homogeneous} regions and \textit{texture}, and extracted for each class separately 225 prototypical patches by $k$-means clustering. To each of these patches and the corresponding cluster, a \textit{prototypical weight patch} was assigned, namely the weighted geometric mean of all \textit{optimal} weight patches in $\Omega^{\ast}$ belonging to that cluster. As weights for the averaging we used the Euclidean distance between the respective patches of local label assignments and the corresponding cluster centroid.

Figure \ref{fig:coreset-vessels} depicts 10 pairs of patches of prototypical label assignments and weights, for each of the three classes: line, homogenous, texture. Comparing these weight patches with the optimal patches depicted by Figure \ref{fig:results-local-learned-vessel}, we observe that the former are regularized (smoothed) by geometric averaging and, in this sense, summarize and represent all optimal weights computed during the training phase.

\vspace{0.2cm}
\textit{Mapping features to weights.} For each \textit{novel} test image, we extracted features using the same procedure as done in the \textit{training phase} and computed at each pixel $i$ the patch of local label assignments. For the latter patch, the closest patch of local label assignments of the \textit{coreset} was determined, and the corresponding weight patch was assigned to pixel $i$.

Note that the patch size $15\times 15$ of local label assignments was chosen larger as the patch size $9\times 9$ of the weights that was used both during training and for testing. The former larger neighborhood defines the local `feature context' that is used to predict weights for novel data.

\vspace{0.2cm}
\textit{Inference (labeling novel data).} In the test phase, we used the modified linear assignment flow and all parameter values in the same way, as was done during training. The only difference is that \textit{predicted} weight patches were used for regularization, as described above.

\vspace{0.2cm}
\textit{Results.}
 Figure \ref{fig:results-testing} shows a result of the test phase. Since all data are \textit{randomly} generated, this result is representative for the entire image class. 
\textcolor{black}{The panels (a) and (g) show the input data}, whereas ground truth (b) is only shown for visual comparison. \textcolor{black}{Panel (c) shows the labeling obtained using uniform weights and (d) illustrates the difference of (c) to the ground truth (b). Panel (e) shows the labeling obtained using adaptive weights, and (f) the corresponding difference of (e) to the ground truth (b). The labeling result clearly demonstrated the impact of weight \textit{adaptivity}.} This aspect is further illustrated in panel (h).

Figure \ref{fig:results-local-testing} shows \textit{predicted} weight patches for novel test data in the same format as Figure \ref{fig:results-local-learned-vessel} depicts \textit{optimal} weight patches computed during training. The similarity of the behaviour of predicted and optimal weights for pixels close and away from local line structure, demonstrates that the approach generalizes well to novel data. Since these data are randomly generated, this performance is achieved for any image data in this class.

\newlength{\SizeCoreset}
\setlength{\SizeCoreset}{0.6\textwidth}
\begin{figure}[h!]
    \begin{tabularx}{0.62\linewidth}{c}
     \includegraphics[width=\SizeCoreset]{./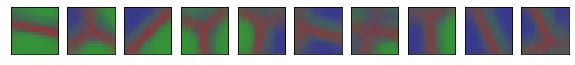} \\
      \includegraphics[width=\SizeCoreset]{./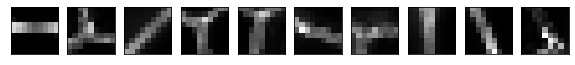} \\
     \parbox{\SizeCoreset}{\centering{ \bfseries (a) line}} \\[2mm]
               \includegraphics[width=\SizeCoreset]{./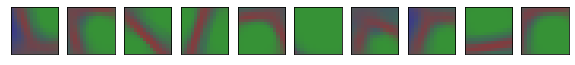} \\
           \includegraphics[width=\SizeCoreset]{./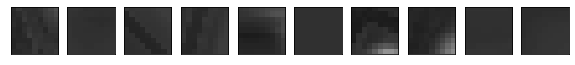} \\
     \parbox{\SizeCoreset}{\centering{ \bfseries (b) homogeneous}}\\[2mm]
               \includegraphics[width=\SizeCoreset]{./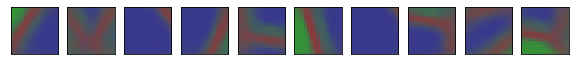} \\
           \includegraphics[width=\SizeCoreset]{./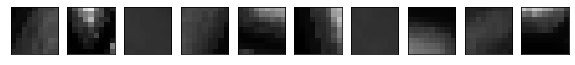}\\
     \parbox{\SizeCoreset}{\centering{ \bfseries (c) texture}}
    \end{tabularx}
\caption{
\textbf{Coreset visualization.} This plot shows $3\times 10$ prototypical patches of local label assignments and the corresponding weight patches of the coreset, for each of the 3 classes. \textbf{(a)} 10 prototypical pairs of the class \textit{line}. Weight patches `know' to which neighbors large weights have to be assigned, such that the local line structure is labeled correctly. 
\textbf{(b)} Weight patches of the \textit{homogeneous} label class are almost uniform, which is plausible, because the noisy assignments can be filtered most effectively. \textbf{(c)} The weight patches of the \textit{texture} label are comparable to the \textit{homogeneous} ones and almost uniform, for the same reason. 
(Color code \big \{\crule[red]{0.3cm}{0.3cm}, \crule[green]{0.3cm}{0.3cm}, \crule[blue]{0.3cm}{0.3cm}\big\} = \{line, homogeneous, texture\}).
}
\label{fig:coreset-vessels}
  \end{figure}

\newlength{\SizeVesselPrediction}
\setlength{\SizeVesselPrediction}{0.28\textwidth}
\begin{SCfigure}
\begin{tabularx}{0.6\linewidth}{c c}
	\includegraphics[width=\SizeVesselPrediction]{./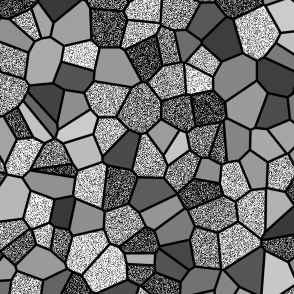} &
	\includegraphics[width=\SizeVesselPrediction]{./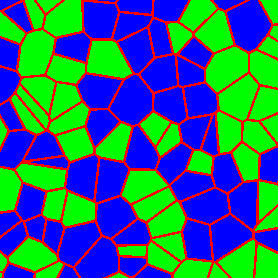} \\
	\parbox{\SizeVesselPrediction}{\centering{ \bfseries (a) novel (test) data}} &
	\parbox{\SizeVesselPrediction}{\centering{ \bfseries (b) ground truth}} \\[3mm]
	\includegraphics[width=\SizeVesselPrediction]{./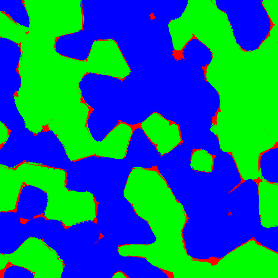} &
          \includegraphics[width=\SizeVesselPrediction]{./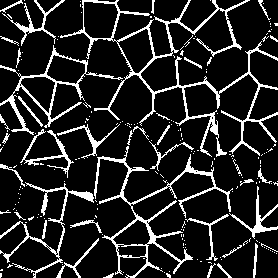} \\
	\parbox{\SizeVesselPrediction}{\centering{ \bfseries (c) labeling with \textit{uniform} weights}} &
	\parbox{\SizeVesselPrediction}{\centering{ \bfseries (d) difference of (c) to (b)}} \\[3mm]
          \includegraphics[width=\SizeVesselPrediction]{./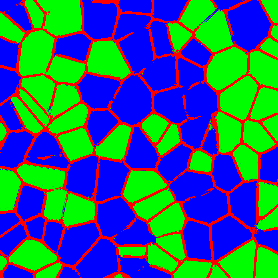} &
          \includegraphics[width=\SizeVesselPrediction]{./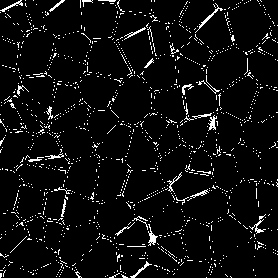} \\
          \parbox{\SizeVesselPrediction}{\centering{\bfseries  (e) labeling with \textit{adaptive} weights}} &
          \parbox{\SizeVesselPrediction}{\centering{ \bfseries (f) difference of (e) to (b)}}  \\[3mm]
	\includegraphics[width=\SizeVesselPrediction]{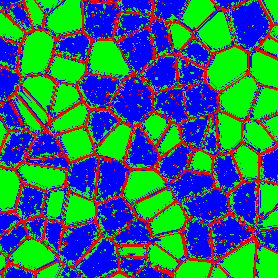} &
          \includegraphics[width=\SizeVesselPrediction]{./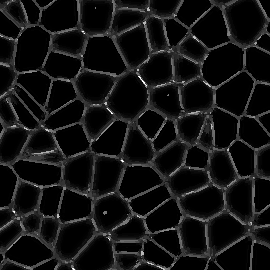}\\
	 \parbox{\SizeVesselPrediction}{\centering{ \bfseries (g) \textit{local rounding} of\\ distance information}} & 
	 \parbox{\SizeVesselPrediction}{\centering{ \bfseries (h) difference of adaptive\\[-1mm] to uniform \textit{weights}}}
    \end{tabularx}
        \caption{\textcolor{black}{\textbf{Test phase: Labeling results.} \textbf{(a)} Randomly generated novel input data, \textbf{(b)} the corresponding ground truth. \textbf{(c)} Labeling using \textit{uniform} weights fails to detect and label line structures. \textbf{(d)} illustrates the difference of (c) to the ground truth (b). \textbf{(e)} \textit{Adaptive} regularizer based on \textit{predicted} weights yields a result that largely agrees with ground truth. \textbf{(f)} shows the difference of (e) to the ground truth (b). 
       \textbf{(g)} shows the corresponding \textit{locally rounded} distance information extracted from the image data (a). Panel \textbf{(h)} illustrates weights adaptivity at each pixel in terms of the distance of the \textit{predicted} weight patch to the \textit{uniform} weight.
}}
\label{fig:results-testing}
  \end{SCfigure}

\begin{figure}[h!]
\begin{center}
    \begin{tabularx}{0.9\linewidth}{c c c c}
     \includegraphics[width=\SizeVessel]{./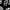}  
     &\includegraphics[width=\SizeVessel]{./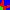}  
     &\includegraphics[width=\SizeVessel]{./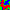}  
          &\includegraphics[width=\SizeVessel]{./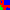}  \\
               \parbox{\SizeVessel}{\centering{ \bfseries (a) novel (test) data}} 
     &\parbox{\SizeVessel}{\centering{ \bfseries (b) ground truth}} 
     &\parbox{\SizeVessel}{\centering{ \bfseries (c) rounded distance}} 
          &\parbox{\SizeVessel}{\centering{ \bfseries (d) adaptive labeling}} 
      \\[4mm]
    \end{tabularx}
         \includegraphics[width=3\SizeVessel]{./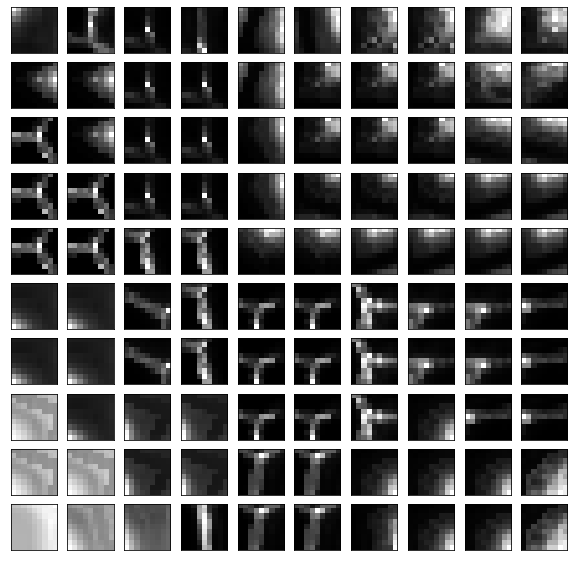}  
         \parbox{4\SizeVessel}{\centering{ \bfseries (e) predicted weight patches}}
\end{center}
\caption{
\textbf{Test phase: Predicted weight patches.} \textsc{Top row:} \textbf{(a)} Close-up view of \textit{novel} data ($10 \times 10$ pixel window). \textbf{(b)} Corresponding ground 
truth section (just for visual comparison, \textit{not} used in the experiment). \textbf{(c)} Local label assignment. \textbf{(d)} Labeling result using adaptive regularization with predicted weights. \textsc{Bottom:} \textbf{(e)} Corresponding predicted weight patches 
($10 \times 10$ grid), one patch for each pixel of the test data (a). The \textit{predicted} weight patches behave similar to the \textit{optimal} weight patches depicted by Fig.~\ref{fig:results-local-learned-vessel}, that were computed during the training phase (for different data). This shows that our approach generalizes to novel data.
}
\label{fig:results-local-testing}
  \end{figure}

\subsection{Pattern Formation by Label Transport}\label{sec:label-transport}
In this section, we illustrate the \textit{model expressiveness} of the assignment flow. \textcolor{black}{Specifically, we choose an input image and a target labeling which patterns are \textit{quite different}. The task is to estimate weights in order to steer the assignment flow to the target labeling. We show that our learning approach can determine the weights that `connect' these patterns by the assignment flow.} This shows that the weights which determine the regularization properties of the assignment flow actually encode information for \textit{pattern formation}. Finally, we briefly point out and illustrate in Section \ref{sec:Parameter-Learning-vs-OC} limitations of the current version of our approach.

\textcolor{black}{For the patterns below we used $\mathcal{X} = \big \{\fbox{\phantom{\crule[red]{0.28cm}{0.28cm}}}, \fbox{\crule[black]{0.28cm}{0.28cm}}\big\} = \{\textit{background}, \textit{ foreground}\}$ as labels and the Hamming distance for the computation of the distance matrix \eqref{eq:def-distance-matrix}.}

\subsubsection{Pattern Completion}
The \textit{top row} of Figure \ref{fig:fern-labeling-results} shows \textcolor{black}{the \textit{input image} and 
the \textit{target labeling}. The second row illustrates the evolution of the \textit{linear} assignment flow using optimal weight parameters. These optimal parameters were obtained by the Riemannian gradient flow on the parameter manifold in order to solve problem \eqref{eq:specific-parameter-problem-modified-laf}, which effectively steers the assignment flow to the target labeling.}
 
Having obtained the optimal weights $\Omega^{\ast}$ after convergence, we inserted them into the original \textit{nonlinear} assignment flow. The evolution of corresponding label assignments is shown by the third row of Figure \ref{fig:fern-labeling-results}. The fact that the label assignment at the final time $T$ is close to the target labeling which the linear assignment flow reaches exactly, confirms the remarkably close approximation of the nonlinear flow by the linear assignment flow, as already demonstrated in \cite{Zeilmann:2018aa} in a completely different way.

The rightmost panel in the top row of Figure \ref{fig:fern-labeling-results} shows, for each pixel, the deviation of the optimal weight patch form uniform weights. While it is obvious that the `source labeling' of the input data receives large weights, the spatial arrangement of weights at all other locations is hard to predict beforehand by humans. This is why \textit{learning} them is necessary.

\newlength{\ImgSizeFeather}
\setlength{\ImgSizeFeather}{0.11\textwidth}
\begin{figure}[h!]
\begin{center}
    \begin{tabularx}{0.7\linewidth}{c c c c}
	\fbox{\includegraphics[width=1.5\ImgSizeFeather]{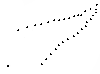}} &
	\fbox{\includegraphics[width=1.5\ImgSizeFeather]{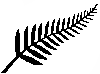}} &
	\fbox{\includegraphics[width=1.5\ImgSizeFeather]{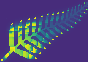}} &
	{\includegraphics[height=1\ImgSizeFeather]{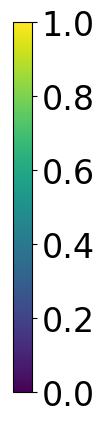}}\\
\parbox{1.5\ImgSizeFeather}{\centering \textcolor{black}{Input image}} &
\parbox{1.5\ImgSizeFeather}{\centering \textcolor{black}{Target labeling}} &
\parbox{1.75\ImgSizeFeather}{\centering \textcolor{black}{difference of adaptive\\[-1mm] to uniform \textit{weights}}}
    \end{tabularx}
    \end{center}
    
    \begin{center}
    \begin{tabularx}{1\linewidth}{c c c c c c c}            
    \rotatebox{90}{{\hspace{2mm} \textbf{Linear}}} &
        	\fbox{\includegraphics[width=\ImgSizeFeather]{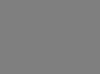}}&
	\fbox{\includegraphics[width=\ImgSizeFeather]{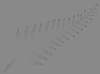}} &
	\fbox{\includegraphics[width=\ImgSizeFeather]{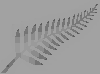}} &
	\fbox{\includegraphics[width=\ImgSizeFeather]{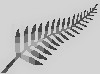}} &
	\fbox{\includegraphics[width=\ImgSizeFeather]{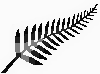}} &
	\fbox{\includegraphics[width=\ImgSizeFeather]{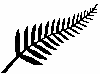}} \\
            \rotatebox{90}{{\hspace{-1mm} \textbf{Nonlinear}}} &
        	\fbox{\includegraphics[width=\ImgSizeFeather]{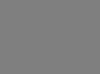}}&
	\fbox{\includegraphics[width=\ImgSizeFeather]{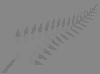}} &
	\fbox{\includegraphics[width=\ImgSizeFeather]{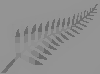}} &
	\fbox{\includegraphics[width=\ImgSizeFeather]{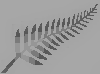}} &
	\fbox{\includegraphics[width=\ImgSizeFeather]{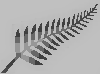}} &
	\fbox{\includegraphics[width=\ImgSizeFeather]{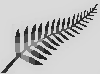}} \\
	&
\parbox{\ImgSizeFeather}{\centering\scriptsize $t=0$} & 
\parbox{\ImgSizeFeather}{\centering\scriptsize $t=1$} & 
\parbox{\ImgSizeFeather}{\centering\scriptsize $t=2$} &
\parbox{\ImgSizeFeather}{\centering\scriptsize $t=3$} &
\parbox{\ImgSizeFeather}{\centering\scriptsize $t=4$} &
\parbox{\ImgSizeFeather}{\centering\scriptsize $T=5$}
    \end{tabularx}
\end{center}
\caption{\textbf{Pattern completion.} This figure illustrates the model expressiveness of the assignment flow. \textsc{top row:} \textcolor{black}{Input image and target labeling}. The task was to estimate weights in order to steer the assignment flow to the target labeling. The rightmore panel illustrates, for each pixel, the distance of \textit{uniform} weights from the \textit{optimal} estimated weight patch.  
\textsc{middle row:} Label assignments of the linear assignment flow \textcolor{black}{using optimal weights obtained by solving \eqref{eq:parameter-problem-modified-laf}.} The Riemannian gradient flow on the parameter manifold effectively steers the flow to the target labeling. \textsc{bottom row:} Label assignments of the \textit{nonlinear} assignment flow using the optimal weights that were estimated using the \textit{linear} assignment flow. Closeness of both labeling patterns at the final point of time $T=5$ demonstrates that the linear assignment flow provides a good approximation of the full nonlinear flow. }
\label{fig:fern-labeling-results}
\end{figure}

\subsubsection{Transporting and Enlarging Label Assignments}
We repeated the experiment of the previous section using the academic scenario depicted by Figure \ref{fig:moving-mass-labeling-results}. A major difference is that locations of the input \textcolor{black}{image} \textit{do not} form a subset of the locations of the target labeling. As a consequence, the corresponding `mass' of assignments has to be both \textit{transported and enlarged}.

The results shown by Figure \ref{fig:moving-mass-labeling-results} closely resemble those of Figure \ref{fig:fern-labeling-results}, such that the corresponding comments apply likewise. We just point out again the following: Looking at the optimal weight patches in terms of their deviation from uniform weights, as depicted by the rightmost panel in the top row of Figure \ref{fig:moving-mass-labeling-results}, it is both interesting and not too difficult to understand -- after convergence and informally by visual inspection -- how these weights encode this particular `label transport'. However, predicting these weights and certifying their optimality \textit{beforehand}, seems to be an infeasible task. For example, it is hard to predict that the creation of intermediate locations where assignment mass temporarily accumulates (clearly visible in Fig.~\ref{fig:moving-mass-labeling-results}), effectively optimizes the constrained functional \eqref{eq:parameter-problem-modified-laf}. \textit{Learning} these weights, on the other hand, just requires to apply our approach.

\newlength{\ImgSizeSupervised}
\setlength{\ImgSizeSupervised}{0.11\textwidth}
\begin{figure}[h!]
\begin{center}
    \begin{tabularx}{0.75\linewidth}{c c c c}
	\fbox{\includegraphics[width=1.3\ImgSizeSupervised]{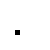}} &
	\fbox{\includegraphics[width=1.3\ImgSizeSupervised]{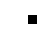}} &
	\fbox{\includegraphics[width=1.3\ImgSizeSupervised]{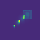}}&
	\includegraphics[height=1.3\ImgSizeSupervised]{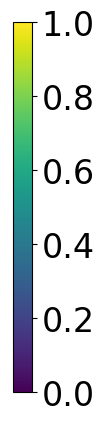}\\
\parbox{1.5\ImgSizeSupervised}{\centering \textcolor{black}{Input image}} &
\parbox{1.5\ImgSizeSupervised}{\centering \textcolor{black}{Target labeling}} &
\parbox{1.5\ImgSizeSupervised}{\centering \textcolor{black}{difference of adaptive\\[-1mm] to uniform \textit{weights}}}
    \end{tabularx}
    \end{center}
    
\begin{center}
    \begin{tabularx}{1\linewidth}{c c c c c c c}
        \rotatebox{90}{{\hspace{1mm} \textbf{Linear}}} &
        \fbox{\includegraphics[width=\ImgSizeSupervised]{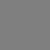}}&
	\fbox{\includegraphics[width=\ImgSizeSupervised]{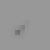}} &
	\fbox{\includegraphics[width=\ImgSizeSupervised]{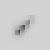}} &
	\fbox{\includegraphics[width=\ImgSizeSupervised]{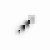}} &
	\fbox{\includegraphics[width=\ImgSizeSupervised]{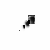}} &
	\fbox{\includegraphics[width=\ImgSizeSupervised]{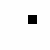}} \\
	        \rotatebox{90}{{\hspace{1mm} \textbf{Nonlinear}}} &
        \fbox{\includegraphics[width=\ImgSizeSupervised]{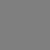}}&
	\fbox{\includegraphics[width=\ImgSizeSupervised]{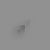}} &
	\fbox{\includegraphics[width=\ImgSizeSupervised]{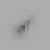}} &
	\fbox{\includegraphics[width=\ImgSizeSupervised]{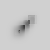}} &
	\fbox{\includegraphics[width=\ImgSizeSupervised]{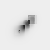}} &
	\fbox{\includegraphics[width=\ImgSizeSupervised]{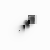}} \\
	&
\parbox{\ImgSizeSupervised}{\centering\scriptsize $t=0$} & 
\parbox{\ImgSizeSupervised}{\centering\scriptsize $t=2$} & 
\parbox{\ImgSizeSupervised}{\centering\scriptsize $t=3.5$} &
\parbox{\ImgSizeSupervised}{\centering\scriptsize $t=5$} &
\parbox{\ImgSizeSupervised}{\centering\scriptsize $t=6.5$} &
\parbox{\ImgSizeSupervised}{\centering\scriptsize $T=8.5$}
    \end{tabularx}
\end{center}
\caption{\textbf{Transporting and Enlarging Label Assignments.} See Fig.~\ref{fig:fern-labeling-results} for the set-up. \textsc{top row:} Label locations of the input data \textit{do not} form a subset of the target locations. Thus, `mass' of label assignments has to be both transported and enlarged. Rightmost panel: Distance of the \textit{optimal} weight patch from uniform weights, for every pixel.   
\textsc{middle row:} Applying our approach to \eqref{eq:parameter-problem-modified-laf} effectively solves the problem. \textsc{bottom row:} 
Inserting the optimal weights that are computed using the \textit{linear} assignment flow into the \textit{nonlinear} assignment flow gives a similar result and underlines the good approximation property of the linear assignment flow. It is interesting to obverse that computing the Riemannian gradient flow on the parameter manifold entails `intermediate locations' where assignment mass accumulates temporarily. This underlines the necessity of learning, since it seems hard to predict such an \textit{optimal} regularization strategy beforehand.
}
\label{fig:moving-mass-labeling-results}
\end{figure}

\subsubsection{Parameter Learning vs.~Optimal Control}\label{sec:Parameter-Learning-vs-OC}
Figure \ref{fig:exceeding-the-end-time-labeling-results} illustrates limitations of our parameter learning approach. In this experiment, we simply \textit{exceeded} the time horizon 
in order to inspect labelings induced by the linear assignment flow \textit{after} the point of time $T$, that was used for determining optimal weights in the training phase. Starting with $T$, Figure \ref{fig:exceeding-the-end-time-labeling-results} shows these labelings for both experiments corresponding to Figures \ref{fig:fern-labeling-results} and \ref{fig:moving-mass-labeling-results}.

Unlike the fern pattern (top row) where the initial label locations formed a subset of the target locations, the `moving mass pattern' (bottom row) is \textit{unsteady} in the following quite natural sense: the linear assignment flow simply continues transporting mass beyond time $T$. As a result, assignments to the white label are transported to locations of the black target pattern. Hence, the target pattern is first created up to time $T$ and destroyed afterwards. 

This behaviour is not really a limitation, but a consequence of merely learning \textit{constant} weight parameters. Due to the formulation of the optimization problem \eqref{eq:parameter-problem-modified-laf}, optimal weights not only encode the `knowledge' how to steer the assignment flow in order to solve the problem, but also the \textit{time period} after which the task has to be completed. Fixing this issue requires a higher-level of adaptivity: weight \textit{functions} depending on time and the current state of assignments would have to be estimated, that may be adjusted online through feedback in order to \textit{control} the assignment flow in a more flexible way.

\begin{figure}[h!]
\begin{center}
    \begin{tabularx}{0.8\linewidth}{c c c c c}	
    \fbox{\includegraphics[width=\ImgSizeFeather]{./feather_frame50}} &
	\fbox{\includegraphics[width=\ImgSizeFeather]{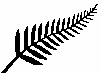}} &
	\fbox{\includegraphics[width=\ImgSizeFeather]{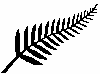}} &
	\fbox{\includegraphics[width=\ImgSizeFeather]{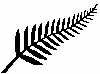}} &
	\fbox{\includegraphics[width=\ImgSizeFeather]{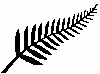}} \\
	\parbox{\ImgSizeSupervised}{\centering\scriptsize $t=T$} & 
\parbox{\ImgSizeSupervised}{\centering\scriptsize $t=T+0.5$} & 
\parbox{\ImgSizeSupervised}{\centering\scriptsize $t=T+1$} &
\parbox{\ImgSizeSupervised}{\centering\scriptsize $t=T+1.5$} &
\parbox{\ImgSizeSupervised}{\centering\scriptsize $t=T+2$}\\[2mm]
        \fbox{\includegraphics[width=\ImgSizeSupervised]{./moving_mass_frame17}}&
	\fbox{\includegraphics[width=\ImgSizeSupervised]{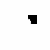}} &
	\fbox{\includegraphics[width=\ImgSizeSupervised]{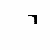}} &
	\fbox{\includegraphics[width=\ImgSizeSupervised]{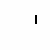}} &
	\fbox{\includegraphics[width=\ImgSizeSupervised]{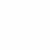}} \\
\parbox{\ImgSizeSupervised}{\centering\scriptsize $t=T$} & 
\parbox{\ImgSizeSupervised}{\centering\scriptsize $t=T+1.5$} & 
\parbox{\ImgSizeSupervised}{\centering\scriptsize $t=T+3$} &
\parbox{\ImgSizeSupervised}{\centering\scriptsize $t=T+4$} &
\parbox{\ImgSizeSupervised}{\centering\scriptsize $t=T+5$}
    \end{tabularx}
\end{center}
\caption{\textbf{Parameter Learning vs.~Optimal Control.} The plots show label assignments by computing the assignment flow \textit{beyond} the final point of time $T$ used during training, for the experiments corresponding to Figures \ref{fig:fern-labeling-results} and \ref{fig:moving-mass-labeling-results}. Unlike the pattern completion experiment (top row) where few locations of initial \textcolor{black}{data formed a subset of the target pattern}, the target pattern (bottom row, at time $T$) of the moving-mass experiment is \textit{unsteady} in the following sense: at time $T$, the flow continues to transport mass which eventually erases the target pattern with assignments of the white background label. 
The reason is that \textit{constant}
parameters are only learned that not only encode the `knowledge' how to steer the flow to the target pattern but also the time period $[0,T]$ for accomplishing this task. In order to remedy this limitation, weight \textit{functions} depending on the assignments (state of the assignment flow) would have to be estimated by applying techniques of optimal control.
}
\label{fig:exceeding-the-end-time-labeling-results}
\end{figure}

\section{Conclusion}
\label{sec:Conclusion}

We introduced a parameter learning approach for image labeling based on the assignment flow. During the training phase, weights for geometric averaging of label assignments are estimated from ground truth labelings, in order to steer the flow to prescribed labelings. Using the linearized assignment flow, we showed that, by using a class of symplectic partitioned Runge-Kutta methods, this task can be accomplished by numerically integrating the adjoint system in a consistent way. Consistent means that discretization and differentiation for the training problem \textit{commute}. An additional convenient property of our approach is that the parameter manifold has mathematically the structure of an assignment manifold, such that Riemannian gradient descent can be used for effectively solving the training problem.

The output of the training phase is a database containing features extracted from training data, together with the respective optimal weights. In order to complete the parameter learning task, a mapping has to be specified that predicts optimal weights for novel unseen data. We solved this task simply by nearest-neighbor prediction after partitioning the database using $k$-means clustering and geometric averaging of the weights, separately for each cluster. We evaluated this approach \textcolor{blue}{for a binary label scenario consisting of letters and} a 3-label scenario involving line structures where just using uniform weights inevitably fails \textcolor{blue}{in both cases}. We additionally conducted experiments that highlight the model expressiveness of the assignment flow and also limitations caused by merely learning \textit{constant} parameters.

\vspace{0.25cm}
Our main insights include the following. Regarding numerical optimization for parameter learning in connection with image labeling, our approach is more satisfying than working with discrete graphical models, where parameter learning requires to evaluate the partition function, which is a much more involved task when working with cyclic grid graphs. This latter problem of computational \textit{statistics} shows up in our scenario in similar form as the problem to design the prediction map from features to weight parameters. A key difference of these two scenarios is that by restricting the scope to statistical predictions at a \textit{local} scale, i.e.~only within small windows, the prediction task becomes \textit{manageable}, since regarding numerical optimization, no further approximations are involved at all. 

Regarding future work, we mention two directions. The natural way for broading the scope of the prediction map and the class of images that the assignment flow can represent, is the composition of two or several assignment flows in a hierarchical fashion. This puts our work closer to current mainstream research on deep networks, whose parametrizations and internal representations are not fully understood, however. We hope that using the assignment flow can help to  understand hierarchical architectures better.

The second line of research concerns the learning of weight \textit{functions}, rather than constant parameters, as motivated in Section \ref{sec:Parameter-Learning-vs-OC}, since this would also enhance model expressiveness and adaptivity considerably. A key problem then is to clarify the role of these functions and the choice of an appropriate time scale, as part of an hierarchical composition of assignment flows. 

\begin{appendix}
\label{sec:Appendix}

\section{Proofs of Section~2}\label{appendix-proofs-section2}

\subsection{Proof of Theorem~\ref{thm:sensitivity-continuous}}\label{appendix-proof-sensitivity-continuous}

A proof can be found, e.g., in \cite{cao_adjoint_2003}. However, in order to make this paper self-contained, we include a proof here.

\begin{proof}
Setting up the Lagrangian
\begin{equation}
\mc{L}(x, p, \lambda) = \mc{C}(x(T)) - \int_0^T \big\la \lambda, F(\dot x, x, p, t) \big\ra dt
\end{equation}
with multiplier $\lambda(t)$ and $F(\dot x, x, p, t) := \dot x - f(x, p, t) \equiv 0$, we get with $\Phi(p) = \mc{C}(x(T))$ from \eqref{eq:def-Phi}
\begin{equation}\label{eq:general-formula-sensitivity}
\begin{split}
\Egrad \Phi = \Egrad_p \mc{L} &= d_p x(T)^{\T} \Egrad \mc{C}\big(x(T)\big)
-
\int_0^T \Big(
d_{\dot x} F d_p\dot x + d_x F d_p x
+ d_p F
\Big)^{\T} \lambda \; dt,
\end{split}
\end{equation}
where integration applies component-wise.
By using $d_{\dot x} F = I$, where $I$ denotes the identity matrix, we partially integrate the first term under the integral,
\begin{equation}
\int_0^T d_p \dot x^{\T} \lambda \; dt = d_p x^{\T} \lambda\bigg|_{t=0}^{T} - \int_0^T d_p x^{\T} \dot\lambda \; dt.
\end{equation}
We further obtain with $d_p F = -d_p f$ and $d_x F = -d_x f$
\begin{subequations}
\begin{align}
\begin{split}
\Egrad \Phi
&= d_p x(T)^{\T} \Egrad \mc{C}(x(T)) - d_p x^{\T} \lambda \bigg|_{t=0}^{T} + \int_0^T d_p x^{\T} \dot \lambda \; dt + \int_0^T \Big(d_x f d_p x + d_p f \Big)^{\T} \lambda \; dt
\end{split} \\
\begin{split}
&= d_p x(T)^{\T} \Egrad \mc{C}(x(T)) - d_p x(T)^\T \lambda(T) + d_p x(0)^\T \lambda(0) + \int_0^T d_p x^\T \dot \lambda +  d_p x^\T d_x f^\T \lambda + d_p f^{\T} \lambda\; dt.
\end{split}
\intertext{We consider systems where the initial value $x_0$ is independent of the parameter $p$, i.e. $d_p x(0) = 0$. Additionally factoring out the unknown Jacobian $d_p x$, we obtain}
\begin{split}
&= d_p x(T)^{\T} \Big (\Egrad \mc{C}(x(T)) - \lambda(T) \Big ) + \int_0^T d_p x^{\T} \Big(\dot \lambda + d_x f^\T \lambda \Big ) + d_p f^{\T} \lambda\; dt.
\end{split}
\end{align}
\end{subequations}
Now, by choosing $\lambda(t)$ such that conditions \eqref{eq:two-point-boundary-b} are fulfilled, i.e.
\begin{equation*}
\dot \lambda(t) = - d_x f^\T \lambda(t), \qquad \lambda(T) = \Egrad_x \mc{C}(x(T)),
\end{equation*} 
we finally obtain 
\begin{equation}
  \Egrad \Phi = \int_0^T d_p f^{\T} \lambda(t) \; dt.
\end{equation}
\end{proof}

\subsection{Proof of Theorem~\ref{theorem:discrete-sensitivity}}\label{appendix:theorem-discrete-sensitivity}
For the proof of this theorem we follow the suggested outline of \cite{Sanz-Serna:2016aa}: State the Lagrangian of the nonlinear problem \eqref{eq:resulting-NLP-general} and apply the following lemma, which is a slightly different version of Lemma 3.5 in \cite{Sanz-Serna:2016aa}.

\begin{lemma}\label{lemma-gradient-lagrangian}
Suppose that the mapping $\phi \colon \R^{n_p \times d'} \to \R^{d'}$ is such that the Jacobian matrix $d_{\gamma} \phi$ is invertible at a point $(p_0, \gamma_0) \in \R^{n_p} \times \R^{d'}$, that is in the neighborhood of $p_0$, the equation $\phi(p, \gamma) = 0$ defines $\gamma$ as a function of $p$. For some given function 
$\mathcal{C} \colon \R^{n_p  \times d'} \to \R$ consider the induced function of the form $\Phi \colon \R^{n_p}  \to \R$, defined by $\Phi(p) := \mc{C}(p, \gamma(p))$.
We introduce the Lagrangian 
\begin{equation}
\mathcal{L}(p, \gamma, \lambda) = \mc{C}(p, \gamma) + \la \phi(p, \gamma), \lambda \ra.
\end{equation} 
Then, the Euclidean gradient of $\Phi$ with respect to $p$ at $p_0$ is given by
\begin{equation}\label{eq:grad-objective-grad-lagrange}
\Egrad \Phi(p_0) = \Egrad_{p} \mc{L}(p_0, \gamma_0, \lambda_0),
\end{equation}
where the vectors $\gamma_0 = \gamma(p_0) \in \R^{d'}$ and $\lambda_0 \in \R^{d'}$ are uniquely determined by
\begin{subequations}
\begin{align}
0 &= \Egrad_{\lambda} \mc{L}(p_0, \gamma_0, \lambda_0) = \phi(p_0, \gamma_0), \label{eq:Runge-Kutta-lagrangian-a}\\
\begin{split}
0 &= \Egrad_{\gamma} \mc{L}(p_0, \gamma_0, \lambda_0)\quad \Longleftrightarrow \quad \Egrad_{\gamma} \mc{C}(p_0, \gamma_0) = - d_{\gamma} \phi(p_0, \gamma_0)^\T \lambda_0. \label{eq:Runge-Kutta-lagrangian-b}
\end{split}
\end{align}
\end{subequations}
\begin{proof}
Since we evaluate all occurring functions and their derivatives at the same points $p_0$, $\gamma_0$ and $\lambda_0$, we drop them as arguments in the following, to simplify notation. 

\noindent
(i) Equation \eqref{eq:Runge-Kutta-lagrangian-a} directly follows by differentiating $\mc{L}$ with respect to $\lambda$ at $(p_0, \gamma_0, \lambda_0)$. 

\noindent
(ii) Equation \eqref{eq:Runge-Kutta-lagrangian-b} is immediately obtained by differentiating $\mc{L}$ with respect to $\gamma$ at $(p_0, \gamma_0, \lambda_0)$. 
Since $d_\gamma \phi$ is invertible at $(p_0, \gamma_0)$, the resulting linear system uniquely determines the vector $\lambda_0$. 

\noindent
(iii) Next, we show that this $\lambda_0$ also satisfies the first equation \eqref{eq:grad-objective-grad-lagrange}. By differentiating $\phi(p, \gamma) = 0$ with respect to $p$ at $(p_0, \gamma_0)$, we obtain
\begin{equation}\label{eq:implicit-function-theo}
d_{\gamma} \phi d_p \gamma_0 + d_{p} \phi = 0 \; \stackrel{d_{\gamma} \phi \;\text{is invertible}}{\Longleftrightarrow} \; d_p \gamma_0 = - (d_{\gamma} \phi)^{-1} d_{p} \phi.
\end{equation}
We will make use of this identity for $d_p \gamma_0$ in the following. Differentiating $\Phi$ with respect to $p$ at $p_0$ and by the chain rule, we obtain
\begin{subequations}
\begin{align}
\Egrad \Phi &\stackrel{\phantom{\eqref{eq:implicit-function-theo}}}{=} \Egrad_p \mc{C} + d_p\gamma_0^\T \Egrad_\gamma \mc{C} \stackrel{\eqref{eq:Runge-Kutta-lagrangian-b}}{=}\Egrad_{p} \mc{C} - d_{p} \gamma_0^\T d_{\gamma} \phi^\T \lambda_0\\
&\stackrel{{\eqref{eq:implicit-function-theo}}}{=}\Egrad_{p} \mc{C} + \big ((d_{\gamma} \phi)^{-1} d_{p} \phi \big)^\T d_{\gamma} \phi^\T \lambda_0 = \Egrad_{p} \mc{C} + d_{p} \phi^\T\lambda_0 \\
&\stackrel{\phantom{\eqref{eq:implicit-function-theo}}}{=} \Egrad \mathcal{L},
\end{align}
\end{subequations}
which shows \eqref{eq:grad-objective-grad-lagrange}. 
\end{proof}
\end{lemma}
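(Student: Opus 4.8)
The plan is to prove the three displayed relations in order, treating the first two as direct partial derivatives of $\mc{L}$ and reserving the actual work for the gradient identity $\Egrad \Phi(p_0) = \Egrad_p \mc{L}(p_0,\gamma_0,\lambda_0)$, which I would derive by combining the chain rule with the implicit function theorem in such a way that the expensive sensitivity $d_p\gamma_0$ cancels.

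First I would dispatch the characterizations of $\gamma_0$ and $\lambda_0$. Differentiating $\mc{L}(p,\gamma,\lambda) = \mc{C}(p,\gamma) + \la \phi(p,\gamma),\lambda\ra$ with respect to $\lambda$ returns $\phi(p_0,\gamma_0)$, which vanishes precisely because $\gamma_0 = \gamma(p_0)$ solves the constraint; this is equation~\eqref{eq:Runge-Kutta-lagrangian-a}. Differentiating with respect to $\gamma$ yields $\Egrad_\gamma \mc{C}(p_0,\gamma_0) + d_\gamma\phi(p_0,\gamma_0)^\T \lambda_0$, and setting it to zero is exactly the stated adjoint equation~\eqref{eq:Runge-Kutta-lagrangian-b}. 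Since $d_\gamma\phi(p_0,\gamma_0)$ is invertible by hypothesis, so is its transpose, and hence this linear system determines $\lambda_0$ uniquely; this is the first place the invertibility assumption is used.

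The main step is the gradient identity. I would start from $\Phi(p) = \mc{C}(p,\gamma(p))$ and apply the chain rule at $p_0$ to get $\Egrad \Phi(p_0) = \Egrad_p \mc{C} + (d_p\gamma_0)^\T \Egrad_\gamma \mc{C}$, where $d_p\gamma_0$ is the sensitivity of the implicitly defined solution and is exactly the object one wishes to avoid forming explicitly. To eliminate it, I would differentiate the identity $\phi(p,\gamma(p)) \equiv 0$ with respect to $p$, obtaining $d_\gamma\phi\, d_p\gamma_0 + d_p\phi = 0$, hence $d_p\gamma_0 = -(d_\gamma\phi)^{-1} d_p\phi$ (invertibility reused). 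Substituting this together with the adjoint relation $\Egrad_\gamma \mc{C} = -d_\gamma\phi^\T \lambda_0$ into the chain-rule expression, the product $(d_\gamma\phi)^{-\T} d_\gamma\phi^\T$ collapses to the identity and leaves $\Egrad \Phi(p_0) = \Egrad_p \mc{C} + d_p\phi^\T \lambda_0 = \Egrad_p \mc{L}(p_0,\gamma_0,\lambda_0)$, as claimed.

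The crux here is conceptual rather than technical: the potentially large sensitivity $d_p\gamma_0$ enters only linearly and can be purged by solving the single adjoint system for $\lambda_0$, which is the entire point of introducing the multiplier. The only genuine care needed is the transpose bookkeeping, so that $(d_\gamma\phi)^{-1}$ and $d_\gamma\phi^\T$ pair up to cancel. I would also note that the very existence and differentiability of $\gamma(\cdot)$ near $p_0$, which the chain rule presupposes, is guaranteed by the implicit function theorem under the same invertibility hypothesis, so no additional regularity is required.
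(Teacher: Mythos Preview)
Your proposal is correct and follows essentially the same route as the paper: verify the two stationarity conditions by direct differentiation of $\mc{L}$, then combine the chain rule for $\Phi$ with the implicit-function relation $d_p\gamma_0 = -(d_\gamma\phi)^{-1} d_p\phi$ and the adjoint equation $\Egrad_\gamma\mc{C} = -d_\gamma\phi^\T\lambda_0$ so that the sensitivity cancels and only $\Egrad_p\mc{C} + d_p\phi^\T\lambda_0 = \Egrad_p\mc{L}$ remains. Your added remark that the implicit function theorem also supplies the differentiability of $\gamma(\cdot)$ is a welcome clarification not spelled out in the paper.
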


\begin{proof}[Proof of Theorem~\ref{theorem:discrete-sensitivity}]
We begin by stating the Lagrangian of problem \eqref{eq:resulting-NLP-general}
\begin{align}\label{eq:def-RK-Lagrangian}
\mc{L}(x,p, \lambda) &= \mc{C}\big( x_N \big ) - \lambda_0^\T (x_0 - x(0))- \sum_{n=0}^{N-1} \lambda_{n+1}^\T \Big [x_{n+1} - x_n - h_n\sum_{i=1}^s b_i k_{n,i}\Big ]\\
& \quad - \sum_{n=0}^{N-1} h_n \sum_{i=1}^s b_i \Lambda_{n,i}^\T \Big [ k_{n,i} - f(X_{n,i}, p, t_n + c_i h_n)\Big ].\nonumber
\end{align}
In order to apply Lemma \ref{lemma-gradient-lagrangian}, we explain which role the variables $\gamma, \lambda, \phi$
 play in this situation:

\begin{enumerate}
\item \textit{Intermediate stages:} The vector $\gamma$ represents all intermediate stages related to the evaluation of the function $\Phi(p) = \mc{C}(x_N(p))$, i.e. all intermediate values $x_i$ and 
stages $k_i$ of the Runge--Kutta method. These variables are stacked and arranged as follows
\begin{equation}\label{eq:intermediate-stages}
\begin{split}
&\mathbf{\gamma} = \begin{bmatrix}
x_0\\
\gamma_0\\
\gamma_1\\
\vdots\\
\gamma_{N-1}
\end{bmatrix} \in \R^{d'}, \quad \gamma_n = \begin{bmatrix}
{k}_{n} \\ x_{n+1}
\end{bmatrix} \in \R^{(s+1)n_x}, \quad \text{and} \quad {k}_{n} = \begin{bmatrix}
k_{n,1} \\ \vdots \\ k_{n,s}
\end{bmatrix}\in \R^{sn_x}. 
\end{split}
\end{equation}

\item \textit{Lagrange multiplier:} The vector $\lambda$ contains all Lagrange multipliers in \eqref{eq:def-RK-Lagrangian}
 belonging to the constraints \eqref{eq:NLP-general-a}-\eqref{eq:NLP-general-c}. The multipliers are stacked and arranged as follows
\begin{equation}\label{eq:def-multiplier}
\lambda = \begin{bmatrix}
-\lambda_0\\
-\Lambda_0\\
\vdots\\
-\lambda_{N-1}\\
-\Lambda_{N-1}\\
-\lambda_{N}
\end{bmatrix} \in \R^{d'},\quad\Lambda_n = \begin{bmatrix}
h_n b_1\Lambda_{n,1}\\
\vdots\\
h_n b_s\Lambda_{n,s}
\end{bmatrix} \in \R^{sn_x}.
\end{equation}

\item \textit{Intermediate mappings:} Analogously, the vector $\phi$ contains all intermediate 
mappings $\phi_n$ of the computation of $\Phi(p) = \mc{C}(x_N(p))$ with $n = 1, \dots, N-1$. In our situation, $\phi$ is the concatenation of the \textit{forward} 
Runge--Kutta evaluation, which we express using the Kronecker-product as
\begin{equation}\label{eq:intermediate-mappings}
\phi = \begin{bmatrix}
x_0 - x(0)\\
{\Psi}_1 \\
{\Psi}_2 \\
\vdots\\
{\Psi}_{N-1}
\end{bmatrix}\in \R^{d'},\quad
{\Psi}_n = \begin{bmatrix}
k_n - F_n(X_n, p)\\
x_{n+1} - x_n-h_n (b^{\T} \otimes I_{n_x}) k_n
\end{bmatrix}= \begin{bmatrix}
\Psi_{n,1} \\
\Psi_{n,2}
\end{bmatrix}\in \R^{(s+1)n_x}, 
\end{equation}
where $\Psi_{n,1} \in \R^{sn_x}$ and $\Psi_{n,2} \in \R^{n_x}$, as well as
\begin{equation}
\begin{split}
&F_n(X_n, p) = \begin{bmatrix}
f(X_{n,1}, p, t_n + c_1 h_n)\\ \vdots \\f(X_{n,s}, p, t_n + c_s h_n)
\end{bmatrix}, \quad X_n =\eins_s \otimes x_n+h_n(A \otimes I_{n_x}) k_n = \begin{bmatrix} X_{n,1}\\ \vdots\\X_{n,s}\end{bmatrix}.
\end{split}
\end{equation}
\end{enumerate} 
We proceed by computing the Jacobian $d_{\gamma} \phi$. Note that the intermediate variables $\gamma_n$ \eqref{eq:intermediate-stages} are only contained in the intermediate 
mappings $\Psi_n$ \eqref{eq:intermediate-mappings}, which results in a sparse block structure of the overall Jacobian $d_{\gamma} \phi$.
 
 \begin{enumerate}
\item \textit{Small block matrices:} Each small block matrix represents the derivative of the $n$-th iteration step $\Psi_n$ and is given by 
\begin{equation}\label{eq:local-Jacobian}
\begin{split}
d_{(x_n, k_n, x_{n+1})} \Psi_n &=
\begin{bmatrix}
d_{x_n}\Psi_{n,1} & d_{k_n}\Psi_{n,1} & d_{x_{n+1}}\Psi_{n,1}\\
d_{x_n} \Psi_{n,2}& d_{k_n}\Psi_{n,2} & d_{x_{n+1}}\Psi_{n,2}
\end{bmatrix}=
\begin{bmatrix}
{D}_n & {A}_n & \\
- {I}_{n_x} & {B}_n^\T & {I}_{n_x}
\end{bmatrix},
\end{split}
\end{equation}
with
\begin{subequations}
\begin{align}
{A}_n &= {I}_{sn_x} - h_n d_x F_n(X_n, p) ({A}\otimes I_{n_x}),\label{eq:simplified-notation-a}\\
{B}_n^\T &= - h_n {b}^\T \otimes {I}_{n_x}, \\
{D}_n &= -d_x F_n(X_n, p) (\eins_s \otimes I_{n_x}),\label{eq:simplified-notation-b}
\end{align}
\end{subequations}
where ${A}$ and ${b}$ are the Runge-Kutta coefficients given by the above tableau of Fig.~\ref{fig:butcher}. 

\item \textit{Sparse block structure:} The overall Jacobian $d_{\gamma} \phi$ consists of $N-1$ blocks (one for each iteration) of the form \eqref{eq:local-Jacobian} and is given by
\begin{equation}\label{eq:overall-Jacobian}
d_{\gamma} {\phi} = \begin{bmatrix}
{I}_{n_x}&&&&&\\
{D}_1&{A}_1 &&&&\\
- {I}_{n_x}&{B}_1^\T &{I}_{n_x} & &&& & \\
&&{D}_2 & {A}_2 & &&&\\
&&- {I}_{n_x} & {B}_2^\T & {I}_{n_x} &&&\\
&&&\ddots&\ddots&\ddots&&\\
&&&&&{D}_{N-1} & {A}_{N-1} & \\
&&&&&-{I}_{n_x} & {B}_{N-1}^\T & {I}_{n_x}
\end{bmatrix}.
\end{equation}

\item \textit{Invertibility of $d_{\gamma} {\phi}$:} A matrix $M$ is invertible if $\det M \ne 0$. Since the matrix $d_{\gamma} {\phi}$ is lower block diagonal, its determinant is 
given by
\begin{equation}
\det d_{\gamma} {\phi} = \det A_1 \cdot \ldots \cdot \det A_{N-1}
\end{equation}
Thus, we only need to show that $\det A_n \ne 0$ for all $n = 1, \dots, N-1$. Equation \eqref{eq:simplified-notation-a} reads in a more compact form
\begin{equation}\label{eq:def-M}
{A}_n = ({I}_{sn_x} - h_n M), \text{ with } M:= d_x F_n(X_n, p) ({A}\otimes I_{n_x}).
\end{equation}
We show $\det A_n \ne 0$ by using the equivalent statement  $\ker(A_n) = \{0\}$.
Now, let $x \in \R ^{sn_x} \backslash\{0\}$, then
\begin{equation}\label{eq:trivial-kern}
\|x\| \leq\|x-h_n M x\|+\|h_n M x\| \leq\|A_n x\|+\|h_n M\|\|x\|.
\end{equation}
By using the row-sum norm $\|\cdot \|_\infty$, we have 
\begin{align}
\|h_n M\|_\infty &\stackrel{\eqref{eq:def-M}}{=} h_n\| d_x F_n(X_n, p) ({A}\otimes I_{n_x})\|_\infty\stackrel{\phantom{\eqref{eq:def-M}}}{\le} h_n\| d_x F_n(X_n, p) \|_\infty  \|({A}\otimes I_{n_x})\|_\infty \nonumber\\
&\stackrel{\phantom{\eqref{eq:def-M}}}{<} h_n L \max_{i=1, \dots, s}\sum_{j=1}^s |a_{ij}| \stackrel{\eqref{eq:step-size-condition}}{<} 1,\label{eq:inequality}
\end{align}
where $L$ denotes the Lipschitz constant of $f$ and the step-size $h_n$ satisfies the assumption \eqref{eq:step-size-condition}. Substituting \eqref{eq:inequality} into \eqref{eq:trivial-kern} gives
\begin{equation}
\|x\| < \|A_n x\|+\|x\| iff 0 < \|A_n x\| \iff x \not \in \ker(A_n).
\end{equation}
Since, the kernel of $A_n$ is trivial, $A_n$ is invertible and consequently the overall Jacobian $d_\gamma \phi$ as well.  
\end{enumerate}
Now we are in a position to apply Lemma~\ref{lemma-gradient-lagrangian}. More precisely, \eqref{eq:Runge-Kutta-lagrangian-b} tells us that the vector $\lambda$ is uniquely determined by the linear system $d_{\gamma} \phi^\T \lambda = - \Egrad_{\gamma} \mc{C}$.
In our situation, this system is given by
\begin{align}
&{\begin{bmatrixT}
{I}_{n_x}&&&&&\\
{D}_1^\T&{A}_1^\T &&&&\\
- {I}_{n_x}&{B}_1 &{I}_{n_x} & &&& & \\
&&{D}_2^\T & {A}_2^\T & &&&\\
&&- {I}_{n_x} & {B}_2 & {I}_{n_x} &&&\\
&&&\ddots&\ddots&\ddots&&\\
&&&&&{D}_{N-1}^\T & {A}_{N-1}^\T & \\
&&&&&-{I}_{n_x} & {B}_{N-1} & {I}_{n_x}
\end{bmatrixT}} \begin{bmatrix}
-\lambda_0\\
-\Lambda_0\\
-\lambda_1\\
-\Lambda_1\\
\vdots\\
-\lambda_{N-1}\\
-\Lambda_{N-1}\\
-\lambda_{N}
\end{bmatrix}
&= -\begin{bmatrix} 0\\ 0\\ 0\\ 0\\ \vdots \\ 0 \\ 0 \\ \Egrad_x \mc{C}(x_N)
\end{bmatrix}.\label{eq:linear-system-lambda}
\end{align}
We determine the exact identity of $\lambda$ by backward substitution:\\
\noindent
1. From the last row of \eqref{eq:linear-system-lambda}, we immediately obtain 
\begin{equation}
\lambda_{N} = \Egrad_x \mc{C}(x_N).
\end{equation}

\noindent
2. Next, we prove equation \eqref{eq:discrete-adjoints-RK-a}. For each $n = 0, \dots, N-1$, we obtain
\begin{align}
0 &= 
\begin{bmatrix}
 {I}_{n_x} & {D}_n^\T &  -{I}_{n_x}
\end{bmatrix}
\begin{bmatrix}
\lambda_n\\
\Lambda_n\\
\lambda_{n+1}
\end{bmatrix} = \lambda_n + {D}_n^\T \Lambda_n - \lambda_{n+1}\nonumber\\
&= \lambda_n - (d_x F_n(X_n, p) (\eins_s \otimes I_{{n_x}}))^\T \begin{bmatrix}
h_n b_1\Lambda_{n,1}\\
\vdots\\
h_n b_s\Lambda_{n,s}
\end{bmatrix}- \lambda_{n+1}\nonumber\\
&= \lambda_n - (\eins_s^\T \otimes I_{n_x}) d_x F_n(X_n, p)^\T \begin{bmatrix}
h_n b_1\Lambda_{n,1}\\
\vdots\\
h_n b_s\Lambda_{n,s}
\end{bmatrix} - \lambda_{n+1}\nonumber\\
&= \lambda_n - h_n \sum_{i=1}^s b_i d_x f(X_{n,i}, p, t_n + c_i h_n)^\T \Lambda_{n,i}- \lambda_{n+1}\nonumber.\\
&\lambda_{n+1} = \lambda_n + h_n \sum_{i=1}^s b_i \ell_{n,i}, \label{eq:discrete-adjoint-step-proof}
\end{align}
with $\ell_{n,i} = - d_x f(X_{n,i}, p, t_n + c_i h_n)^\T \Lambda_{n,i}$.\\

\noindent
3. The last equation \eqref{eq:discrete-adjoints-RK-c} follows by
\begin{align*}
0 &= \begin{bmatrix} 0 & {A}_n^\T & {B}_n \end{bmatrix} \begin{bmatrix}\lambda_n\\ \Lambda_n\\ \lambda_{n+1} \end{bmatrix}\\
&= {A}_n^\T \Lambda_n + {B}_n \lambda_{n+1}\\
&= \left ( {I}_{s{n_x}} - h_n d_x F_n(X_n, p) ({A}\otimes I_{n_x})\right )^\T \Lambda_n - h_n ({b} \otimes {I}_{n_x} )\lambda_{n+1}\\
&= \left ( {I}_{s{n_x}} - h_n (A^\T \otimes I_{n_x})d_x F_n(X_n, p)^\T \right ) \Lambda_n - h_n (b \otimes {I}_{n_x})\lambda_{n+1}.
\end{align*}
In the following, we consider the $i$-th entry of the previous equation, i.e. $h_nb_i\Lambda_{n,i}$ of $\Lambda_n$ with $i = 1, \dots, s$.
\begin{align*}
&0 = h_n b_i \Lambda_{n,i} - h_n^2 \sum_{j=1}^s a_{ji}b_j \partial_x f(X_{n,j}, p, t_n + c_j h_n)^\T \Lambda_{n,j} - h_n b_i \lambda_{n+1}\\
&\Lambda_{n,i} = \lambda_{n+1} + h_n \sum_{j=1}^s \frac{a_{ji}b_j}{b_i} d_x f(X_{n,j}, p, t_n + c_j h_n)^\T \Lambda_{n,j}\\
 &\stackrel{\eqref{eq:discrete-adjoint-step-proof}}{=} \lambda_n + h_n \sum_{i=1}^s b_i \ell_{n,i} - h_n \sum_{j=1}^s \frac{a_{ji}b_j}{b_i}\ell_{n,j}\\
 &\stackrel{\phantom{\eqref{eq:discrete-adjoint-step-proof}}}{=} \lambda_n + h_n \sum_{j=1}^s \left(b_j -\frac{a_{ji}b_j}{b_i}\right)\ell_{n,j},
\end{align*}
with $\ell_{n,j} = - d_x f(X_{n,j}, p, t_n + c_j h_n)^\T \Lambda_{n,j}$.

\vspace{2mm}
Finally, we show the formula of the gradient \eqref{eq:discrete-sensitivity-parameter}, which is given by \eqref{eq:grad-objective-grad-lagrange}
\begin{equation}
 \Egrad \Phi = \Egrad_{p} \mc{C} + d_{p} \phi^\T \lambda_0 \stackrel{\Egrad_{p} \mc{C}=0}{=} d_{p} \phi^\T \lambda_0.
\end{equation}
The Jacobian $d_{p} \phi^\T$ consists of the following building blocks: For the $n$-th iteration step $\Psi_n$ the \textit{local} Jacobian with respect to parameter $p$ reads
\begin{equation}\label{eq:local-Jacobian-wrt-parameter}
d_p \Psi_n =
\begin{bmatrix}
d_p \Psi_{n,1} \\
d_p \Psi_{n,2}
\end{bmatrix}=
\begin{bmatrix}
{\bar D}_n \\
0
\end{bmatrix},\quad 
{\bar D}_n = - d_p F_n(X_n, p) (\eins_s \otimes I_{n_p}).
\end{equation}
By concatenating $N-1$ of these blocks (one for each iteration $n = 1, \dots, N-1$) of \eqref{eq:local-Jacobian-wrt-parameter}, the overall Jacobian is given by
\begin{equation}
d_{p} \phi^\T =
\begin{bmatrix}
0&
{\bar D}_0^\T &
0&
{\bar D}_1^\T &
0&
\ldots &
0&
{\bar D}_{N-1}^\T&
0
\end{bmatrix}.
\end{equation}
Now, formula \eqref{eq:discrete-sensitivity-parameter} is explicitly given by
 \begin{align*}
 \Egrad \Phi &\stackrel{\phantom{\eqref{eq:def-multiplier}}}{=} d_{p} \phi^\T \lambda_0\\
 &= \begin{bmatrix}
0&
{\bar D}_0^\T &
0&
\dots &
0&
{\bar D}_{N-1}^\T &0
\end{bmatrix}\begin{bmatrix}
-\lambda_0\\
-\Lambda_0\\
-\lambda_1\\
\vdots\\
-\lambda_{N-1}\\
-\Lambda_{N-1}\\
-\lambda_{N}
\end{bmatrix}\stackrel{\phantom{\eqref{eq:def-multiplier}}}{=} -\sum_{n=0}^{N-1} {\bar D}_n^\T  \Lambda_{n}\\ 
&\stackrel{\phantom{\eqref{eq:def-multiplier}}}{=} \sum_{n=0}^{N-1} (d_p F_n(X_n, p) (\eins_s \otimes I_{n_p}))^\T  \Lambda_{n}\stackrel{\phantom{\eqref{eq:def-multiplier}}}{=} \sum_{n=0}^{N-1} ((\eins_s \otimes I_{n_p})) d_p F_n(X_n, p)^\T \Lambda_{n} \\ 
&\stackrel{\eqref{eq:def-multiplier}}{=} \sum_{n=0}^{N-1} h_{n} \sum_{i=1}^{s} b_i d_p f(X_{n,i}, p, t_n +c_ih_n) ^\T \Lambda_{n,i}.
\end{align*}
\end{proof}
\end{appendix}

\bibliographystyle{amsalpha}
\bibliography{Parameter-Estimation}

\end{document}